\theoremstyle{plain}
\newtheorem{theorem}{Theorem}[section]
\newtheorem{lemma}[theorem]{Lemma}
\newtheorem{corollary}[theorem]{Corollary}
\newtheorem{proposition}[theorem]{Proposition}
\newtheorem{conjecture}{Conjecture}
\theoremstyle{definition}
\newtheorem{definition}[theorem]{Definition}
\newcommand{\R}{\mathbb R}
\newcommand{\f}{f}
\DeclareMathOperator{\dist}{dist}
\tikzstyle{every node}=[circle, fill=black, inner sep=1pt]
\title{New conjectures on algebraic connectivity and the Laplacian spread of graphs}
\author{Wayne Barrett\footnote{Department of Mathematics, Brigham Young University, Provo, UT, wb@mathematics.byu.edu}, Emily Evans\footnote{Department of Mathematics, Brigham Young University, Provo, UT, ejevans@mathematics.byu.edu}, H.~Tracy Hall\footnote{Hall Labs, LLC, Provo, UT, h.tracy@gmail.com}, and Mark Kempton\footnote{Department of Mathematics, Brigham Young University, Provo, UT, mkempton@mathematics.byu.edu}}
\date{}
\begin{document}

\maketitle

\begin{abstract}
    We conjecture a new lower bound on the algebraic connectivity of a graph that involves the number of vertices of high eccentricity in a graph. We prove that this lower bound implies a strengthening of the Laplacian Spread Conjecture.  We discuss further conjectures, also strengthening the Laplacian Spread Conjecture, that include a conjecture for simple graphs and a conjecture for weighted graphs.
    % Furthermore, we discuss related conjectures that strengthen and generalize the Laplacian Spread Conjecture.  These include a conjecture for simple graphs and a conjecture for weighted graphs.
\end{abstract}

\section{Introduction}

The algebraic connectivity of a graph is one of the most well-studied parameters in spectral graph theory.  It is defined as the second smallest eigenvalue of the combinatorial Laplacian matrix of a graph.  That is, if the Laplacian matrix $L$ of a graph $G$ has eigenvalues $0=\lambda_1(G)\leq\lambda_2(G)\leq\cdots\leq\lambda_n(G)$, the algebraic connectivity is $\lambda_2(G)$.  It is well known that this eigenvalue gives a measure for how well-connected a graph is.  Famously, $\lambda_2(G) > 0$ if and only if $G$ is connected.  Good lower bounds on $\lambda_2(G)$ have been difficult to obtain.  For example in Tables 4.1 and 4.2 of~\cite{OLDANDNEW} we find twelve upper bounds for $\lambda_2(G)$ but only four lower bounds.  Moreover, these lower bounds are typically far from sharp.

 For a graph $G$ on $n$ vertices with Laplacian eigenvalues $0=\lambda_1(G)\leq\lambda_2(G)\leq\cdots\leq\lambda_n(G)$, the Laplacian Spread Conjecture states that
 \begin{equation}
     \lambda_n(G)-\lambda_2(G) \leq n-1.
 \end{equation}
 Because of the straightforward relationship that $\lambda_i(G)+\lambda_{n+2-i}(G^c) = n \text{ for } i=2,...,n,$ where $G^c$ denotes the complement of $G$,
 this conjecture can be reformulated as stating
 \begin{equation}
     \lambda_n(G)+\lambda_n(G^c) \leq 2n-1
 \end{equation}
 or equivalently
 \begin{equation}\label{eq:lam2}
     \lambda_2(G) + \lambda_2(G^c) \geq 1.
 \end{equation}
 
 In light of the connection between $\lambda_2$ and how well-connected the graph is, the symmetric formulation of the Laplacian spread conjecture in (\ref{eq:lam2}) can be interpreted as stating that a graph and its complement cannot both be very poorly connected. %It is the perspective in this formulation that will motivate our present approach.
 
  The Laplacian Spread Conjecture was recently resolved in \cite{einollahzadeh2021proof} via an ingenious argument giving very technical lower bounds on $\lambda_2(G)$ and $\lambda_2(G^c)$ simultaneously, thus implying (\ref{eq:lam2}).  The purpose of the present work is to conjecture a more simple, intuitive lower bound on the algebraic connectivity, which we now state.
 
% \section{The Conjecture}
 
 Define \[D(G) = \{v\in V(G) : \text{there is some }x \in V(G) \text{ with } d(x,v)\geq3\}. \]  Recall the \emph{eccentricity} of a vertex $v$ of a graph is $\max\{d(v,x):x\in V(G)\}$.  Thus $D(G)$ consists of all the vertices of eccentricity at least 3.

 \begin{conjecture}\label{conj:bound}
 For a graph $G$ on $n$ vertices with $k=|D(G)|/2$, we have 
 \begin{equation}\label{eq:conj_bound}
     \lambda_2(G) \geq \frac{n-k+1-\sqrt{(n-k+1)^2-4(n-2k)}}{2}.
 \end{equation}
 \end{conjecture}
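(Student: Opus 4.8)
The plan is to prove the bound in the equivalent form $p(\lambda_2(G)) \le 0$, where $p(t) = t^2 - (n-k+1)t + (n-2k)$ is the monic quadratic with roots $\mu_- \le \mu_+$; its smaller root $\mu_-$ is exactly the right-hand side of (\ref{eq:conj_bound}), and since $p$ opens upward, $p(\lambda_2(G)) \le 0$ places $\lambda_2(G)$ between the two roots and in particular forces $\mu_- \le \lambda_2(G)$, which is the claim. Two elementary facts organize the argument. Writing $c = n - 2k$ for the number of vertices of eccentricity at most $2$, the target factors as
\[
(\lambda_2(G) - 1)\,(\lambda_2(G) - c) \le k\,\lambda_2(G),
\]
and moreover $p(1) = 1 - (n-k+1) + (n-2k) = -k \le 0$, so $\mu_- \le 1$ always, with equality exactly when $k = 0$. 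Hence whenever $\lambda_2(G) \ge 1$ the bound holds trivially, and we may assume $\lambda_2(G) < 1$; in that regime establishing (\ref{eq:conj_bound}) is the same as establishing the displayed factored inequality.

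The anchor is the case $k = 0$, that is $\mathrm{diam}(G) \le 2$, where the claim reduces to $\lambda_2(G) \ge 1$, and I would prove this directly from the eigenvalue equation. Let $\f$ be a Fiedler eigenvector, $L\f = \lambda_2(G)\f$ with $\sum_v \f_v = 0$, and let $M = \f_p$ be its maximum and $m = \f_q$ its minimum, so $M > 0 > m$. Reading $\lambda_2(G)\f_v = \sum_{u \sim v}(\f_v - \f_u)$ at $p$ and at $q$ gives $\lambda_2(G)\,M = \sum_{u \sim p}(M - \f_u)$ and $-\lambda_2(G)\,m = \sum_{u \sim q}(\f_u - m)$, where every summand is nonnegative. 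If $p \sim q$, retaining only the mutual term in each sum yields $\lambda_2(G)(M - m) \ge 2(M - m)$; otherwise $\mathrm{diam}(G) \le 2$ supplies a common neighbor $w$ of $p$ and $q$, and retaining only the term for $w$ in each sum yields $\lambda_2(G)(M - m) \ge (M - \f_w) + (\f_w - m) = M - m$. Either way $\lambda_2(G) \ge 1$.

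For general $k$ the natural strategy is to push this extremal-vertex argument, using the eccentricity hypothesis precisely where the base case used $\mathrm{diam}(G) \le 2$: the definition of $D(G)$ guarantees that any two vertices outside $D(G)$ are joined by a path of length at most $2$. If the Fiedler-extreme vertices $p, q$ satisfy $d(p,q) \le 2$, the argument above already gives $\lambda_2(G) \ge 1 \ge \mu_-$; so the entire difficulty is concentrated in the case $d(p,q) \ge 3$, which forces $p, q \in D(G)$. Here one would route along a shortest $p$–$q$ path and combine this with the identity $\sum_{\text{pairs}}(\f_u - \f_v)^2 = n\|\f\|^2$ (valid since $\f \perp \mathbf 1$), which for $\|\f\| = 1$ splits into the edge part $\sum_{uv \in E}(\f_u - \f_v)^2 = \lambda_2(G)$ and the non-edge part $\sum_{uv \notin E}(\f_u - \f_v)^2 = n - \lambda_2(G)$; one then charges the non-edge contributions against edge contributions through length-$2$ detours for every non-edge meeting the low-eccentricity set $C(G)$, isolating the $2k$ vertices of $D(G)$ to produce the correction terms $-k$ and $-2k$ in $p(t)$.

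The main obstacle is that the straightforward versions of this charging are far too lossy to reach the sharp quadratic. A shortest-path Cauchy–Schwarz estimate yields only $(M - m)^2 \le d(p,q)\,\lambda_2(G)$, and routing each length-$2$ non-edge through a common neighbor incurs multiplicities as large as the degrees; neither is compatible with the tightness of (\ref{eq:conj_bound}), whose extremal graphs leave essentially no slack. What is really needed is quantitative control of the Fiedler vector on $D(G)$, showing that each additional high-eccentricity vertex lets $\lambda_2(G)$ fall below $1$ by only the precise amount encoded in $p(t)$; equivalently, one seeks a positive-semidefinite certificate for $L - \mu_-\bigl(I - \tfrac1n J\bigr)$ assembled from the distance structure. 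Producing such a certificate, rather than a lossy inequality, is the step I expect to be hard, and is presumably why the statement is offered as a conjecture. A parallel line of attack worth pursuing is the complement formulation $\lambda_n(G^c) \le n - \mu_-$, which recasts the problem as an upper bound on the largest Laplacian eigenvalue of $G^c$ in terms of the same high-eccentricity count.
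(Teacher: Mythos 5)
There is no paper proof to compare against: the statement is Conjecture \ref{conj:bound}, which the paper explicitly leaves open and supports only by verifying it on special families. The portions of your proposal that are actually carried out are correct. The reduction is sound: writing $p(t)=t^2-(n-k+1)t+(n-2k)=(t-1)\bigl(t-(n-2k)\bigr)-kt$ and noting $p(1)=-k\le 0$ correctly places the smaller root $\mu_-$ at most $1$, and your extremal-entry argument correctly yields $\lambda_2(G)\ge 1$ whenever the maximal and minimal Fiedler entries $p,q$ satisfy $d(p,q)\le 2$ (your adjacency case in fact gives $\lambda_2(G)\ge 2$); this recovers the paper's Proposition \ref{prop:diam2}, which the paper attributes to known results on diameter-$2$ graphs. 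But everything beyond that is a plan rather than a proof, as you yourself acknowledge: for $d(p,q)\ge 3$ you propose charging the non-edge part of $\sum_{u<v}(f_u-f_v)^2=n\|f\|^2$ against the edge part via length-$2$ detours through the low-eccentricity vertices, observe that the naive implementations (Cauchy--Schwarz along a shortest path, detour routing with degree multiplicities) are too lossy, and defer the construction of a positive-semidefinite certificate for $L-\mu_-\bigl(I-\tfrac1nJ\bigr)$. That missing certificate is the entire content of the conjecture. As written, your argument establishes the bound only in the regimes the paper already disposes of trivially: $k=0$ (diameter at most $2$) and $k=n/2$ (where $\mu_-=0$, covering diameter $\ge 5$); no graph with $0<k<n/2$ is handled.

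Two calibration points from the paper are worth building into any future attempt. First, Theorem \ref{thm:Gfamily} shows that equality in (\ref{eq:conj_bound}) holds exactly for $G(n-2k-2,k,k)$, computed via the equitable partition of Lemma \ref{lem:charpolyG_rij}; consequently any certificate for $L-\mu_-\bigl(I-\tfrac1nJ\bigr)\succeq 0$ must be singular on these graphs, with kernel containing their Fiedler vector, which is constant on the five partition classes. This pins down precisely where your charging must be tight and confirms your ``no slack'' diagnosis; by contrast, on the diameter-$4$ extremal family $\widehat G(r,i,j)$ of Theorem \ref{thm:Ghatfamily} the conjectured bound is strict, so the certificate need not degenerate there. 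Second, the paper itself proposes (Section \ref{sec:weak}) the strictly weaker rational bound (\ref{eq:weak_bound}), $\lambda_2(G)\ge 1-\frac{k+1}{n-k+1}$, exactly on the grounds that it may be more tractable; since it sits strictly below $f_n(k)$ for $k<n/2$, a moderately lossy version of your detour-charging has genuine room to reach it, and proving even that weaker inequality would already be a substantive advance over what either your proposal or the paper establishes.
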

 
Our motivation for considering this expression can be seen in Theorem \ref{thm:Gfamily} below and the remarks following its proof.
 
For ease of notation, let us define 
\[
\f_n(x) = \frac{n-x+1-\sqrt{(n-x+1)^2-4(n-2x)}}{2},
\] 
so that (\ref{eq:conj_bound}) can be restated as 
\[
\lambda_2(G)\geq \f_n\left(\frac{|D(G)|}{2}\right).
\]

We note that $\f_n(x)$ is real and nonnegative for $x$ in the relevant domain ($0\leq x \leq n/2$).  Note also $\f_n(x)$ is monotonically decreasing in $x$ (see Lemma \ref{lem:decr} in the appendix).  Thus, a large number of vertices with eccentricity 3 or more will correspond to a smaller bound on $\lambda_2$.  This matches the intuition that $\lambda_2$ is giving a measure for how well-connected a graph is.  Graphs with many vertices that are far from each other are more poorly connected, while graphs in which most vertices are close to each other are well connected.

One advantage of a bound involving $|D(G)|$ is that it is straightforward to prove that $|D(G)|$ and $|D(G^c)|$ cannot both be large (see Proposition \ref{prop:comp} below).  Thus our conjectured bound becomes relevant to questions concerning the algebraic connectivity of both a graph and its complement.

We will prove that Conjecture \ref{conj:bound}, if true, would give an alternate proof of the Laplacian spread conjecture.  It has been known for some time that the spread conjecture is true for graphs of diameter other than $3$.  %It is also known that equality in the spread conjecture never occurs for graphs whose diameter is $3$.  
We will discuss how Conjecture \ref{conj:bound}, in fact, gives a stronger bound on the Laplacian spread for graphs of diameter $3$.%, and characterize which diameter $3$ graphs achieve their lowest possible spread.

Closely related to the Laplacian Spread Conjecture is the question of lower bounding the quantity $\max\{\lambda_2(G),\lambda_2(G^c)\}$.  In \cite{afshari2018algebraic} it is shown that
\begin{equation}
    \max\{\lambda_2(G),\lambda_2(G^c)\} \geq \frac25
\end{equation}
for any graph.  They further point out that the self-complementary graph $P_4$ (the path on $4$ vertices) has $\lambda_2=2-\sqrt2\approx .5858$, so that no absolute lower bound greater than $2-\sqrt2$ could be hoped for to bound $\max\{\lambda_2(G),\lambda_2(G^c)\}$.  In addition, \cite{einollahzadeh2021proof} obtains the asymptotic result
\[
\max\{\lambda_2(G),\lambda_2(G^c)\} \geq 1-O(n^{-1/3}).
\]  In this paper, we will show that Conjecture \ref{conj:bound} implies that
\begin{equation}\label{eq:2rt2}
    \max\{\lambda_2(G),\lambda_2(G^c)\} \geq 2-\sqrt2
\end{equation}
so our conjecture would resolve the question of the optimal lower bound on $\max\{\lambda_2(G),\lambda_2(G^c)\}$.

%Finally, we will prove Conjecture \ref{conj:bound} for particular families of graphs.  This will lead us to a characterization of graphs which optimize the Laplacian spread over all graphs of diameter 3.

The remainder of this paper is organized as follows.  In Section \ref{sec:spread} we will show that Conjecture \ref{conj:bound} implies (\ref{eq:lam2}) and (\ref{eq:2rt2}) above.  We will also see how it can strengthen the Laplacian Spread Conjecture, and compare its bound to other known bounds on $\lambda_2$.  In Section \ref{sec:motivation}, we will prove the conjecture is true for several different families of graphs.  These families will include all graphs of diameter other than 3 and 4, all trees, and some families of graphs with diameter 3 and 4 that have particularly small $\lambda_2$.  These families will also explain the motivation for Conjecture \ref{conj:bound}.  Finally, in Section \ref{sec:conjs}, we will explore some further related conjectures, including a conjecture analogous to the Laplacian Spread Conjecture for weighted graphs, and a conjecture on the optimal spread of all diameter 3 graphs.

%FILL IN: REDUCTION TO DIAMETER 3

\section{Relation to the Laplacian spread and $\max\{\lambda_2(G),\lambda_2(G^c)\}$}\label{sec:spread}

 \begin{proposition}\label{prop:comp}
 If $G$ is a graph on $n$ vertices, then
\[|D(G)|+|D(G^c)| \leq n.\]
 \end{proposition}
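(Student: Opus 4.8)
The plan is to prove the stronger assertion that $D(G)$ and $D(G^c)$ are \emph{disjoint}. Since both are subsets of the same vertex set $V(G)$, disjointness immediately yields $|D(G)| + |D(G^c)| = |D(G)\cup D(G^c)| \le |V(G)| = n$, which is exactly the claimed bound. So the entire task reduces to showing $D(G)\cap D(G^c) = \varnothing$.

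To establish disjointness, I would show that membership in $D(G)$ forces small eccentricity in the complement: if $v \in D(G)$, then $\operatorname{ecc}_{G^c}(v) \le 2$, so that $v \notin D(G^c)$. Fix $v \in D(G)$ and let $x$ be a witness, that is, a vertex with $d_G(v,x) \ge 3$. Two structural facts follow from this distance condition. First, $v$ and $x$ are non-adjacent in $G$, hence adjacent in $G^c$. Second, $v$ and $x$ have no common neighbor in $G$ (otherwise they would be at distance $2$). The crux is to exploit the second fact: I would argue that $x$ acts as a universal ``relay'' in $G^c$, joining $v$ to every remaining vertex in at most two steps.

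Concretely, take any $w \ne v$ and aim to show $d_{G^c}(v,w) \le 2$. If $w = x$, or if $w$ is already adjacent to $v$ in $G^c$, there is nothing to prove. Otherwise $w$ is a neighbor of $v$ in $G$; because $v$ and $x$ share no common neighbor in $G$, the vertex $w$ cannot also be adjacent to $x$ in $G$, and so $w$ is adjacent to $x$ in $G^c$. Combined with the adjacency $v \sim x$ in $G^c$, this produces the length-two walk $v - x - w$ in $G^c$. Hence every vertex lies within distance $2$ of $v$ in $G^c$, giving $\operatorname{ecc}_{G^c}(v) \le 2$ and therefore $v \notin D(G^c)$.

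The step demanding the most care is the relay argument and its accompanying case analysis: one must handle the degenerate possibilities ($w = x$, or $w$ already a $G^c$-neighbor of $v$) and confirm that $x$ is a genuine intermediate vertex, distinct from both $v$ and $w$, so that the walk $v - x - w$ is valid. The essential content is the translation of the combinatorial statement ``$v$ and $x$ have no common neighbor in $G$'' into the statement ``$x$ is adjacent in $G^c$ to every $G$-neighbor of $v$,'' which is what makes $x$ a single relay covering all vertices not already close to $v$ in $G^c$.
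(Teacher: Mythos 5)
Your proof is correct and takes essentially the same route as the paper: both arguments show that any vertex $v$ of eccentricity at least $3$ in $G$ has eccentricity at most $2$ in $G^c$ (hence $D(G)$ and $D(G^c)$ are disjoint), using the distance-$3$ witness as a common $G^c$-neighbor of $v$ and each $G$-neighbor of $v$. Your write-up merely makes explicit the disjointness framing and the degenerate cases that the paper handles implicitly.
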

 \begin{proof}
 We claim that any vertex $v$ of $G$ that has eccentricity 3 or more in $G$ will have eccentricity at most 2 in $G^c$.  To this end, suppose $v$ has eccentricity 3 in $G$.  We will show that $v$ is distance at most 2 from any vertex in $G^c$.  Let $x$ be an arbitrary vertex of $G$.  If $v$ and $x$ are not adjacent in $G$, then they are adjacent in $G^c$, so assume that $v$ and $x$ are adjacent in $G$.  Let $y$ be a vertex in $G$ at distance 3 or more from $v$ in $G$.  Note that $y$ cannot be adjacent to $x$, since if it was, then since $x$ is adjacent to $v$ in $G$, then $y$ would be distance only 2 from $v$.  Thus, in $G^c$, $y$ is adjacent to both $v$ and $x$.  Thus $v$ and $x$ are within distance 2 of each other in $G^c$.  This completes the proof.
 \end{proof}
 
 \begin{theorem}\label{thm:conj_impl_spread}
 Let $G$ be a graph on $n\geq3$ vertices.  Then
 Conjecture \ref{conj:bound} implies the inequality (\ref{eq:lam2}), the symmetric formulation of the Laplacian spread conjecture.
 \end{theorem}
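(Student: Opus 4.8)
The plan is to feed the conjecture into both $G$ and its complement and then reduce the whole statement to a single inequality about $\f_n$. Writing $k=|D(G)|/2$ and $k'=|D(G^c)|/2$, Conjecture \ref{conj:bound} supplies $\lambda_2(G)\geq \f_n(k)$ and $\lambda_2(G^c)\geq \f_n(k')$, while Proposition \ref{prop:comp} supplies $k+k'\leq n/2$ (and hence $0\leq k,k'\leq n/2$). So it is enough to prove the purely analytic assertion
\[
\f_n(k)+\f_n(k')\geq 1 \quad\text{whenever } k,k'\geq 0 \text{ and } k+k'\leq n/2 .
\]
Once this is in hand, chaining it with the two conjectured bounds gives $\lambda_2(G)+\lambda_2(G^c)\geq 1$, which is exactly (\ref{eq:lam2}).

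The heart of the matter, and the step I expect to be the main obstacle, is a \emph{linear} lower bound on $\f_n$. First I would record the endpoint values $\f_n(0)=1$ and $\f_n(n/2)=0$, each an instant simplification of the radical. The target is then that $\f_n$ lies above the chord joining these endpoints, namely $\f_n(x)\geq 1-\tfrac{2x}{n}$ for $x\in[0,n/2]$. I see two routes. The clean route is to show $\f_n$ is concave on this interval: differentiating twice, the sign of $\f_n''$ is controlled by the quantity $4D-(D')^2$, where $D$ denotes the expression under the square root, and the pleasant surprise is that this collapses to the positive constant $16(n-2)$, so $\f_n''<0$ for $n\geq 3$ and the chord bound follows. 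The alternative, calculus-free route observes that $\f_n(x)$ is the smaller root of $q(\lambda)=\lambda^2-(n-x+1)\lambda+(n-2x)$; since the candidate value $L=1-\tfrac{2x}{n}$ lies to the left of the vertex of this upward parabola, it suffices to verify $q(L)\geq 0$, and here the algebra factors neatly as $q(L)=\tfrac{x(n-2x)(n-2)}{n^2}\geq 0$ on the relevant domain.

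With the linear bound established the conclusion is immediate: summing $\f_n(k)\geq 1-\tfrac{2k}{n}$ and $\f_n(k')\geq 1-\tfrac{2k'}{n}$ yields
\[
\f_n(k)+\f_n(k')\geq 2-\frac{2(k+k')}{n}\geq 2-1=1,
\]
where the last step uses $k+k'\leq n/2$. I would close by noting that equality throughout forces $k+k'=n/2$ together with $\{k,k'\}=\{0,n/2\}$, precisely the regime in which the conjectured bound is sharp; the monotonicity of $\f_n$ recorded in Lemma \ref{lem:decr} is a useful consistency check but is not actually needed once one has the chord bound.
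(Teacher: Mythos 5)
Your proof is correct, and it takes a genuinely different route from the paper's. The paper uses the monotonicity of $\f_n$ (its Lemma \ref{lem:decr}) to replace $k'=|D(G^c)|/2$ by its upper bound $(n-2k)/2$, and then analyzes the single-variable function $g_n(k)=\f_n(k)+\f_n\bigl(\tfrac{n-2k}{2}\bigr)-1$, showing $g_n(0)=g_n(n/2)=0$ and that $g_n$ is concave down (Lemma \ref{lem:concavedown}), which forces $g_n\geq 0$ on $[0,n/2]$. You instead decouple the two terms via the chord bound $\f_n(x)\geq 1-\tfrac{2x}{n}$ on $[0,n/2]$ and simply add. Both of your verifications of the chord bound check out: for the concavity route, with $D(x)=(n-x+1)^2-4(n-2x)$ one has $D''=2$, so the sign of $\f_n''$ is governed by $2D''D-(D')^2=4D-(D')^2$, and since $D=(n-x-3)^2+4(n-2)$ while $(D')^2=4(n-x-3)^2$, this is indeed the constant $16(n-2)>0$ for $n\geq3$; for the root-location route, $L=1-\tfrac{2x}{n}=\tfrac{n-2x}{n}$ satisfies $L\leq 1\leq\tfrac{n-x+1}{2}$ (the vertex) for $n\geq 2$ and $x\leq n/2$, and $q(L)=\tfrac{x(n-2x)(n-2)}{n^2}\geq0$ as you claim, so $L$ lies at or below the smaller root $\f_n(x)$. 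What each approach buys: your argument avoids the two-radical algebra of $g_n$ entirely, needs neither appendix lemma (you are right that monotonicity of $\f_n$ becomes unnecessary, since the chord bound handles all pairs with $k+k'\leq n/2$ uniformly rather than via the worst case $k'=(n-2k)/2$), and localizes all the work in a one-line factorization; the paper's approach, by working with $g_n$ directly, exhibits the exact amount of slack $g_n(k)$ above $1$, which it then exploits in Section \ref{sec:strengthen} to get the strengthened bound $1+\tfrac{2n-8}{n(n+4)}$ for diameter-$3$ pairs --- information your linear bound discards, since the chord is tight only at $k\in\{0,n/2\}$ (consistent with your closing equality analysis). Note also that your chord bound is genuinely needed here: the paper's simpler rational bound \eqref{eq:weak_bound} is weaker than the chord for $x\geq1$ and would not suffice to recover \eqref{eq:lam2} at the endpoints, so your choice of linearization is the right one.
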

 \begin{proof}
 Note that Conjecture \ref{conj:bound} together with Proposition \ref{prop:comp} and Lemma \ref{lem:decr} in the appendix, with $k=|D(G)|/2$, imply that 
 \begin{align*}
 \lambda_2(G)+\lambda_2(G^c) &\geq %\frac{n-k+1-\sqrt{(n-k+1)^2-4(n-2k)}}{2} + \frac{n-\frac{n-2k}{2}+1-\sqrt{(n-\frac{n-2k}{2}+1)^2-4(n-2\frac{n-2k}{2})}}{2}\\
 \f_n(k)+\f_n\left(\frac{n-2k}{2}\right)\\
 &=1+\frac{3n/2-\sqrt{(n-k+1)^2-4(n-2k)}-\sqrt{(\frac{n+2k+2}{2})^2-8k}}{2}\\
 &=1+g_n(k).
 \end{align*}
 where we define 
 \begin{equation}\label{eq:gnkcd}g_n(k)=\frac{3n/2-\sqrt{(n-k+1)^2-4(n-2k)}-\sqrt{(\frac{n+2k+2}{2})^2-8k}}{2}.\end{equation}
 
Thus we will be done if we can prove that $g_n(k)$ is nonnegative for $k\in[0,\frac{n}{2}]$. Observe that $g_n(0)=g_n(n/2)=0$, and that $g_n$ is concave down on the interval $k\in[0,\frac{n}{2}]$ (see Lemma~\ref{lem:concavedown} in the appendix).  This gives the desired result. 
 \end{proof}

%In \cite{} it is shown that \[\min\{\lambda_2(G),\lambda_2(G^c)\}\geq\frac25.\]
%It is remarked in that paper that the supremum over all graphs of this maximum cannot exceed $2-\sqrt2\approx.5858$, and that is achieved by the path on 4 vertices.  Conjecture 1 implies that this is indeed the correct bound on the maximum.
\begin{theorem}
Let $G$ be a graph on $n\geq2$ vertices.  Then
Conjecture \ref{conj:bound} implies that 
\[
\max\{\lambda_2(G),\lambda_2(G^c)\} \geq 2-\sqrt2.
\]
\end{theorem}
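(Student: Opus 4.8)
The plan is to combine the monotonicity of $\f_n$ with the complementary bound of Proposition~\ref{prop:comp}, and then reduce everything to a single scalar inequality. Set $k=|D(G)|/2$ and $k'=|D(G^c)|/2$. Proposition~\ref{prop:comp} gives $k+k'\le n/2$, hence $\min\{k,k'\}\le n/4$. Since the statement to be proved is symmetric under $G\leftrightarrow G^c$, I may assume $k\le k'$, so that $k\le n/4$. Applying Conjecture~\ref{conj:bound} to $G$ and then using that $\f_n$ is decreasing (Lemma~\ref{lem:decr}), I obtain
\[
\max\{\lambda_2(G),\lambda_2(G^c)\}\ge\lambda_2(G)\ge\f_n(k)\ge\f_n(n/4).
\]
Thus it suffices to show $\f_n(n/4)\ge 2-\sqrt2$.

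The core of the argument is the verification of this last inequality. Substituting $x=n/4$ into the definition of $\f_n$ and simplifying the radicand gives $\f_n(n/4)=\tfrac12\bigl(3n/4+1-\sqrt{9n^2/16-n/2+1}\,\bigr)$, so that $\f_n(n/4)\ge 2-\sqrt2$ is equivalent to $3n/4-3+2\sqrt2\ge\sqrt{9n^2/16-n/2+1}$. For $n\ge 4$ the left-hand side is positive, so I can square both sides; the quadratic terms $9n^2/16$ cancel, and the inequality collapses to the linear condition $(3\sqrt2-4)\,n\ge 4(3\sqrt2-4)$, i.e.\ $n\ge 4$. Hence $\f_n(n/4)\ge 2-\sqrt2$ for every $n\ge 4$, with equality exactly at $n=4$.

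It remains to treat $n\in\{2,3\}$, where $n/4<1$ and the bound $\f_n(n/4)$ is too weak. Here I would argue structurally rather than numerically: at least one of $G,G^c$ is connected, and a connected graph on at most three vertices has diameter at most $2$, so one of $D(G),D(G^c)$ is empty. Say $D(G)=\emptyset$; then $k=0$ and $\f_n(0)=\tfrac12\bigl(n+1-\sqrt{(n-1)^2}\,\bigr)=1$, giving $\max\{\lambda_2(G),\lambda_2(G^c)\}\ge\lambda_2(G)\ge 1\ge 2-\sqrt2$.

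The only real obstacle is the scalar inequality $\f_n(n/4)\ge 2-\sqrt2$: one must isolate the square root, check positivity of the remaining side before squaring, and confirm that the quadratic terms cancel so that the problem reduces to $n\ge 4$. Everything else---the reduction via $\min\{k,k'\}\le n/4$, the use of monotonicity, and the small-order cases---is routine. The emergence of the threshold $n=4$ together with equality value $2-\sqrt2$ is reassuring, since $2-\sqrt2$ is exactly $\lambda_2(P_4)$, the self-complementary graph on four vertices flagged in the introduction as the conjectured optimum (indeed $D(P_4)=\{v_1,v_4\}$ makes $k=k'=n/4=1$, so this case is tight for both Proposition~\ref{prop:comp} and the final bound).
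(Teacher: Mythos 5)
Your proposal is correct and takes essentially the same approach as the paper: both reduce, via Proposition~\ref{prop:comp} and the monotonicity of $f_n$ (Lemma~\ref{lem:decr}), to the single scalar bound $f_n(n/4)$, which equals $2-\sqrt2$ at $n=4$ and exceeds it for all larger $n$. Your direct squaring verification (in place of the paper's claim that the expression is increasing in $n$) and your explicit handling of $n\in\{2,3\}$ (which the paper dispatches with ``we may assume $n\ge4$'') are minor, valid variations, and your WLOG reduction via $\min\{k,k'\}\le n/4$ matches the paper's equalization of the two bounds at $k=n/4$.
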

\begin{proof}
We may assume $n\geq4$.  Conjecture 1 and Proposition \ref{prop:comp} imply that 
\begin{align*}
    %\max&\{\lambda_2(G),\lambda_2(G^c)\}\\ &\geq \max\left\{\frac{n-k+1-\sqrt{(n-k+1)^2-4(n-2k)}}{2},\frac{n-\frac{n-2k}{2}+1-\sqrt{(n-\frac{n-2k}{2}+1)^2-4(n-2(\frac{n-2k}{2}))}}{2}\right\}.
    \max\{\lambda_2(G),\lambda_2(G^c)\}&\geq\max\left\{\f_n(k),\f_n\left(\frac{n-2k}{2}\right)\right\}.
\end{align*}
As remarked previously, $\f_n$ is a decreasing function, and thus the lowest possible maximum will be achieved at the value of $k$ when these two terms equal.  This occurs for $k=\frac{n}{4}$.  Plugging this value of $k$ in, we find 
\begin{align*}
    \max\{\lambda_2(G),\lambda_2(G^c)\}&\geq\frac{\frac{3}{4}n+1-\sqrt{(\frac34n+1)^2-2n}}{2}.
    %=\frac{n}{\frac{3}{4}n+1+\sqrt{(\frac34n+1)^2-2n}}.
\end{align*}
This expression is increasing in $n$, so it is at least its value at $n=4$.  Thus we obtain
\[
\max\{\lambda_2(G),\lambda_2(G^c)\}\geq  2-\sqrt{2}.
\]
\end{proof}

\subsection{Strengthening the Laplacian Spread Conjecture}\label{sec:strengthen}

While equality in the spread conjecture can be achieved, it is only achieved for graphs when $G$ or $G^c$ has a dominating vertex whose deletion yields a disconnected graph (see Theorem 1 of \cite{einollahzadeh2021proof}).  In particular, in any case of equality, either $G$ or $G^c$ has diameter 2. Moreover, it has been known for some time (see the proof of Theorem 2.4 in \cite{zhai2011laplacian}) that for all graphs of diameter less than or equal to 2, $\lambda_2(G)\geq1$, and thus the inequality (\ref{eq:lam2}) holds. It is of interest then to determine the minimum of $\lambda_2(G)+\lambda_2(G^c)$ over all graphs with diameter 3 and diameter 3 complement.

In Figure \ref{fig:curve}, we have plotted (for $n=7$) as a solid curve (in red) the ordered pairs $(f_n(k),f_n(\frac{n-2k}{2}))$ for $k\in (0,\frac{n}{2})$. % whose $x$-coordinate is the bound from Conjecture 1, and $y$-coordinate is the complementary bound (with $k$ replaced by $(n-2k)/2$).  
Thus Conjecture 1 implies that the ordered pair $(\lambda_2(G),\lambda_2(G^c))$ always lies above the solid (red) curve. 
%Three other conjectured lower-bound curves are displayed in the figure. 
Further discussion of this figure will take place in Section \ref{sec:conjs}. 
In particular, when $G$ and $G^c$ have diameter 3, since the solid (red) curve is concave down (by the proof of Theorem \ref{thm:conj_impl_spread}), the worst case scenario is when $G$ has only 2 eccentricity 3 vertices, and $G^c$ has $n-2$, so we plug in $k=1$ and $k=n/2-1$ and we find that if $G$ and $G^c$ have diameter 3, then
\begin{align*}
\lambda_2(G)+\lambda_2(G^c)&\geq \frac{n-\sqrt{n^2-4n+8}}{2}+\frac{\frac{n}{2}+2-\sqrt{(\frac{n}{2}+2)^2-8}}{2}\\
&=\frac{2n-4}{n+\sqrt{n^2-4n+8}} + \frac{8}{n+4+\sqrt{(n+4)^2-32}}\\
&\geq \frac{2n-4}{2n}+\frac{4}{n+4}\\
&=1+\frac{2n-8}{n(n+4)}.
\end{align*}

Thus for diameter 3 graphs with $n>4$, our conjecture implies that $\lambda_2(G)+\lambda_2(G^c)$ is strictly larger than 1.

\begin{figure}
    \captionsetup{singlelinecheck=off, format=hang}
    \centering
    \includegraphics[width=4in]{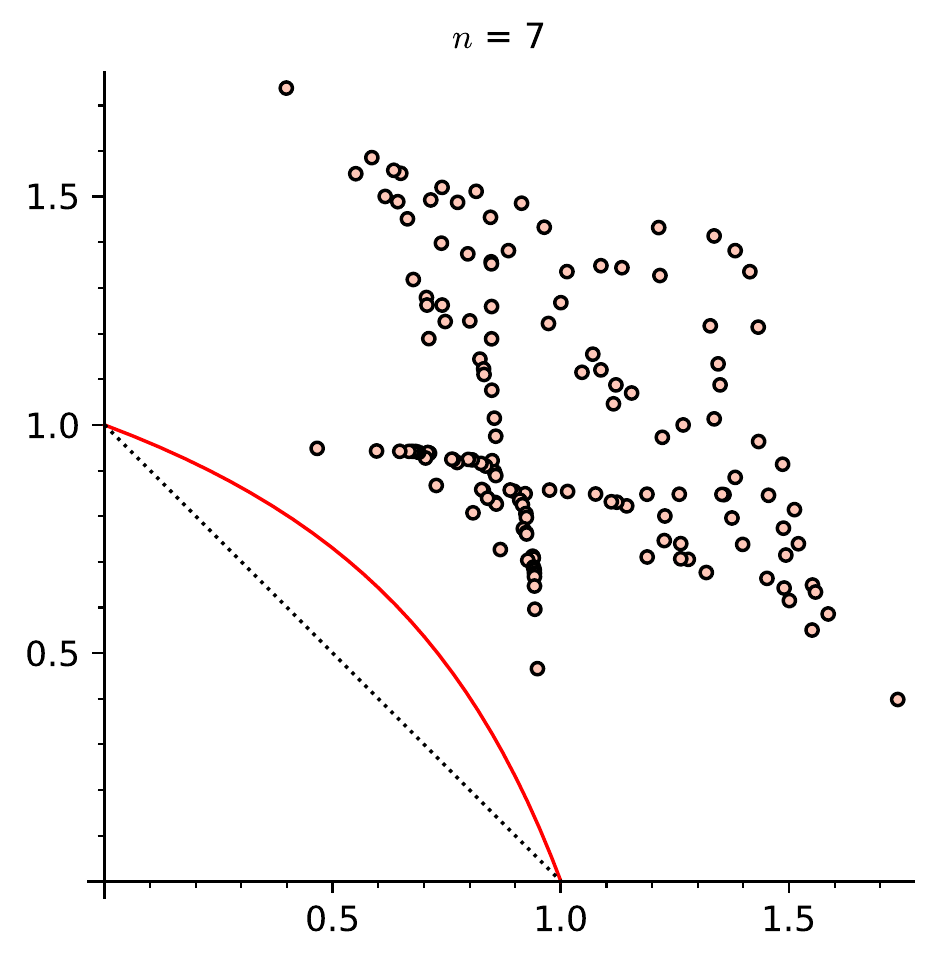}
    \caption[Points and conjectures for $n = 7$]%
    {On $n=7$ vertices there are $314$ graphs $G$ for which both $G$ and $G^c$ have diameter $3$, and for
    each of these the ordered pair $(\lambda_2(G),\lambda_2(G^c))$ is plotted.
    Also plotted are two conjectured lower bounds:
    
        \textbullet$\mspace{10mu}$Dotted (black), representing the line $\lambda_2(G)+\lambda_2(G^c) = 1$.
        
        \textbullet$\mspace{10mu}$Solid (red), containing all ordered pairs $(x, y)$ where $x$ is the
                                       bound from  Conjecture~1, and $y$ is the complementary bound
                                       (with $k$ replaced by $(n-2k)/2$).
    
     This plot will be repeated in Section~\ref{sec:conjs} with additional conjectures.
    }
    \label{fig:curve}
\end{figure}

%FILL IN: MORE DISCUSSION ON CURVES
 
\subsection{A Weaker Form of the Conjecture} \label{sec:weak}

As a side note, we observe that there is a weaker, but simpler form that Conjecture \ref{conj:bound} could take that still gives a relatively strong lower bound on $\lambda_2$.  

Consider the expression in (\ref{eq:conj_bound}) and observe that
\begin{align*}
    \frac{n-k+1-\sqrt{(n-k+1)^2-4(n-2k)}}{2} &= %\frac{n-k+1-\sqrt{(n-k+1)^2-4(n-2k)}}{2}\frac{n-k+1+\sqrt{(n-k+1)^2-4(n-2k)}}{n-k+1+\sqrt{(n-k+1)^2-4(n-2k)}}\\&=\frac{4(n-2k)}{2\left(n-k+1+\sqrt{(n-k+1)^2-4(n-2k)}\right)}\\
    %&=
    \frac{2(n-2k)}{n-k+1+\sqrt{(n-k+1)^2-4(n-2k)}}\\
    &\geq\frac{2(n-2k)}{(n-k+1)+(n-k+1)}\\
    %&=\frac{(n-2k)}{(n-2k)}\\
    &=\frac{(n-k+1)-(k+1)}{n-k+1}\\
    &=1-\frac{k+1}{n-k+1}.
\end{align*}
Thus, a weaker version of Conjecture \ref{conj:bound} can be formulated as:

%\begin{conjecture}\label{conj:weak}
%For a graph $G$ on $n$ vertices with $k=|D(G)|/2$, we have 
\begin{equation}\label{eq:weak_bound}
\lambda_2(G)\geq 1 - \frac{k+1}{n-k+1}.
\end{equation}
%\end{conjecture}

Once again, this expression is strictly decreasing in $k$.  Even though (\ref{eq:weak_bound}) is weaker than (\ref{eq:conj_bound}), it is simpler, and is a rational function, and thus may be easier to prove.

In Figure \ref{fig:curves_weak}, we plot this weaker bound in comparison to the stronger conjectured bound.

\begin{figure}
    \centering
    \includegraphics[width=2in]{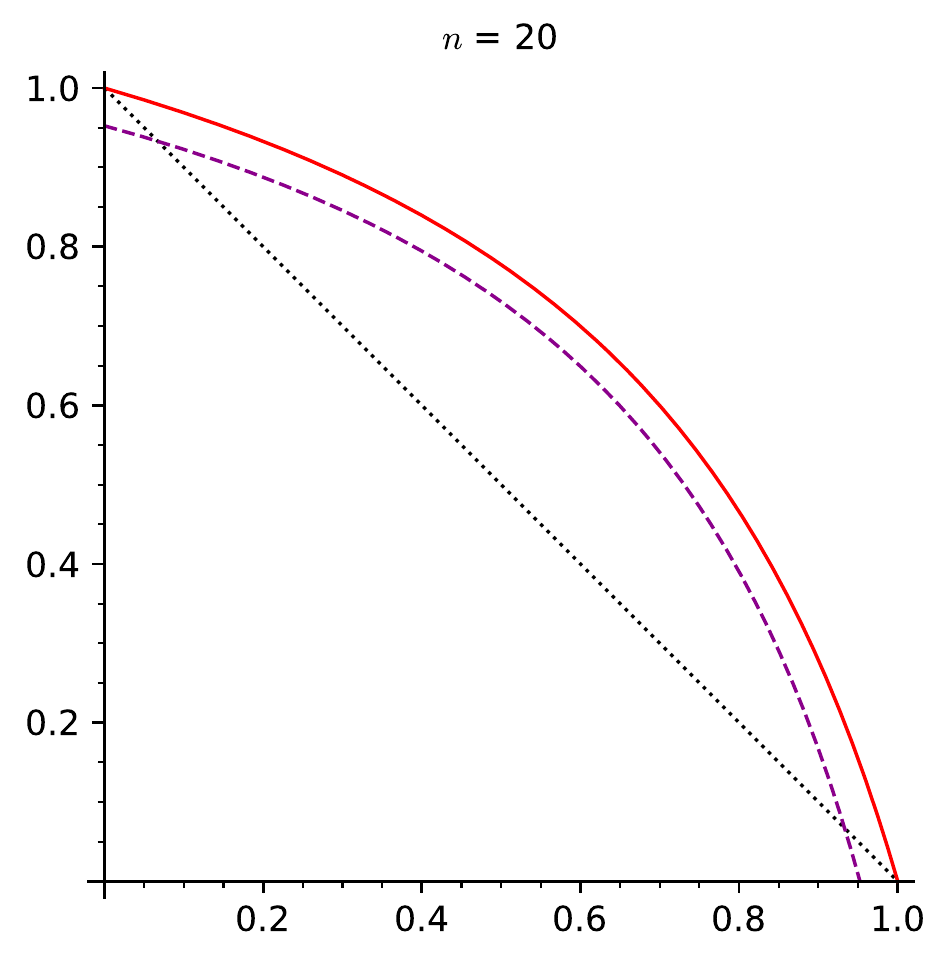}
    \caption{The dashed (purple) curve plots the weaker bound of Section \ref{sec:weak} compared to the solid (red) curve of the original bound, and the line $x+y=1$. }
    \label{fig:curves_weak}
\end{figure}

\subsection{Comparison with other known bounds}
We end this section by discussing comparisons between the bound of Conjecture \ref{conj:bound} and some other known lower bounds on $\lambda_2$.  Perhaps the most recognized lower bound on $\lambda_2$ is that of Mohar \cite{mohar1991eigenvalues} that involves the diameter $d$ of the graph:
\begin{equation}\label{eq:mohar}
    \lambda_2 \geq\frac{4}{nd}\ .
\end{equation}

Suppose $G$ is a diameter 3 graph with at least 1 vertex of eccentricity less than 3.  Then Conjecture \ref{conj:bound} gives a lower bound on $\lambda_2$ of at least $f_n(\frac{n-1}{2})$ while the bound from (\ref{eq:mohar}) gives a lower bound of $\frac{4}{3n}$.  A straightforward computation shows that $f_n(\frac{n-1}{2})>\frac{4}{3n}$ for any $n\geq5$.  Thus our bound is strictly stronger than the bound from (\ref{eq:mohar}) for any such graph.  Similarly, our bound is stronger for any diameter 4 graph with at least one vertex that has eccentricity less than 3.

An improvement on the bound in (\ref{eq:mohar}) was given by Lu et.~al.~\cite{lu2007lower} which states that 
\begin{equation}\label{eq:lu}
    \lambda_2\geq\frac{2n}{2+d(n(n-1)-2m)}
\end{equation}
where $d$ again represents the diameter, and $m$ is the number of edges in the graph.  Direct computation shows that for $d=3$, $f_n(k) \geq \frac{2n}{2+d(n(n-1)-2m)} $ if and only if 
\begin{align*}
%16n^2-8n(n-k+1)(3n^2-3n-6m+2)+4(n-2k)(3n^2-3n-6m+2)^2\geq0.
36n^5&-72kn^4-144mn^3+288kmn^2-96n^4+168kn^3+84n^3+192mn^2-192kn^2\\&-24n^2+144m^2n-336kmn-48mn+112kn-288km^2+192km-32k \geq 0.
\end{align*}
Note that if $m=o(n^2)$ and $k=o(n)$, then for sufficiently large $n$, the bound of Conjecture \ref{conj:bound} is stronger than the bound of (\ref{eq:lu}).  In Figure \ref{fig:boundcompare}, for each diameter 3 graph on 7 vertices, we plot both the bound in Conjecture \ref{conj:bound} versus the true value of $\lambda_2$, and the bound from (\ref{eq:lu}) versus $\lambda_2$.  We can thus see empirically that the Conjecture \ref{conj:bound} bound is stronger in very many cases.

\begin{figure}[h!]
     \centering
     \begin{subfigure}[b]{0.49\textwidth}
         \centering
         \includegraphics[width=\textwidth]{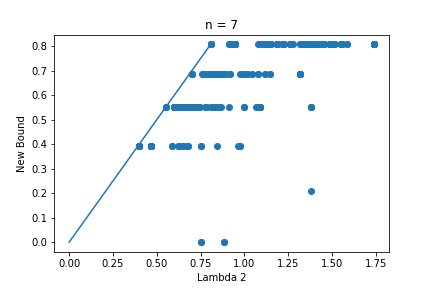}
         %\caption{28 month offset}
         \label{fig:nb}
     \end{subfigure}
     \hfill
     \begin{subfigure}[b]{0.49\textwidth}
         \centering
         \includegraphics[width=\textwidth]{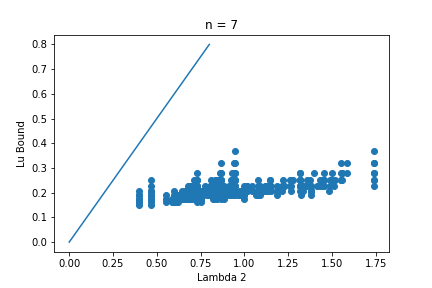}
         %\caption{45 month offset}
         \label{fig:lu}
     \end{subfigure}
        \caption{A comparison of the bound in Equation~\eqref{conj:bound} and Equation~\eqref{eq:lu}.}
        \label{fig:boundcompare}
\end{figure}

\section{Some Families where the Conjecture is True}\label{sec:motivation}

In this section, we will show that Conjecture \ref{conj:bound} is true for certain families of graphs.  Close examination of one family in particular was the motivation to make the conjecture.  We will then closely examine a subclass of this family that we call ``dandelions" and will conjecture that the class of dandelions optimizes the spread over all diameter 3 graphs.

\subsection{Basic Observations}
There are some situations where the bound in Conjecture \ref{conj:bound} is trivial, or where it follows easily from known results.  We make note of those situations in this section.
Note that if $k=0$, then no vertices of $G$ have eccentricity 3 or more, so $G$ has diameter at most 2.  Plugging in $k=0$ into (\ref{eq:conj_bound}) yields the bound $\lambda_2(G)\geq1$.  As noted in the first paragraph of Section \ref{sec:strengthen}, this is known to be true for graphs of diameter 2 or less. %(see, for instance, the proof of Theorem 2.4 in \cite{zhai2011laplacian}). %(CITE SOMETHING, OR SHOULD WE INCLUDE OUR PROOF?).  
Thus we have the following.
\begin{proposition}\label{prop:diam2}
Conjecture \ref{conj:bound} is true if the diameter of $G$ is at most 2.
\end{proposition}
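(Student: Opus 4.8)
The plan is to reduce the conjectured bound to the classical diameter-$2$ estimate on $\lambda_2$. First I would observe that if $\operatorname{diam}(G)\le 2$, then every vertex has eccentricity at most $2$, so no vertex lies in $D(G)$; hence $|D(G)|=0$ and the parameter $k=|D(G)|/2$ equals $0$. Thus the only instance of Conjecture~\ref{conj:bound} that needs checking in this regime is the value of $\f_n$ at $k=0$.

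Next I would evaluate $\f_n(0)$ directly. Since
\[
(n+1)^2-4n = n^2-2n+1 = (n-1)^2,
\]
we obtain
\[
\f_n(0)=\frac{(n+1)-\sqrt{(n+1)^2-4n}}{2}=\frac{(n+1)-(n-1)}{2}=1.
\]
So for graphs of diameter at most $2$ the inequality (\ref{eq:conj_bound}) asserts precisely that $\lambda_2(G)\ge 1$.

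Finally, this last inequality is exactly the known result quoted in the first paragraph of Section~\ref{sec:strengthen} (see the proof of Theorem~2.4 in \cite{zhai2011laplacian}): every graph of diameter at most $2$ satisfies $\lambda_2(G)\ge 1$. Invoking it completes the argument. If one preferred a self-contained derivation in place of the citation, the natural route would be to combine the complement identity $\lambda_2(G)=n-\lambda_n(G^c)$ with structural information about $G^c$ (note that $\operatorname{diam}(G)\le 2$ forces any two non-adjacent vertices of $G$ to share a common neighbor, which constrains $G^c$), but leaning on the classical bound is by far the cleanest option.

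I do not expect any genuine obstacle here. The entire mathematical content of the proposition lives in the classical diameter-$2$ estimate for $\lambda_2$, and the only thing to verify on our side is the one-line algebraic fact that the conjectured bound degenerates to $\f_n(0)=1$ when $k=0$. The point of recording the proposition is simply to confirm that Conjecture~\ref{conj:bound} is consistent with, and in fact recovers, this well-established special case.
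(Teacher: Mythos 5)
Your argument is correct and is essentially the paper's own proof: both reduce the case $\operatorname{diam}(G)\le 2$ to $k=0$, evaluate $\f_n(0)=1$, and invoke the known bound $\lambda_2(G)\ge 1$ for diameter-$2$ graphs from \cite{zhai2011laplacian}. Your explicit verification that $(n+1)^2-4n=(n-1)^2$ is a nice touch the paper leaves implicit, but the route is the same.
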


Observe also that plugging in $k=n/2$ yields a bound of 0, which is trivial.  Observe that if the diameter of $G$ is 5 or more, then every vertex has eccentricity at least 3, so in that case $k=n/2$.  Thus we have the following.

\begin{proposition}\label{prop:diam5}
Conjecture \ref{conj:bound} is true if the diameter of $G$ is 5 or more.
\end{proposition}

Thus Conjecture \ref{conj:bound} only has significance for graphs of diameter 3 or 4.

We remark that any cycle of diameter 3 or more, because of the symmetry of the graph, will have all vertices of eccentricity 3 or more.  Thus by the same reasoning as above, we have the following fact.
\begin{proposition}
Conjecture \ref{conj:bound} is true for all cycles. 
\end{proposition}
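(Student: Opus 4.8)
The plan is to split into two cases according to the diameter of the cycle, exploiting the fact that a cycle is vertex-transitive: its rotational automorphism group acts transitively on the vertices, so every vertex has the same eccentricity, which therefore equals the diameter $d = \lfloor n/2 \rfloor$. This single symmetry observation is what makes the eccentricity bookkeeping trivial.

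In the first case, when $d \le 2$ (that is, $n \in \{3,4,5\}$), the cycle has diameter at most $2$, and so the conjecture holds immediately by Proposition \ref{prop:diam2}, with no further work required.

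In the remaining case, $d \ge 3$ (that is, $n \ge 6$), every vertex has eccentricity at least $3$ by the transitivity remark above. Hence $D(G) = V(G)$, giving $|D(G)| = n$ and $k = |D(G)|/2 = n/2$. I would then simply evaluate the conjectured bound at this endpoint: since $n - 2x = 0$ when $x = n/2$, the expression $\f_n(n/2)$ collapses to $\tfrac{1}{2}\bigl((n/2+1) - \sqrt{(n/2+1)^2}\,\bigr) = 0$. Because $\lambda_2(G) \ge 0$ holds for every graph, the inequality (\ref{eq:conj_bound}) is satisfied trivially.

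There is no genuine obstacle here: the only substantive point is the eccentricity computation, which is immediate from symmetry, together with the observation that the conjectured lower bound degenerates to $0$ exactly at $k = n/2$. This is the same mechanism already used to establish Proposition \ref{prop:diam5}, and the entire proof amounts to verifying that cycles fall into one of these two regimes.
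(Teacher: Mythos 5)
Your proof is correct and follows essentially the same route as the paper: the paper likewise observes that by symmetry every vertex of a cycle of diameter at least $3$ has eccentricity at least $3$, so $k = n/2$ and the bound degenerates to $0$, while cycles of diameter at most $2$ are covered by the diameter-$2$ proposition. Your explicit case split and evaluation of $\f_n(n/2)$ merely spell out what the paper leaves as ``the same reasoning as above.''
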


\subsection{The Family $G(r,i,j)$}\label{sec:G}
 
 Let us denote by $G(r,i,j)$ the graph on $n=i+j+r+2$ vertices obtained as follows.  Begin with an edge between vertices $a$ and $b$.  Then attach $r$ vertices to both vertex $a$ and vertex $b$, then $i$ vertices to vertex $a$, and $j$ vertices to vertex $b$.  See Figure \ref{fig:grij}.

\begin{figure}
    \centering
    \begin{tikzpicture}
    \draw (-1,0)node{}--(1,0)node{}--(0,1)node{}--(-1,0) (1,0)--(0,2)node{}--(-1,0) (-1,0)--(-2,0)node{} (-1,0)--(-1.8,1)node{} (-1,0)--(-1.8,-1)node{} (1,0)--(1.8,.3)node{} (1,0)--(1.8,-.3)node{} (1,0)--(1.5,1)node{} (1,0)--(1.5,-1)node{};
    \end{tikzpicture}
    \caption{The graph $G(2,3,4)$}
    \label{fig:grij}
\end{figure}
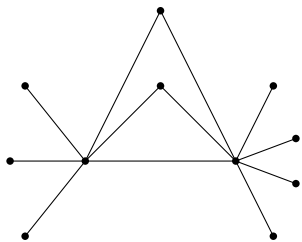
 
 Note that $|D(G(r,i,j))|=i+j$.  Empirical computation suggests that, among all diameter 3 graphs on $n$ vertices with $|D(G)|=\ell$, the graph with the smallest $\lambda_2$ is $G(n-\ell-2,\lfloor\ell/2\rfloor,\lceil\ell/2\rceil)$.  Thus the eigenvalues of this family of graphs are of interest.  
 
 We begin with a well-known lemma regarding ``twin vertices," that is, vertices with the same neighborhood.
 
 \begin{lemma}\label{lem:twins}
Let $G$ be a graph on $n$ vertices with $k$ non-adjacent twin vertices of common degree $d$.  Then $d$ is an eigenvalue of $L(G)$ of multiplicity at least $k-1$.  
\end{lemma}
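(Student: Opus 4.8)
The plan is to prove the statement directly by exhibiting $k-1$ linearly independent eigenvectors of $L(G)$ all belonging to the eigenvalue $d$. Let $v_1,\dots,v_k$ denote the non-adjacent twins. Since they are twins they share a single common neighborhood $N$, and since they are mutually non-adjacent we have $v_1,\dots,v_k\notin N$; their common degree is $d=|N|$. The natural candidate eigenvectors are the differences $e_{v_i}-e_{v_j}$ of standard basis vectors supported on the twins, which already vanish off the twin set and sum to zero on it.

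First I would fix two twins $v_i,v_j$, set $x=e_{v_i}-e_{v_j}$, and compute $Lx$ coordinatewise using $(Lx)_u=\deg(u)\,x_u-\sum_{w\sim u}x_w$. The verification splits into cases. At $u=v_i$ we get $d\cdot 1$ minus a sum over $N$, which is zero because $x$ vanishes off the twins and $N$ contains no twin; this yields $+d$, and symmetrically $-d$ at $u=v_j$. At any other twin $v_\ell$ the diagonal term is zero and again no neighbor carries weight, so the entry is $0$. The one case deserving attention is a common neighbor $u\in N$: here $u$ is adjacent to every twin, so the neighbor sum picks up both $x_{v_i}=+1$ and $x_{v_j}=-1$, which cancel, giving $0$. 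Finally, at a vertex outside $N$ and off the twin set, neither $v_i$ nor $v_j$ is a neighbor, so that entry vanishes too. Hence $Lx=d\,x$, and each such difference is an eigenvector for the eigenvalue $d$.

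To finish I would observe that the vectors $e_{v_1}-e_{v_2},\,e_{v_2}-e_{v_3},\,\dots,\,e_{v_{k-1}}-e_{v_k}$ are $k-1$ linearly independent elements of the eigenspace of $d$ (indeed they form a basis for the space of twin-supported vectors summing to zero), which forces the multiplicity of $d$ to be at least $k-1$.

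There is no genuine obstacle here: the result is standard and the computation is routine. The only point demanding care is the cancellation at the common neighbors $u\in N$, where it is essential that both $v_i$ and $v_j$ are adjacent to $u$ so that the $+1$ and $-1$ contributions annihilate; the non-adjacency hypothesis is precisely what guarantees that $N$ avoids the twins and keeps the diagonal computation clean.
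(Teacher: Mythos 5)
Your proof is correct and is essentially the paper's argument in a different dress: the difference vectors $e_{v_i}-e_{v_{i+1}}$ span exactly the subspace $E_k$ of twin-supported vectors summing to zero that the paper uses, and your coordinatewise verification is just the entrywise version of the paper's block-matrix computation $L(G)x = dx$. Nothing is missing, and the point you flag (cancellation at common neighbors, which requires the twins to lie outside $N$) is precisely where the non-adjacency hypothesis enters in both treatments.
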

\begin{proof}
Label the twin vertices vertices $1,2,...,k$ and their common neighbors $k+1,...,k+d$.  Let \[E_k=\{x\in\R^n:x_1+\cdots+x_k=0, x_{k+1}=\cdots=x_n=0\}.\]  We claim this is a $(k-1)$-dimensional eigenspace of $L(G)$ corresponding to eigenvalues $d$.  To see this, note $L(G)$ has the form
\[
L(G)=\begin{bmatrix}dI_k&-J_{k,d}&0\\-J_{k,d}&B&C\\0&C^T&D\end{bmatrix}
\]
where $J_{r,s}$ is the $r\times s$ matrix of all ones.  For $x\in E_k$ we can write $x=[y~0]^T$ where $\mathbf{1}_k^Ty=0$.  Then 
\[
L(G)x = \begin{bmatrix}dI_ky\\-J_{d,k}y\\0y\end{bmatrix}=dx.
\]
\end{proof}
 
 We will need the following Lemma, which is a standard result from the theory of equitable partitions (see \cite{cvetkovic}).  Recall that a partition of the vertex set $V(G)=V_1\dot\cup \cdots \dot\cup V_k$ is called an \emph{equitable partition} if for each $i,j\in\{1,...,k\}$ there are numbers $d_{ij}$ such that each vertex in $V_i$ has exactly $d_{ij}$ neighbors in $V_j$.
 
 \begin{lemma}[Lemma 7.1.6 of \cite{cvetkovic}]
Suppose $G$ has an equitable partition with parameters $d_{ij}$, $i,j=1,...,k$, and let $B=[b_{ij}]$ be the $k\times k$ matrix defined by \[b_{ij}=\begin{cases}-d_{ij}&i\neq j\\\sum_{s=1}^kd_{is}-d_{ij}&i=j\end{cases}.\]  Then any eigenvalue of $B$ is an eigenvalue of $L(G)$.  Moreover, any eigenvector of $L(G)$ coming from an eigenvalue of $B$ is constant on the parts of the partition.
\end{lemma}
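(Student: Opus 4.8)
The plan is to realize the quotient matrix $B$ as acting on the space of vectors that are constant on the parts of the partition, via the characteristic matrix of the partition, and then to lift eigenvectors of $B$ to eigenvectors of $L(G)$. Concretely, I would introduce the $n\times k$ characteristic matrix $S=[s_{vi}]$ defined by $s_{vi}=1$ if $v\in V_i$ and $s_{vi}=0$ otherwise, so that the columns of $S$ are the indicator vectors of the parts $V_1,\dots,V_k$. Since these parts are disjoint and nonempty, $S$ has full column rank $k$; in particular $Su=0$ forces $u=0$, a fact I will need at the very end.

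The key step is to verify the intertwining relation $L(G)S=SB$. Writing $L=L(G)=D-A$, I would compute, for a vertex $v\in V_i$, the $(v,j)$ entry of $LS$, which equals $\sum_{w\in V_j}L_{vw}$. When $j\neq i$ this sum picks up only off-diagonal entries and equals $-\lvert\{w\in V_j : w\sim v\}\rvert=-d_{ij}=b_{ij}$, while when $j=i$ it additionally picks up the diagonal term and equals $\deg(v)-d_{ii}=\sum_{s}d_{is}-d_{ii}=b_{ii}$. The equitable-partition hypothesis is exactly what makes these counts depend only on the index $i$ and not on the particular vertex $v\in V_i$, so the entry is a well-defined function of $(i,j)$. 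On the other hand, for $v\in V_i$ one has $(SB)_{vj}=\sum_{s}s_{vs}b_{sj}=b_{ij}$, since $s_{vs}=1$ only for $s=i$. Comparing, $LS$ and $SB$ agree entrywise, giving $LS=SB$.

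With the relation in hand, the conclusion is immediate: if $Bu=\mu u$ with $u\neq 0$, then $L(Su)=S(Bu)=\mu(Su)$, and $Su\neq 0$ because $S$ has full column rank. Hence $\mu$ is an eigenvalue of $L(G)$, and the corresponding eigenvector $Su$ takes the constant value $u_i$ on each part $V_i$, which establishes both the eigenvalue statement and the ``constant on the parts'' statement. I do not expect any genuine obstacle here: the only substantive content is the entry computation yielding $LS=SB$, and this is precisely the defining feature of an equitable partition, which is why the result can be quoted as standard.
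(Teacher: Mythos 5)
Your proof is correct: the entrywise computation of $(LS)_{vj}$ in the two cases $j\neq i$ and $j=i$ is exactly where the equitable hypothesis enters, the intertwining relation $L(G)S=SB$ follows, and lifting through the full-column-rank characteristic matrix $S$ yields both conclusions, including the right reading of the ``constant on the parts'' clause as referring to the lifted eigenvector $Su$ (which is how the paper uses it to separate quotient eigenvectors from twin eigenvectors). The paper itself gives no proof---it quotes the lemma as a standard result from its cited reference---and your argument is precisely the standard characteristic-matrix proof of that result, so the approaches coincide.
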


 \begin{lemma}\label{lem:charpolyG_rij}
 The characteristic polynomial of the Laplacian matrix of $G(r,i,j)$ is 
 \begin{multline*}
 x(x-2)^{r-1}(x-1)^{i+j-2}(x^4-(2r+i+j+6)x^3+(r^2+ir+jr+8r+ij+4i+4j+13)x^2\\-(2r^2+2ir+2jr+10r+2ij+5i+5j+12)x+r^2+ir+jr+4r+2i+2j+4).
 \end{multline*}
 \end{lemma}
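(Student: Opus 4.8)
The plan is to split the spectrum of $L(G(r,i,j))$ into a part coming from twin vertices and a part coming from an equitable partition. Label the two adjacent central vertices $a,b$, the $r$ vertices joined to both $a$ and $b$ (the \emph{common} vertices), the $i$ vertices joined only to $a$ and the $j$ vertices joined only to $b$ (the \emph{pendants}). The degrees are $\deg a = 1+r+i$, $\deg b = 1+r+j$, each common vertex has degree $2$, and each pendant has degree $1$. The $r$ common vertices form a set of non-adjacent twins of common degree $2$; the $i$ pendants at $a$ and the $j$ pendants at $b$ form two sets of non-adjacent twins of common degree $1$. Applying Lemma~\ref{lem:twins} three times shows that $2$ is a Laplacian eigenvalue of multiplicity at least $r-1$ and that $1$ is a Laplacian eigenvalue of multiplicity at least $(i-1)+(j-1)=i+j-2$, producing the factor $(x-2)^{r-1}(x-1)^{i+j-2}$. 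I assume $r,i,j\geq 1$; the degenerate cases in which some class is empty can be checked directly.

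Next I would account for the remaining eigenvalues. The twin eigenvectors number $r+i+j-3$ out of $n=r+i+j+2$, and each is supported on a single twin class and sums to zero there. Hence they all lie in the orthogonal complement of the subspace $U$ of vectors that are constant on the cells of the partition of $V(G)$ into the five classes $\{a\}$, $\{b\}$, the common vertices, the $a$-pendants, and the $b$-pendants. This partition is equitable, so both $U$ (of dimension $5$) and $U^\perp$ are $L$-invariant; since the twin eigenvectors are linearly independent and number $\dim U^\perp$, they span $U^\perp$. The action of $L$ on $U$ is represented in the cell-indicator basis by the quotient matrix
\[
B=\begin{bmatrix}
1+r+i & -1 & -r & -i & 0\\
-1 & 1+r+j & -r & 0 & -j\\
-1 & -1 & 2 & 0 & 0\\
-1 & 0 & 0 & 1 & 0\\
0 & -1 & 0 & 0 & 1
\end{bmatrix}
\]
furnished by the equitable-partition lemma (Lemma 7.1.6 of \cite{cvetkovic}). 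Therefore the characteristic polynomial of $L(G(r,i,j))$ equals $(x-2)^{r-1}(x-1)^{i+j-2}\det(xI-B)$.

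It remains to identify $\det(xI-B)$ with $x$ times the stated quartic. Since $G(r,i,j)$ is connected, $0$ is a simple eigenvalue with constant eigenvector, so $B\mathbf 1=0$ and $\det(xI-B)=x\,q(x)$ for a monic quartic $q$. Rather than expanding the $5\times 5$ determinant head-on, I would write the eigenvalue equation $Lx=\mu x$ for an eigenvector constant on cells, with values $\alpha,\beta,\gamma,\delta,\varepsilon$ on $a$, $b$, the common class, the $a$-pendants, and the $b$-pendants. The pendant and common equations give $\delta=\alpha/(1-\mu)$, $\varepsilon=\beta/(1-\mu)$, and $\gamma=(\alpha+\beta)/(2-\mu)$; substituting these into the equations at $a$ and $b$ and clearing the denominators $(1-\mu)(2-\mu)$ reduces to a $2\times 2$ homogeneous system in $(\alpha,\beta)$, whose determinant equals $q(\mu)$ up to the spurious linear factors $\mu$ and $(\mu-2)$ introduced by the clearing, which are then divided out. (Alternatively, one may expand $\det(xI-B)$ directly, which is feasible because the rows for the two pendant classes are sparse.) The main obstacle is purely computational: carrying out this determinant and matching every coefficient of the degree-four factor.

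Finally, I would guard the computation with consistency checks. The sum of all Laplacian eigenvalues equals $\sum_v \deg v = 2+4r+2i+2j$, and indeed $2(r-1)+(i+j-2)$ plus the sum $2r+i+j+6$ of the roots of the claimed quartic reproduces this total. Moreover, evaluating the claimed quartic gives $q(1)=-ij\neq 0$ and $q(2)=r(r+i+j)\neq 0$, which confirms that the quartic contributes no further factors of $(x-1)$ or $(x-2)$, so the twin multiplicities above are exactly $i+j-2$ and $r-1$ and the factorization is complete.
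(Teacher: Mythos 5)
Your proposal is correct and follows essentially the same route as the paper: Lemma \ref{lem:twins} supplies the factors $(x-2)^{r-1}$ and $(x-1)^{i+j-2}$, and the five-cell equitable partition yields exactly the quotient matrix $Q$ whose characteristic polynomial (computed directly, as in the paper) gives the factor $x$ times the quartic. Your invariant-subspace dimension count ($\dim U^\perp = r+i+j-3$ matching the number of twin eigenvectors) is in fact a cleaner justification of the paper's closing claim that all eigenvalues are accounted for, and your checks $q(1)=-ij$ and $q(2)=r(r+i+j)$ are accurate; the only quibble is that the factor $\mu$ in your cleared $2\times2$ determinant is the genuine zero eigenvalue rather than a spurious artifact of clearing denominators.
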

\begin{proof}

 Note that the graph $G(r,i,j)$ has an equitable partition with five parts: vertices $a$ and $b$ are each singleton vertices in their own part, the $r$ vertices attached to both consist of a part, the $i$ vertices attached to just $a$ are a part, and the $j$ vertices attached to just $b$ are a part. Thus $G(r,i,j)$ has equitable partition matrix
 \[
 Q=\begin{bmatrix}
 r+i+1&-1&-r&-i&0\\-1&r+j+1&-r&0&-j\\-1&-1&2&0&0\\-1&0&0&1&0\\0&-1&0&0&1
 \end{bmatrix}.
 \]
 Thus each eigenvalue of this matrix is an eigenvalue of the Laplacian of the graph.  The characteristic polynomial of $Q$ may be obtained by direct computation. This characteristic polynomial contains the factor $x$ and the quartic factor   \begin{multline*}
 x^4-(2r+i+j+6)x^3+(r^2+ir+jr+8r+ij+4i+4j+13)x^2\\-(2r^2+2ir+2jr+10r+2ij+5i+5j+12)x+(r^2+ir+jr+4r+2i+2j+4).
 \end{multline*}

 Observe that the $r$ vertices connected to both $a$ and $b$ are all twins, similarly the $i$ vertices connected to vertex $a$ are twins, and the $j$ vertices connected to vertex $b$ are twins.  Hence by Lemma \ref{lem:twins}, the spectrum of the Laplacian of the graph contains $r-1$ eigenvalues with value $2$ and $i+j-2$ eigenvalues with value $1$. Note that these eigenvalues coming from twins have eigenvectors that sum to 0 on the parts of the equitable partition, whereas the eigenvectors of the eigenvalues that come from $Q$ are constant on the parts, so these eigenvectors are orthogonal.  Thus, these eigenvalues coming from the twins are different from the eigenvalues that are roots of the characteristic polynomial of $Q$.  Hence, we have accounted for all the eigenvalues.
 \end{proof}

\begin{lemma}\label{lem:1root}
 The Laplacian of a graph with diameter 3 and diameter 3 complement has at most one eigenvalue in the interval $(0,1)$ and at most eigenvalue in the interval $(n - 1, n)$.
 \end{lemma}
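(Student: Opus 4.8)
The plan is to prove both halves by a single argument applied to $G$ and to $G^{c}$, exploiting the complement relation $\lambda_i(G)+\lambda_{n+2-i}(G^c)=n$ quoted in the introduction. Under this relation an eigenvalue of $L(G)$ lying in $(n-1,n)$ corresponds exactly to an eigenvalue of $L(G^c)$ lying in $(0,1)$, so the number of $L(G)$-eigenvalues in $(n-1,n)$ equals the number of $L(G^c)$-eigenvalues in $(0,1)$. Since $G^c$ also has diameter $3$ (and its complement $G$ has diameter $3$), it therefore suffices to bound the number of eigenvalues in $(0,1)$, and the statement about $(n-1,n)$ will follow by applying that bound to $G^c$.

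First I would extract a structural consequence of the hypothesis on the complement. Because $\operatorname{diam}(G^c)=3$, there is a pair of vertices $u,w$ with $\dist_{G^c}(u,w)\ge 3$. Unwinding this in $G$: $u$ and $w$ are nonadjacent in $G^c$, hence adjacent in $G$, and they have no common neighbour in $G^c$, which means no vertex is nonadjacent in $G$ to both $u$ and $w$. Equivalently, $\{u,w\}$ is a dominating edge of $G$, in the sense that every other vertex is adjacent in $G$ to $u$ or to $w$.

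The heart of the argument is then a test-space estimate showing $L(G)$ has at most two eigenvalues below $1$. Recall $x^{T}L(G)x=\sum_{(i,j)\in E(G)}(x_i-x_j)^2$, and consider the codimension-$2$ subspace $T=\{x\in\R^{n}:x_u=x_w=0\}$. For $x\in T$ and each vertex $i\ne u,w$, domination provides an edge from $i$ to $u$ or to $w$, whose term in the sum equals $(x_i-0)^2=x_i^{2}$; summing these and discarding the remaining nonnegative edge terms gives $x^{T}L(G)x\ge\sum_{i\ne u,w}x_i^2=\sum_i x_i^2=x^{T}x$. Hence $x^{T}(L(G)-I)x\ge 0$ on the $(n-2)$-dimensional space $T$, so $L(G)-I$ has at most two negative eigenvalues; that is, at most two Laplacian eigenvalues of $G$ are less than $1$. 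As $G$ is connected (diameter $3$), $\lambda_1=0$ is simple and $\lambda_2>0$, so at most one eigenvalue lies in $(0,1)$.

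Finally, applying the same bound to $G^c$ (whose complement $G$ has diameter $3$, furnishing a dominating edge of $G^c$) shows $L(G^c)$ has at most one eigenvalue in $(0,1)$, and the complement relation converts this into the claim that $L(G)$ has at most one eigenvalue in $(n-1,n)$. The step I expect to require the most care is the translation of ``$\dist_{G^c}(u,w)\ge 3$'' into the dominating-edge property of $G$; once that combinatorial fact is in hand, the spectral estimate is a short Rayleigh-quotient computation, and the symmetry between $G$ and $G^c$ does the rest.
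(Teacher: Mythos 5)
Your proof is correct. The reduction and the combinatorial step match the paper exactly: like the paper, you use the complement symmetry $\lambda_i(G)+\lambda_{n+2-i}(G^c)=n$ to reduce the $(n-1,n)$ claim to the $(0,1)$ claim for $G^c$, and you extract a dominating edge of $G$ from a pair $u,w$ with $\dist_{G^c}(u,w)\geq 3$ by the same argument. Where you genuinely diverge is the spectral step. The paper uses the dominating edge to exhibit a spanning double star $T=G(0,i,j)$ inside $G$, invokes the explicit characteristic polynomial of Lemma \ref{lem:charpolyG_rij} with $r=0$, locates the roots of the resulting cubic by sign changes to get $\lambda_3(T)\geq 1$, and transfers this to $G$ via edge-insertion monotonicity (Lemma \ref{lem:insert}). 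You instead prove $\lambda_3(G)\geq 1$ directly: on the $(n-2)$-dimensional subspace $\{x:x_u=x_w=0\}$, domination gives for each other vertex $i$ a distinct edge to $\{u,w\}$ contributing $x_i^2$ to the quadratic form, so $x^TL(G)x\geq x^Tx$ there, whence $L(G)-I$ has at most two negative eigenvalues; simplicity of the eigenvalue $0$ (connectivity follows from finite diameter) then leaves at most one eigenvalue in $(0,1)$. Your Rayleigh-quotient argument is shorter and self-contained---it needs neither the characteristic-polynomial machinery nor the interlacing lemma, and it isolates the clean general fact that any graph with a dominating edge satisfies $\lambda_3\geq 1$. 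What the paper's route buys is extra information aligned with its broader aims: it shows the double star itself has \emph{exactly} one eigenvalue in $(0,1)$, tying the lemma to the family $G(r,i,j)$ that the paper conjectures to be extremal, and it reuses a lemma already proved. The two arguments are morally close---restricted to your test space, the quadratic form of the paper's spanning double star is exactly the identity on the remaining coordinates---so your proof can be read as evaluating the tree's quadratic form only where it is needed, bypassing the cubic entirely.
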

 \begin{proof}
 Let $G$ be a graph with diameter 3 such that $G^c$ has diameter 3. By the symmetry of the spectrum
 with respect to complementation, it suffices to demonstrate that there is at most one eigenvalue
 in the interval $(0, 1)$.
 
 We first claim that $G$ must have at least one edge $e$ with the property that any vertex of $G$ is adjacent to at least one endpoint $e$.  We will refer to such an edge as a ``dominating edge."  To see that $G$ must have such an edge, let $a,b$ be vertices with $\dist_{G^c}(a,b)=3$.  Let $c$ be any other vertex of the graph.  Then $c$ cannot be adjacent to both $a$ and $b$ in $G^c$ since their distance is 3.  Thus $c$ is adjacent to either $a$ or $b$ in $G$.  There is an edge between $a$ and $b$ in $G$, so this is the desired edge.
 
 Now we claim that $G$ must contain as a spanning subgraph the tree $T=G(0,i,j)$ for some $i,j$ with $i+j=n-2$.  We can construct $T$ as follows.  Let $ab$ be a dominating edge in $G$ and put $ab$ into $E(T)$.  Then for every other vertex $c$ of $G$, add either the edge $ac$ or $bc$ to $T$.  Then $T$ will be a spanning tree of $G$ of the claimed type.  
 
 Now plugging in $r=0$ and $j=n-2-i$ into Lemma \ref{lem:charpolyG_rij} and simplifying, we find that the characteristic polynomial of $T$ is
 \[
 x(x-1)^{n-4}(x^3-(n+2)x^2+(ni-2i-i^2+2n+1)x-n).
 \]
 Let $p_3(x)$ be the cubic factor in this polynomial.  Observe that $p_3(0)=-n<0$, $p_3(1)=i(n-2-i)>0$,  $p_3(n-i)=-i<0$, and $p_3(n)=ni(n-2-i)>0$, so the three roots of $p_3(x)$ belong to the intervals $(0,1)$, $(1,n-i)$, and $(n-i,n)$.  
 
 So $L(T)$ has exactly 1 root in $(0,1)$. Then $G$ can be obtained from $T$ by adding edges, so $\lambda_3(L(G))\geq \lambda_3(L(T))\geq1$ (see Lemma 13.6.1 of \cite{godsilroyle}).  Thus $L(G)$ has at most 1 root in $(0,1)$.
 \end{proof}
 
\begin{theorem}\label{thm:Gfamily}
Conjecture 1 is true for the family $G(r,i,j)$.
\end{theorem}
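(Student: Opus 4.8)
The plan is to combine the explicit spectral data from Lemma~\ref{lem:charpolyG_rij} with a single sign computation on the interval $(0,1)$. By that lemma the Laplacian spectrum of $G(r,i,j)$ consists of $0$, the value $1$ with multiplicity $i+j-2$, the value $2$ with multiplicity $r-1$, and the four roots of the quartic factor $q(x)$. I would first dispose of the degenerate cases: if $i=0$ or $j=0$ then $G(r,i,j)$ has diameter at most $2$ and the claim is Proposition~\ref{prop:diam2}, so I may assume $i,j\ge 1$ and hence $k=(i+j)/2\ge 1$. A direct evaluation gives $q(0)=r^2+ir+jr+4r+2i+2j+4>0$ and, after the cancellations, $q(1)=-ij<0$. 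Since $G(r,i,j)$ is connected, $0$ is a simple eigenvalue and every root of $q$ is positive, so $q$ has a root $\mu\in(0,1)$; as $\mu$ is the least quartic root and the remaining nonzero eigenvalues are $1$ and $2$, which exceed $\mu$, we conclude $\lambda_2(G(r,i,j))=\mu$. The theorem thus reduces to proving $\mu\ge f_n(k)$.

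The next step exploits how $q$ depends on the product $ij$. Collecting coefficients one sees $q(x)=q_0(x)+ij\,x(x-2)$, where $q_0$ is free of $ij$. Because $x(x-2)<0$ on $(0,1)$ and AM--GM gives $ij\le (i+j)^2/4$, the quartic $\tilde q(x)$ obtained by replacing $ij$ with $(i+j)^2/4$ satisfies $q(x)\ge \tilde q(x)$ for all $x\in(0,1)$. If $\tilde\mu$ is the smallest positive root of $\tilde q$, then $q\ge\tilde q>0$ on $(0,\tilde\mu)$, whence $\mu\ge\tilde\mu$; so it suffices to show $\tilde\mu\ge f_n(k)$. This reduces everything to the balanced (extremal) member of the family, matching the empirical observation that the balanced graph minimizes $\lambda_2$.

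The heart of the argument is that $\tilde q$ factors. Writing everything in terms of $n$ and $k$ (using $n-k+1=(i+j)/2+r+3$ and $n-2k=r+2$), I claim the polynomial identity
\[
\tilde q(x)=\bigl(x^2-(n-k+1)x+(n-2k)\bigr)\bigl(x^2-(n-k+1)x+n\bigr),
\]
verified by matching coefficients. Call the two quadratic factors $g$ and $g_2$. By definition $f_n(k)$ is exactly the smaller root $\beta$ of $g$, so $\tilde q(\beta)=0$; moreover $g(0)=n-2k>0$ and $g(1)=-k<0$ place $\beta$ in $(0,1)$, while the other root of $g$ exceeds $1$. The factor $g_2$ has the same linear term as $g$ but a strictly larger constant term ($n>n-2k$ since $k\ge1$), so either $g_2$ has no real roots or its smaller root strictly exceeds $\beta$. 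Hence $\beta$ is the smallest positive root of $\tilde q$, i.e.\ $\tilde\mu=\beta=f_n(k)$, and combining with the previous paragraph gives $\lambda_2(G(r,i,j))=\mu\ge\tilde\mu=f_n(k)$, as desired.

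The only genuinely creative step is discovering the factorization of $\tilde q$; once it is in hand every remaining step is a routine sign check or coefficient comparison, so I expect the main bookkeeping burden of the write-up to be verifying the displayed identity and the elementary comparison of the smaller roots of $g$ and $g_2$ that guarantees $\beta$, rather than a root of $g_2$, is the least positive root of $\tilde q$. The boundary cases cause no trouble: $ij=0$ reduces to diameter at most $2$, and when $r=0$ the entire argument lives on $(0,1)$, where the spurious factor $(x-2)$ of the quartic is irrelevant.
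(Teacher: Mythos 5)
Your proof is correct, and at the top level it is the same proof as the paper's: both reduce to the balanced case $i=j$ by exploiting that, for fixed $n$ and $r$, the quartic depends on $i,j$ beyond $i+j$ only through the term $ij\,x(x-2)$, and both finish with the identical factorization $\left(x^2-(n-k+1)x+(n-2k)\right)\left(x^2-(n-k+1)x+n\right)$, whose smaller root is $f_n(k)$. Where you genuinely diverge is in how the reduction is justified. The paper sets $i=q(n-r-2)$, $j=(1-q)(n-r-2)$, computes the $q$-derivative $(2q-1)(n-r-2)^2(2-x)x$, and argues that moving $q$ away from $1/2$ raises the polynomial on $(0,1)$ and hence moves the root to the right --- an argument that requires knowing there is exactly one root in $(0,1)$, crossed from positive to negative, which the paper imports from Lemma~\ref{lem:1root}. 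Your version --- $q(x)=q_0(x)+ij\,x(x-2)\ge\tilde q(x)$ on $(0,1)$ because $ij\le(i+j)^2/4$ and $x(x-2)<0$ there, so $q>0$ on $(0,\tilde\mu)$ and thus $\mu\ge\tilde\mu$ --- encodes the same monotonicity fact as a pointwise domination of polynomials, which needs only $\tilde q(0)>0$ and no uniqueness statement at all. This decouples Theorem~\ref{thm:Gfamily} from Lemma~\ref{lem:1root} (whose hypothesis, diameter $3$ with diameter-$3$ complement, the paper does not actually verify for $G(r,i,j)$), so your route is slightly more self-contained and rigorous. You are also more careful at the edges than the published proof: you dispose of $ij=0$ via Proposition~\ref{prop:diam2}, note that the spurious factor $(x-2)$ at $r=0$ is harmless on $(0,1)$, and explicitly check, via $g(0)=n-2k>0$, $g(1)=-k<0$, and the constant-term comparison $n>n-2k$, that the smaller root of the first quadratic factor really is the least positive root of $\tilde q$ --- a point the paper leaves as ``straightforward to verify.''
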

\begin{proof}
First we claim that for a fixed $n$ and $r$, the smallest root of the characteristic polynomial is minimized when $i$ and $j$ are taken to be equal to each other.  To see this, we will reparameterize the coefficients of the characteristic polynomial by setting $i=q(n-r-2)$ and $j=(1-q)(n-r-2)$ and we will let $q$ vary between $0$ and $1$.  Doing this, the irreducible quartic in Lemma \ref{lem:charpolyG_rij} becomes
\begin{align*}
x^4-&(n+r+4)x^3+(q(q-1)(n-r)(4-n+r)+rn-4q(q-1)+2r+4n+5)x^2\\&-(2q(q-1)(n-r)(4-n+r)+2rn+r+5n-8q(q-1)+2)x+n(r+2).
\end{align*}
Evaluating this polynomial at $x=0$ yields $n(r+2)>0$ and at $x=1$ yields $q(q-1)(n-r-2)^2<0$ for $0<q<1$. So by Lemma \ref{lem:1root}, there is exactly 1 root strictly between 0 and 1, and this root occurs as the function goes from positive to negative.  

Differentiating the above polynomial with respect to $q$ gives
\[
(2q-1)(n-r-2)^2(2-x)x.
\]
Thus for $x\in(0,1)$, we see that there is a minimum when $q=1/2$.  So changing $q$ away from $1/2$ increases the values of the polynomial, so since the root occurred as the function was decreasing, the root moves to the right as we move $q$ away from $1/2$.  Thus the smallest possible root of this quartic is achieved when $q=1/2$.

Thus, going back to the polynomial in the expression from Lemma \ref{lem:charpolyG_rij}, the smallest root will be lower bounded by the root of the corresponding polynomial when we take $i=j=(n-r-2)/2$ (note that this need not be an integer).  Note that this value is $k=|D(G)|/2$.  Observe that when we replace $i=j=k$, then the quartic in the characteristic polynomial of $Q$ factors as
\[
\left(x^2-(r+k+3)x+(r+2)\right)\left(x^2-(r+k+3)x+(r+2k+2)\right)
\]
which, recalling that $r=n-2k-2$, can be rewritten as 
\[
\left(x^2-(n-k+1)x+(n-2k)\right)\left(x^2-(n-k+1)x+n\right).
\]
It is then straightforward to verify that the smallest nonzero root of this polynomial is the expression $f_n(k)$ in (\ref{eq:conj_bound}).
\end{proof}

We remark that, not only have we proven that Conjecture \ref{conj:bound} is true for $G(r,i,j)$, but we have also shown that when $i=j$, equality in Conjecture \ref{conj:bound} holds.  Indeed, it was analyzing this family of graphs that provided the original motivation for Conjecture \ref{conj:bound}.  Computation of many graphs suggests that the family $G(r,i,j)$ is the family of graphs that minimizes $\lambda_2$ among all $n$ vertex graphs with a fixed number of eccentricity 3 vertices.

\subsection{The Family $\widehat G(r,i,j)$}

For $r\geq 1$ we will denote by $\widehat G(r,i,j)$ the graph obtained from $G(r,i,j)$ by deleting the ``dominating edge," that is, the edge between vertices $a$ and $b$.  See Figure \ref{fig:Ghat}.  In this section we will prove that Conjecture \ref{conj:bound} is true for all $\widehat G(r,i,j)$.

\begin{figure}
    \centering
    \begin{tikzpicture}
    \draw (-1,0)node{} (1,0)node{}--(0,1)node{}--(-1,0) (1,0)--(0,-1)node{}--(-1,0) (-1,0)--(-2,0)node{} (-1,0)--(-1.8,1)node{} (-1,0)--(-1.8,-1)node{} (1,0)--(1.8,.3)node{} (1,0)--(1.8,-.3)node{} (1,0)--(1.5,1)node{} (1,0)--(1.5,-1)node{};
    \end{tikzpicture}
    \caption{The graph $\widehat G(2,3,4)$}
    \label{fig:Ghat}
\end{figure}
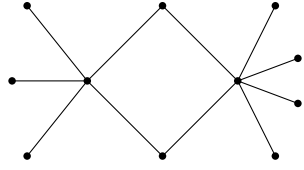

 \begin{lemma}\label{lem:charpolyGhat_rij}
 The characteristic polynomial of the Laplacian matrix of $\widehat G(r,i,j)$ is 
 \begin{multline*}
 x(x-2)^{r-1}(x-1)^{i+j-2}(x^4-(2r+i+j+4)x^3+(r^2+ir+jr+6r+ij+3i+3j+5)x^2\\-(2r^2+2ir+2jr+6r+2ij+2i+2j+2)x+r^2+ir+jr+2r).
 \end{multline*}
 \end{lemma}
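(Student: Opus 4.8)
The plan is to follow the proof of Lemma~\ref{lem:charpolyG_rij} almost verbatim, adjusting only for the removal of the dominating edge $ab$. First I would observe that $\widehat G(r,i,j)$ retains the same five-part equitable partition as $G(r,i,j)$: the singletons $\{a\}$ and $\{b\}$, the $r$ vertices adjacent to both $a$ and $b$, the $i$ vertices adjacent only to $a$, and the $j$ vertices adjacent only to $b$. Deleting the edge $ab$ changes only the adjacencies between the parts $\{a\}$ and $\{b\}$ and the degrees of $a$ and $b$, so the quotient Laplacian matrix becomes
\[
Q=\begin{bmatrix}
r+i & 0 & -r & -i & 0\\
0 & r+j & -r & 0 & -j\\
-1 & -1 & 2 & 0 & 0\\
-1 & 0 & 0 & 1 & 0\\
0 & -1 & 0 & 0 & 1
\end{bmatrix},
\]
which differs from the matrix $Q$ of Lemma~\ref{lem:charpolyG_rij} only in the two $(a,b)$ off-diagonal entries (now $0$ rather than $-1$) and in the diagonal entries of $a$ and $b$ (each reduced by $1$). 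By the equitable-partition lemma (Lemma~7.1.6 of \cite{cvetkovic}), every eigenvalue of $Q$ is an eigenvalue of $L(\widehat G(r,i,j))$, with eigenvector constant on the parts.

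Next I would compute the characteristic polynomial of $Q$ by direct expansion. Since $\widehat G(r,i,j)$ is connected for $r\ge 1$, the eigenvalue $0$ is simple, so $\det(xI-Q)$ factors as $x$ times a quartic, and the task is to verify that this quartic equals the quartic factor claimed in the statement. This determinant computation is the one genuinely laborious step, and I expect it to be the main obstacle: it is not conceptually hard, but the four nonconstant coefficients must be matched term by term. Two cheap consistency checks guide the bookkeeping. The trace of $Q$ is $(r+i)+(r+j)+2+1+1=2r+i+j+4$, which must equal the sum of the quartic's roots and hence forces the $x^3$ coefficient to be $-(2r+i+j+4)$, as claimed. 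The constant term factors as $r^2+ir+jr+2r=r(r+i+j+2)=rn$; combined with the twin factors below this yields product of nonzero eigenvalues $2^{r-1}\cdot rn$, so $\widehat G(r,i,j)$ has $2^{r-1}r$ spanning trees, exactly the spanning-tree count of $K_{2,r}$ (the pendant vertices are forced into any spanning tree and do not change the count).

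Finally I would locate the remaining eigenvalues exactly as in Lemma~\ref{lem:charpolyG_rij}. The $r$ common neighbors are mutually nonadjacent twins of common degree $2$, the $i$ pendant neighbors of $a$ are twins of degree $1$, and the $j$ pendant neighbors of $b$ are twins of degree $1$; so by Lemma~\ref{lem:twins} we obtain the eigenvalue $2$ with multiplicity at least $r-1$ and the eigenvalue $1$ with multiplicity at least $(i-1)+(j-1)=i+j-2$. As before, the twin eigenvectors sum to zero on each part while the eigenvectors coming from $Q$ are constant on the parts, so the two families are orthogonal and their multiplicities add. A degree count, $5+(r-1)+(i+j-2)=r+i+j+2=n$, then confirms that all $n$ eigenvalues have been accounted for, giving the stated factorization of the characteristic polynomial.
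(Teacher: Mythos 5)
Your proposal is correct and follows essentially the same route as the paper: the same five-part equitable partition with the identical quotient matrix $Q$, direct computation of its characteristic polynomial, and the twin-vertex argument (Lemma \ref{lem:twins}) with the orthogonality of part-constant versus part-summing-to-zero eigenvectors to account for all $n$ eigenvalues. Your added sanity checks (the trace matching the $x^3$ coefficient, and the constant term $rn$ yielding the spanning-tree count $2^{r-1}r$ of $K_{2,r}$ via the matrix-tree theorem) are correct and a nice touch, but do not change the substance of the argument.
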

 \begin{proof}

 Similar to the proof of Lemma \ref{lem:charpolyG_rij}, graph $\widehat G(r,i,j)$ has an equitable partition with five parts: vertices $a$ and $b$ are each singleton vertices in their own part, the $r$ vertices attached to both consist of a part, the $i$ vertices attached to just $a$ are a part, and the $j$ vertices attached to just $b$ are a part. The equitable partition matrix is
 \[
  Q=\begin{bmatrix}
 r+i&0&-r&-i&0\\0&r+j&-r&0&-j\\-1&-1&2&0&0\\-1&0&0&1&0\\0&-1&0&0&1
 \end{bmatrix}.
 \]
 Thus each eigenvalue of this matrix is an eigenvalue of the Laplacian of the graph.  The characteristic polynomial of $Q$ may be obtained by direct computation. This characteristic polynomial contains the factor $x$ and the quartic factor   \begin{multline*}
 x^4-(2r+i+j+4)x^3+(r^2+ir+jr+6r+ij+3i+3j+5)x^2\\-(2r^2+2ir+2jr+6r+2ij+2i+2j+2)x+r^2+ir+jr+2r.
 \end{multline*}
 
 The twin vertices work in exactly the same way as in the proof of Lemma \ref{lem:charpolyG_rij}, giving the remaining factors.
 \end{proof}
 
 \begin{theorem}\label{thm:Ghatfamily}
Conjecture 1 is true for the family $\widehat G(r,i,j)$.
\end{theorem}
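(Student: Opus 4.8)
The plan is to follow the strategy of the proof of Theorem~\ref{thm:Gfamily}, working with the quartic factor of the characteristic polynomial supplied by Lemma~\ref{lem:charpolyGhat_rij}; call it $P$, regarded as a function of $x$. The first task is the bookkeeping on $D(\widehat G(r,i,j))$. When $i,j\ge 1$ and $r\ge 1$ one checks from the distances in Figure~\ref{fig:Ghat} that $a$ and $b$ have eccentricity $3$ (e.g.\ $a$ reaches a $b$-leaf only via $a$--$c$--$b$--leaf) while the $r$ common neighbors have eccentricity $2$, so $|D|=i+j+2$ and hence $k=\tfrac{i+j}{2}+1$; since $n=i+j+r+2$ this gives the clean identity $r=n-2k$. (The degenerate cases must be split off: $i=j=0$ yields $\widehat G(r,0,0)=K_{2,r}$, which has diameter $2$ and is covered by Proposition~\ref{prop:diam2}, while the cases in which exactly one of $i,j$ vanishes give a diameter-$3$ graph with a \emph{different} value of $|D|$, and will be verified by a direct computation analogous to the one below.)

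For the main case I would fix $r$ and the sum $s=i+j=n-r-2$ — equivalently fix $n$ and $k$ — and reparametrize $i=qs$, $j=(1-q)s$ with $q\in[0,1]$, exactly as in Theorem~\ref{thm:Gfamily}. Inspecting Lemma~\ref{lem:charpolyGhat_rij}, the coefficients of $P$ depend on $q$ only through the product $ij=q(1-q)s^2$, and this term enters $P$ precisely as $ij\,(x^2-2x)$. Consequently
\[
\frac{\partial P}{\partial q}=(1-2q)\,s^2\,x(x-2),\qquad \frac{\partial^2 P}{\partial q^2}=2s^2\,x(2-x),
\]
which mirrors the derivative $(2q-1)(n-r-2)^2(2-x)x$ appearing in Theorem~\ref{thm:Gfamily}. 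For $x\in(0,2)$ the second derivative is positive, so for each fixed such $x$ the value $P(x)$ is a convex function of $q$ minimized at $q=\tfrac12$; that is, $P(x)\ge P(x)|_{q=1/2}$ for all $q$. This reduces the entire problem to the symmetric graph $i=j$.

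At $q=\tfrac12$ (so $i=j=k-1$ and $r=n-2k$) I expect $P$ to factor as
\[
P|_{q=1/2}=\bigl(x^2-(n-k)x+(n-2k)\bigr)\bigl(x^2-(n-k+2)x+n\bigr),
\]
which I would verify by expanding and matching coefficients against Lemma~\ref{lem:charpolyGhat_rij}. Writing $g(t)=\tfrac12\bigl(t-\sqrt{t^2-4(n-2k)}\bigr)$, the smaller root of the first quadratic is $g(n-k)$, and a short estimate — which reduces to the inequality $(1-k)(n-2k)\le 0$, valid since $k\ge 2$ and $n-2k=r\ge 1$ in this case — shows that the first quadratic supplies the smaller of the two least roots, so $g(n-k)$ is the smallest positive root of $P|_{q=1/2}$. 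Since $f_n(k)=g(n-k+1)$ and $g$ is strictly decreasing on its domain (as $t/\sqrt{t^2-4(n-2k)}>1$), we obtain $g(n-k)>f_n(k)$, whence $P|_{q=1/2}>0$ on $(0,f_n(k)]$.

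Combining the two steps finishes the argument: for any $q$ and any $x\in(0,f_n(k)]\subset(0,2)$ we have $P(x)\ge P(x)|_{q=1/2}>0$, and since $P(0)=r^2+ir+jr+2r=rn>0$, the smallest positive root of $P$ exceeds $f_n(k)$. The remaining Laplacian eigenvalues of $\widehat G(r,i,j)$ are $1$ and $2$ (from the twin factors), both at least $f_n(k)$, so $\lambda_2(\widehat G(r,i,j))\ge f_n(k)$, as required. I expect the main obstacle to be twofold: first, correctly determining $|D|$ and the value of $k$ in the boundary cases $i=0$ or $j=0$, where $b$ (resp.\ $a$) drops out of $D$, the target bound changes, and the clean reparametrization is unavailable, forcing a separate direct verification; and second, confirming that the smaller least root genuinely comes from the first quadratic factor rather than the second, which is the one comparison in the proof that is not purely formal.
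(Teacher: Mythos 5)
Your proposal is correct and takes essentially the same route as the paper: the quartic from Lemma~\ref{lem:charpolyGhat_rij}, the reparameterization $i=qs$, $j=(1-q)s$ with derivative $(1-2q)s^2x(x-2)$, reduction to the symmetric case $q=1/2$, the same factorization $\bigl(x^2-(n-k)x+(n-2k)\bigr)\bigl(x^2-(n-k+2)x+n\bigr)$ (matching the paper's after the shift $k\mapsto k+1$ in notation), and a final monotonicity comparison with $f_n$. Two local touches of yours are actually slight improvements on the published argument: the convexity-in-$q$ bound $P(x)\ge P(x)\vert_{q=1/2}$ on $(0,2)$ replaces the paper's trace-based count of roots in $(0,1)$ together with its root-tracking step, and you correctly flag the boundary cases $i=0$ or $j=0$ (where the eccentricity count $|D|=i+j+2$ fails and $k$ changes), a degeneracy the paper's proof silently assumes away, though you defer that verification rather than carry it out.
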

\begin{proof}
We proceed as in the proof of Theorem \ref{thm:Gfamily}. We claim that for a fixed $n$ and $r$, the smallest root of the characteristic polynomial is minimized when $i$ and $j$ are taken to be equal to each other.  To see this, we will reparameterize the coefficients of the characteristic polynomial by setting $i=q(n-r-2)$ and $j=(1-q)(n-r-2)$ and we will let $q$ vary between $0$ and $1$.  Doing this, the irreducible quartic in Lemma \ref{lem:charpolyGhat_rij} becomes
\begin{align*}
x^4-&(n+r+2)x^3-(q(q-1)(n-r)^2+4q(q-1)(n+r-1)+r(n-1)-3n+1)x^2\\&-(2q(q-1)(n-r-2)^2-2n(r-1)+2)x+nr.
\end{align*}
Evaluating this polynomial at $x=0$ yields $nr>0$ and at $x=1$ yields $q(q-1)(n-r-2)^2<0$ for $0<q<1$. So there is a root between $0$ and $1$.  If there were 3 roots between $0$ and $1$, since the fourth root cannot be larger than $n$ (since it is an eigenvalue of a Laplacian on $n$ vertices), then since $r\geq1$, the sum of the eigenvalues of $Q$ could not reach $n+r+2$, which is the trace of $Q$.  Thus for all $q$, there is exactly one root between $0$ and $1$, and this root occurs as the function goes from positive to negative.  

Differentiating the above polynomial with respect to $q$ gives
\[
(2q-1)(n-r-2)^2(2-x)x,
\]
thus for $x\in(0,1)$, we see that there is a minimum when $q=1/2$.  So changing $q$ away from $1/2$ increases the values of the polynomial, so since the root occurred as the function was decreasing, the root moves to the right as we move $q$ away from $1/2$.  Thus the smallest possible root of this quartic is achieved when $q=1/2$.

Thus, going back to the polynomial in the expression from Lemma \ref{lem:charpolyGhat_rij}, the smallest root will be bounded below by the root of the corresponding polynomial when we take $i=j=(n-r-2)/2$ (note that this need not be an integer). Observe that when we replace $i=j=k$, then the quartic in the characteristic polynomial of $Q$ factors as
\[
\left(x^2-(r+k+1)x+r\right)\left(x^2-(r+k+3)x+(r+2k+2)\right)
\]
which, recalling that $r=n-2k-2$, can be rewritten as 
\[
\left(x^2-(n-k-1)x+(n-2k-2)\right)\left(x^2-(n-k+1)x+n\right).
\]
We thus see that $\lambda_2$ for this family of graphs is given by 
\[
\frac{n-k-1-\sqrt{(n-k-1)^2-4(n-2k-2)}}{2}.
\]

Note that all vertices of $\widehat G(r,i,j)$ have eccentricity 3 or more except the $r$ vertices connected to the two original vertices.  Thus in the context above, there are $2k+2$ vertices of eccentricity 3 or more.  Thus Conjecture \ref{conj:bound} is simply claiming that $f_n(k+1)\leq\frac{n-k-1-\sqrt{(n-k-1)^2-4(n-2k-2)}}{2}.$  A direct computation verifies that this is true.
\end{proof}

We remark that empirical computation on diameter $4$ graphs on small numbers of vertices suggests that the family $\widehat G(r,i,j)$ is the family of graphs that minimizes $\lambda_2$ over all diameter $4$ graphs with a fixed number of vertices at eccentricity $3$ or more.

\subsection{Trees}
In this section, we prove our bound holds for all trees.
\begin{theorem}
Conjecture \ref{conj:bound} is true for all trees.
\end{theorem}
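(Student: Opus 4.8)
The plan is to split the argument according to the diameter of the tree $T$, since only diameters $3$ and $4$ require real work. If $\operatorname{diam}(T)\le 2$ then $T$ is a star, so $k=0$ and the bound $\lambda_2(T)\ge 1$ is Proposition~\ref{prop:diam2}; if $\operatorname{diam}(T)\ge 5$ then every vertex has eccentricity at least $3$, so $k=n/2$ and the bound is the trivial one of Proposition~\ref{prop:diam5}. A tree of diameter $3$ is precisely a double star, that is $G(0,i,j)$ with $i,j\ge 1$, so the conjecture for these trees is already contained in Theorem~\ref{thm:Gfamily}. Thus the entire problem reduces to trees of diameter $4$.

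For a tree $T$ of diameter $4$ the first step is to compute $|D(T)|$. Such a tree has a unique center $v$ (the center of a tree of even diameter is a single vertex), $v$ has eccentricity $2$, and every other vertex has eccentricity at least $3$; hence $|D(T)|=n-1$ and $k=(n-1)/2$, so the target is the single value $\lambda_2(T)\ge f_n\!\left(\tfrac{n-1}{2}\right)$, the same for every such $T$. The structure of $T$ is also transparent: every vertex lies within distance $2$ of $v$, so $T$ consists of $v$, some branch vertices $w_1,\dots,w_q$ (each carrying at least one leaf, with $q\ge 2$ forced by the diameter), and possibly some leaves attached directly to $v$.

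The main step is to compare $\lambda_2(T)$ with the values attained on the double brooms $\widehat G(1,i,j)$, where the conjecture is already known. Each $\widehat G(1,i,j)$ has exactly $n-1$ vertices of eccentricity at least $3$, so Theorem~\ref{thm:Ghatfamily} gives $\lambda_2(\widehat G(1,i,j))\ge f_n\!\left(\tfrac{n-1}{2}\right)$ for every admissible pair $i,j$. It therefore suffices to transform an arbitrary diameter-$4$ tree into some $\widehat G(1,i,j)$ by a sequence of leaf relocations, each of which does not increase $\lambda_2$. The relocations I would use are: (i) detach a leaf hanging directly off the center $v$ and reattach it to a branch vertex; and (ii) when $q\ge 3$, strip a branch vertex of all its leaves by moving them onto another branch vertex, so that it degenerates to a direct leaf of $v$, which is then removed by (i). Iterating drives $T$ to a center of degree $2$ carrying two brooms, which is exactly a $\widehat G(1,i,j)$, and since $\lambda_2$ only decreased along the way, Theorem~\ref{thm:Ghatfamily} finishes the argument.

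The main obstacle is proving the monotonicity of $\lambda_2$ under these relocations, namely that pushing leaf-mass outward and consolidating it onto fewer branch vertices never raises the algebraic connectivity. I would establish this through the sign and monotonicity structure of a Fiedler vector of a tree (Fiedler's characteristic-vertex theory): fixing a Fiedler vector $f$ and locating the characteristic vertex relative to $v$, one compares the Rayleigh quotient before and after a move, using that $f$ is monotone along paths away from the characteristic vertex to fix the sign of the change. An equivalent and perhaps cleaner route is through the bottleneck (inverse-Laplacian) matrices $M$ of the Perron components at the characteristic vertex, where $\lambda_2=1/\rho(M)$: each relocation enlarges the relevant bottleneck matrix entrywise and hence increases $\rho(M)$, lowering $\lambda_2$. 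Making these monotonicity statements precise, and verifying that each relocation keeps the diameter equal to $4$ so that the comparison stays within the class where $|D|=n-1$, is where the care is required; once that is in hand, the reduction to Theorem~\ref{thm:Ghatfamily} is immediate.
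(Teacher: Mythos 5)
Your skeleton is exactly the paper's: Propositions~\ref{prop:diam2} and~\ref{prop:diam5} dispose of diameters at most $2$ and at least $5$; diameter-$3$ trees are the double stars $G(0,i,j)$, handled by Theorem~\ref{thm:Gfamily}; and the diameter-$4$ case is settled by comparison with the double brooms $\widehat G(1,i,j)$ via Theorem~\ref{thm:Ghatfamily}. Your observation that a diameter-$4$ tree has a unique center of eccentricity $2$ and all other vertices of eccentricity at least $3$, so that $|D(T)|=n-1$ and the target is the single value $f_n\!\left(\frac{n-1}{2}\right)$ --- which is exactly what Theorem~\ref{thm:Ghatfamily} yields for $r=1$ --- is correct, and it usefully makes explicit a step the paper leaves implicit. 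The divergence is in how the key inequality $\lambda_2(T)\ge\lambda_2(\widehat G(1,i,j))$ is justified: the paper does not prove that the double brooms minimize $\lambda_2$ over diameter-$4$ trees, it cites Theorem~3.2 of \cite{fallat1998extremizing}, where precisely this extremal characterization is established. You propose instead to re-derive that result by leaf relocations, and this is where your argument has a genuine gap.

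The two monotonicity claims --- that moving a leaf from the center to a branch vertex, and consolidating the leaves of one branch onto another, never increase $\lambda_2$ --- constitute the entire content of the diameter-$4$ case, and as written they are a plan rather than a proof; you acknowledge this yourself. Neither claim is automatic. In the Rayleigh-quotient route, the Fiedler vector of $T$ is not a legal test vector for the relocated tree without repair (reassigning the relocated leaf's entry destroys orthogonality to the all-ones vector, and restoring orthogonality changes the norm), and the sign of the resulting correction depends on where the characteristic vertex or edge sits relative to the \emph{receiving} branch: grafting lemmas of this type hold when mass is shifted away from the characteristic vertex, while your move (i) as stated permits attaching the leaf to an arbitrary branch, including one containing the characteristic vertex, where the inequality is not guaranteed and the choice of branch must be controlled. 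In the bottleneck-matrix route, the statement that ``each relocation enlarges the relevant bottleneck matrix entrywise'' is a claim requiring proof, not an observation: a relocation changes the component structure at the characteristic vertex, can change which component is the Perron component, and can move the characteristic vertex itself, so the two Perron values being compared are not spectral radii of matrices related by entrywise domination in any evident way. Carrying this perturbation analysis out correctly amounts to re-proving Theorem~3.2 of \cite{fallat1998extremizing}; until you either do that in full or simply cite the result as the paper does, the diameter-$4$ case remains open in your write-up.
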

\begin{proof}
By Propositions \ref{prop:diam2} and \ref{prop:diam5}, we need only consider trees of diameter 3 and 4. Observe that any tree of diameter 3 is simply $G(0,i,j)$ for some $i,j$, where the family $G(r,i,j)$ is defined previously.  By Theorem \ref{thm:Gfamily}, the conjecture holds for all of these.  In Theorem 3.2 of \cite{fallat1998extremizing}, it is proven that the diameter 4 trees on $n$ vertices that minimize $\lambda_2$ are exactly the trees $\widehat G(1,\lceil\frac{n-4}{2}\rceil,\lfloor\frac{n-4}{2}\rfloor)$ with $n$ vertices, where $\widehat G$ is the family defined in the previous section. Thus by Theorem \ref{thm:Ghatfamily} we are done.

\end{proof}

\subsection{Inserting edges}
\begin{lemma}[Lemma 13.6.1 of \cite{godsilroyle}]\label{lem:insert}
If $G'$ is obtained from $G$ by inserting an edge, then \[\lambda_2(L_{G'})\geq \lambda_2(L_G).\]
\end{lemma}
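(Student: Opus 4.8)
The plan is to exhibit the single inserted edge as a positive semidefinite rank-one perturbation of the Laplacian and then to invoke the variational characterization of $\lambda_2$. First I would record how the two Laplacians are related. If $G'$ is obtained from $G$ by inserting the edge $\{u,v\}$ on the same vertex set $\{1,\dots,n\}$, then
\[
L_{G'} = L_G + (e_u - e_v)(e_u - e_v)^\top,
\]
where $e_u,e_v$ denote the standard basis vectors of $\R^n$. The added rank-one matrix $(e_u-e_v)(e_u-e_v)^\top$ is positive semidefinite, since for any $x\in\R^n$ its quadratic form is $(x_u - x_v)^2\geq 0$. This is the one structural fact that makes edge insertion monotone.

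Next I would recall the Rayleigh-quotient description of the algebraic connectivity. Because the all-ones vector $\mathbf{1}$ is an eigenvector of every graph Laplacian with the smallest eigenvalue $\lambda_1 = 0$, restricting the Rayleigh quotient to its orthogonal complement isolates the next eigenvalue:
\[
\lambda_2(L_H) = \min_{\substack{x \perp \mathbf{1} \\ x \neq 0}} \frac{x^\top L_H x}{x^\top x},
\]
which is valid for any graph $H$ on the vertex set, whether or not it is connected. The crucial observation is that inserting an edge does not change the vertex set, so the constraint $x\perp\mathbf{1}$ is literally the same constraint for $L_G$ and $L_{G'}$, and both Laplacians share $\mathbf{1}$ as their bottom eigenvector.

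Combining the two observations finishes the argument. For every $x\perp\mathbf{1}$ with $x\neq 0$,
\[
\frac{x^\top L_{G'} x}{x^\top x} = \frac{x^\top L_G x}{x^\top x} + \frac{(x_u - x_v)^2}{x^\top x} \geq \frac{x^\top L_G x}{x^\top x} \geq \lambda_2(L_G).
\]
Taking the minimum of the left-hand side over all admissible $x$ yields $\lambda_2(L_{G'}) \geq \lambda_2(L_G)$, which is the claim.

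I do not anticipate a genuine obstacle here; the only points requiring care are (i) confirming that $\mathbf{1}$ remains the common bottom eigenvector of both Laplacians, which holds precisely because edge insertion preserves the vertex set, and (ii) the sign of the perturbation term $(x_u-x_v)^2$. An alternative route, which would give the slightly stronger statement that \emph{every} eigenvalue is nondecreasing, namely $\lambda_i(L_{G'})\geq\lambda_i(L_G)$ for all $i$, would replace the single-constraint Rayleigh quotient by the full Courant--Fischer min-max and appeal to Weyl's monotonicity theorem for positive semidefinite perturbations. For the stated conclusion about $\lambda_2$, however, the one-line Rayleigh-quotient comparison above is the most economical path.
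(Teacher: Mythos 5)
Your proof is correct; the paper gives no proof of this lemma at all, simply citing Lemma 13.6.1 of Godsil--Royle, and your argument --- writing $L_{G'}=L_G+(e_u-e_v)(e_u-e_v)^\top$, noting the perturbation is positive semidefinite, and comparing Rayleigh quotients over $x\perp\mathbf{1}$ (valid since $\mathbf{1}$ is a bottom eigenvector of both Laplacians regardless of connectivity) --- is exactly the standard proof behind that citation. Nothing further is needed.
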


%We remark that by the previous lemma, 
It follows that inserting an edge to a graph can only increase the algebraic connectivity.  Thus, if we know Conjecture \ref{conj:bound} is true for a graph $G$, then we automatically know it is true for any graph $G'$ obtained by inserting any edges that leave the number of vertices with eccentricity 3 or more unchanged.

For example, the graph in Figure \ref{fig:grijadded} was constructed by starting with $G(2,3,4)$ and inserting several edges that do not alter the eccentricity.  Thus we know that Conjecture \ref{conj:bound} is true for such a graph.  Indeed, in any $G(r,i,j)$, any edge within any of the parts of the equitable partition identified in the proof of Lemma \ref{lem:charpolyG_rij} do not alter any eccentricities, so inserting any number of these edges yields a graph for which Conjecture \ref{conj:bound} is true.  Several edges across these parts may also leave eccentricities unchanged.  Similar comments apply to the family $\widehat G(r,i,j)$.  Thus we have verified Conjecture \ref{conj:bound} for a very large collection of graphs.
\begin{figure}[h!]
    \centering
    \begin{tikzpicture}
    \draw (-1,0)node{}--(1,0)node{}--(0,1)node{}--(-1,0) (1,0)--(0,2)node{}--(-1,0) (-1,0)--(-2,0)node{} (-1,0)--(-1.8,1)node{} (-1,0)--(-1.8,-1)node{} (1,0)--(1.8,.3)node{} (1,0)--(1.8,-.3)node{} (1,0)--(1.5,1)node{} (1,0)--(1.5,-1)node{};
    \draw[color=red] (-2,0)--(-1.8,-1) (-2,0)--(-1.8,1) (-1.8,1)--(0,2) (0,1)--(0,2)
    (1.8,.3)--(0,1) (1.5,1)--(0,1) (1.8,.3)--(1.8,-.3)--(1.5,-1)--(1.8,.3);
    \end{tikzpicture}
    \caption{The graph $G(2,3,4)$ with several edges inserted.  No inserted edge has altered the eccentricity of any vertex.}
    \label{fig:grijadded}
\end{figure}
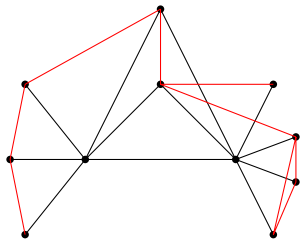

\section{Further remarks and conjectures}\label{sec:conjs}

We devote this final section to various conjectures based on empirical observations as
shown in Figure \ref{fig:curves}, which reproduces the unit square $[0, 1] \times [0, 1]$
from Figure \ref{fig:curve} with some additional curves.
From lowest to highest, the plots in Figure \ref{fig:curves} are:

        \textbullet$\mspace{10mu}$Dotted (black), representing the line $\lambda_2(G)+\lambda_2(G^c) = 1$.
        
        \textbullet$\mspace{10mu}$Dash-dot (blue), a conjectured lower bound for graphs weighted
                                       from the interval $[0, 1]$.
                                       
        \textbullet$\mspace{10mu}$Solid (red), containing all ordered pairs $(x, y)$ where $x$ is the
                                       bound from  Conjecture~1, and $y$ is the complementary bound
                                       (with $k$ replaced by $(n-2k)/2$).
                                       
        \textbullet$\mspace{10mu}$Dashed (green), an empirically observed algebraic lower bound.

\begin{figure}[H]
    \captionsetup{singlelinecheck=off, format=hang}
    \centering
    \includegraphics[width=2.48in]{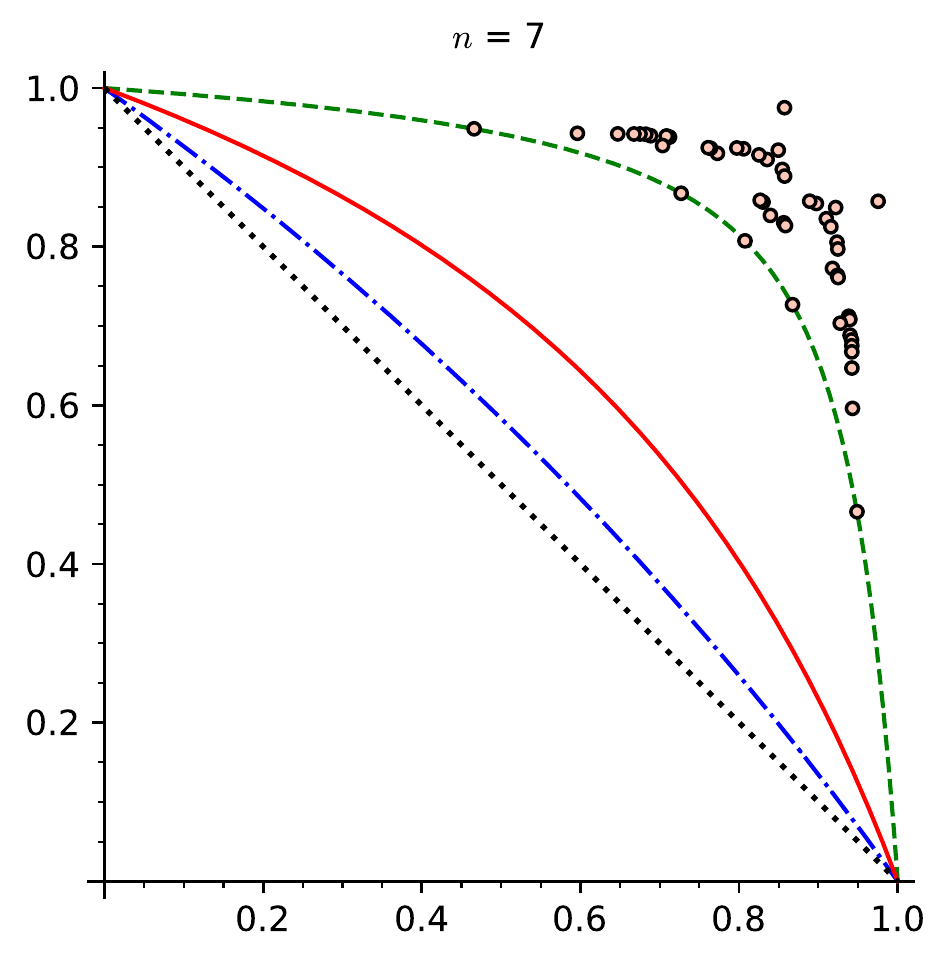}
    \caption[Points and four curves for $n = 7$]%
    {The same plot as Figure~\ref{fig:curve}, with two additional conjectured inequalities.
    }
    \label{fig:curves}
\end{figure}

\subsection{Dandelions}

\begin{definition}
The \emph{dandelion graph} on $n$ vertices is the graph $G(0,1,n-3)$ from Section \ref{sec:G}.  See Figure \ref{fig:dandelion}.
\end{definition}

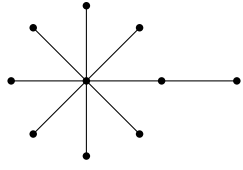
\begin{figure}
    \centering
    \begin{tikzpicture}
    \draw (-1,0)node{}--(0,0)node{}--(1,0)node{} (-1,0)--(-2,0)node{} (-1,0)--(-1.707,.707)node{} (-1,0)--(-1.707,-.707)node{} (-1,0)--(-1,1)node{} (-1,0)--(-1,-1)node{} (-1,0)--(-.2929,.707)node{} (-1,0)--(-.2929,-.707)node{} ;
    \end{tikzpicture}
    \caption{The dandelion on $10$ vertices.}
    \label{fig:dandelion}
\end{figure}

We have observed that among all graphs of diameter 3 and complement diameter 3, the dandelion on $n$ vertices and its complement represent the points on the dashed (green) curve of Figure \ref{fig:curves} closest to the corners.  We note that adding edges between the pendent twins leaves $\lambda_2$ unchanged.  Thus these graphs are of interest as for any fixed $n$, they appear to minimize $\lambda_2(G)+\lambda_2(G^c)$ over graphs of diameter 3.

\begin{conjecture}
The dandelion graph on $n$ vertices (and its complement) minimize $\lambda_2(G)+\lambda_2(G^C)$ over all graphs $G$ on $n$ vertices with diameter 3 and diameter 3 complement.
%have the largest spread of all diameter 3 graphs on $n$ vertices.
\end{conjecture}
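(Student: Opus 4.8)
The plan is to first reformulate the target quantity in terms of the Laplacian spread, for which the class in question is especially well-behaved. Since $\lambda_i(G)+\lambda_{n+2-i}(G^c)=n$ for $i=2,\dots,n$, taking $i=n$ gives $\lambda_2(G^c)=n-\lambda_n(G)$, so that
\[
\lambda_2(G)+\lambda_2(G^c)=n-\bigl(\lambda_n(G)-\lambda_2(G)\bigr).
\]
Hence, over the class $\mathcal{D}$ of graphs with both $G$ and $G^c$ of diameter $3$, minimizing $\lambda_2(G)+\lambda_2(G^c)$ is exactly equivalent to maximizing the Laplacian spread $\lambda_n(G)-\lambda_2(G)$. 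The spread is complement-invariant, which is consistent with the conjecture asserting that the dandelion \emph{and} its complement are simultaneously extremal, so I would restate the goal as: the dandelion maximizes the Laplacian spread over $\mathcal{D}$.

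Second, I would pin down the target value. The dandelion is $G(0,1,n-3)$, so by Lemma~\ref{lem:charpolyG_rij} (equivalently the tree computation inside the proof of Lemma~\ref{lem:1root} with $i=1$) its nonzero spectrum consists of $1$ with multiplicity $n-4$ together with the three roots of $x^3-(n+2)x^2+(3n-2)x-n$. By Lemma~\ref{lem:1root} the smallest of these is $\lambda_2\in(0,1)$ and the largest is $\lambda_n\in(n-1,n)$, and their difference is the candidate maximal spread $s_n$; the content of the conjecture is that no graph in $\mathcal{D}$ exceeds $s_n$.

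Third, to bound the spread from above on $\mathcal{D}$ I would exploit that $\lambda_n$ is large only when the maximum degree is large. Since $\lambda_n(G)\geq\Delta(G)+1$, while $\lambda_n(G)=n$ would force $G^c$ disconnected (impossible here, as $G^c$ has diameter $3$), any near-optimal graph should attain $\Delta(G)=n-2$: a vertex $b$ adjacent to all but one vertex $p$. This is a severe structural restriction. I would try to show that a spread-maximizer must achieve $\Delta=n-2$, then classify the diameter-$3$/diameter-$3$-complement graphs carrying such a vertex and argue within that restricted family that the spread is maximized by the dandelion, reusing the explicit quartic/cubic factorizations from the proofs of Theorems~\ref{thm:Gfamily} and~\ref{thm:Ghatfamily} together with the already-observed fact that inserting edges inside the independent set of pendants leaves $\lambda_2$ unchanged.

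The hard part, and the reason this remains only a conjecture, is that the spread is a difference of two eigenvalues that respond in the \emph{same} direction to edge insertion: by Lemma~\ref{lem:insert} adding an edge raises $\lambda_2$, and it typically raises $\lambda_n$ as well, so there is no monotone operation that provably increases $\lambda_n-\lambda_2$. Worse, the constraint defining $\mathcal{D}$ is not preserved by arbitrary edge moves, since inserting an edge can drop the diameter of $G^c$ below $3$ and deleting one can drop the diameter of $G$, so the usual extremal-graph strategy of pushing toward a canonical form does not apply directly. A successful proof would require transformation lemmas tailored simultaneously to the spread and to the class $\mathcal{D}$ — for instance, relocating pendant vertices between the two hubs, or Kelmans-type moves, that keep both diameters equal to $3$ while provably not decreasing $\lambda_n-\lambda_2$ — together with a rigidity analysis of the Fiedler and top eigenvectors at a maximizer to show that the process terminates precisely at $G(0,1,n-3)$. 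Constructing and controlling these moves, and in particular verifying their effect on both extreme eigenvalues at once, is where I expect the essential difficulty to lie.
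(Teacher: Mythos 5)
You were asked to prove a statement that the paper itself does not prove: this is Conjecture~2, and the paper's only support for it is empirical observation together with a computation of the dandelion's spectrum. Specifically, the paper computes the characteristic polynomial $x(x-1)^{n-4}(x^3-(n+2)x^2+(3n-2)x-n)$ (Lemma~\ref{lem:dandpoly}), localizes the extreme roots via the four evaluations in Lemma~\ref{lem:4equations}, and concludes $3-\phi-1/n<\lambda_2(G)+\lambda_2(G^c)<3-\phi+1/n$ for the dandelion --- it never attempts the minimality claim. So there is no paper proof to compare against, and your submission, which is candidly a research program rather than a proof, is the honest response. Your preliminary reductions are correct and match the paper's machinery: $\lambda_2(G^c)=n-\lambda_n(G)$ turns the problem into maximizing the complement-invariant spread over the class, and your identification of the dandelion's nonzero spectrum agrees with Lemma~\ref{lem:dandpoly} (indeed Lemma~\ref{lem:4equations} pins the roots more sharply than the intervals $(0,1)$ and $(n-1,n)$ you cite from Lemma~\ref{lem:1root}).

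The one substantive structural claim you make is the step that would fail: from $\lambda_n(G)\geq\Delta(G)+1$ you infer that a spread-maximizer ``must attain $\Delta(G)=n-2$.'' That inequality only bounds $\lambda_n$ from \emph{below}, so it cannot exclude maximizers of small maximum degree; to force large $\Delta$ you would need a complementary upper bound such as $\lambda_n\leq\max_{uv\in E}(d(u)+d(v))$, and since a dandelion-beating graph need only have $\lambda_n\gtrsim n-3+\phi$, that bound yields only $\Delta\gtrsim(n-2)/2$, nowhere near $\Delta=n-2$. Moreover, even granting $\Delta=n-2$, the resulting class is far larger than $G(r,i,j)$ and $\widehat G(r,i,j)$, and the paper's Theorems~\ref{thm:Gfamily} and~\ref{thm:Ghatfamily} control only $\lambda_2$, not the spread. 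You correctly diagnose the core obstruction --- Lemma~\ref{lem:insert} is one-sided, edge moves do not preserve the diameter-$3$/diameter-$3$ constraint, and no transformation lemmas monotone in $\lambda_n-\lambda_2$ are known --- which is precisely why the statement remains open in the paper as well.
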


In this section, we find nearly tight bounds on $\lambda_2(G)+\lambda_2(G^c)$ for these graphs. From Lemma \ref{lem:charpolyG_rij} we immediately have the following lemma.
\begin{lemma}\label{lem:dandpoly}
Let $G$ be the dandelion graph with $n$ vertices.  Then the characteristic polynomial of $G$ %the dandelion graph 
is $x(x-1)^{n-4}(x^3-(n+2)x^2+(3n-2)x -n)$.
\end{lemma}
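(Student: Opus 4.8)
The plan is to specialize Lemma~\ref{lem:charpolyG_rij} to the parameters $(r,i,j)=(0,1,n-3)$, since the dandelion on $n$ vertices is by definition the graph $G(0,1,n-3)$. The effect on the non-quartic factors is immediate: the exponent $i+j-2 = 1+(n-3)-2 = n-4$ produces the factor $(x-1)^{n-4}$, matching the claim. The one point requiring care is that the factor $(x-2)^{r-1}$ degenerates to the formal expression $(x-2)^{-1}$ when $r=0$, so I must check that the quartic factor compensates by having a root at $x=2$.

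First I would substitute $r=0$, $i=1$, $j=n-3$ into the quartic of Lemma~\ref{lem:charpolyG_rij}. Computing the four coefficients term by term collapses the quartic to
\[
x^4 - (n+4)x^3 + (5n+2)x^2 - (7n-4)x + 2n.
\]
Next I would verify directly that $x=2$ is a root: substituting gives $16 - 8(n+4) + 4(5n+2) - 2(7n-4) + 2n = 0$. Dividing out the factor $(x-2)$ then leaves the cubic $x^3 - (n+2)x^2 + (3n-2)x - n$. Assembling the pieces, the $(x-2)$ extracted from the quartic cancels the formal $(x-2)^{r-1}\big|_{r=0}$, and the surviving factors are exactly $x$, $(x-1)^{n-4}$, and the stated cubic.

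I expect no genuine obstacle here; the argument is a routine substitution followed by a single polynomial division, which is precisely why the lemma can be asserted ``immediately'' from Lemma~\ref{lem:charpolyG_rij}. The only subtlety worth flagging is the degenerate behavior of $(x-2)^{r-1}$ at $r=0$. To present this cleanly I would either note explicitly that the root of the quartic at $x=2$ is what makes the product a genuine polynomial, or else sidestep the formality altogether by rederiving the result directly: the dandelion carries the equitable partition $\{a\}$, $\{b\}$, the single pendant on $a$, and the $n-3$ pendants on $b$, whose $4\times 4$ quotient matrix supplies the factor $x\bigl(x^3-(n+2)x^2+(3n-2)x-n\bigr)$, while the $n-3$ pendant twins on $b$ contribute the eigenvalue $1$ with multiplicity $n-4$ via Lemma~\ref{lem:twins}. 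Either route reaches the stated characteristic polynomial.
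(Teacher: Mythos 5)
Your proposal is correct and takes essentially the same route as the paper, which obtains the lemma precisely by specializing Lemma~\ref{lem:charpolyG_rij} to $(r,i,j)=(0,1,n-3)$. Your extra care is warranted and accurate: the quartic does specialize to $x^4-(n+4)x^3+(5n+2)x^2-(7n-4)x+2n$, which has the root $x=2$ cancelling the formal factor $(x-2)^{r-1}$ at $r=0$ (indeed the quartic evaluates to $r(r+i+j)$ at $x=2$, vanishing exactly when $r=0$), a degeneracy the paper glosses over with ``immediately.''
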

%\begin{proof}
%I think Wayne had a proof of this.  In any case, it should not be that hard to determine.
%\end{proof}

\begin{lemma}\label{lem:4equations}
Let $p_n(x) = (x^3-(n+2)x^2 + (3n-2)x -n)$. Then for $n \geq 5$ 
\begin{equation}\label{eq:eq1}p_n(2-\phi) = -1,\end{equation} \begin{equation}\label{eq:eq2}p_n(2-\phi + 1/n) >0,\end{equation} \begin{equation}\label{eq:eq3}p_n(n-1) = -1,\end{equation} and \begin{equation}\label{eq:eq4}p_n(n-1+1/n) > 0,\end{equation}  where $\phi = \dfrac{\sqrt{5} + 1}{2}$.

\end{lemma}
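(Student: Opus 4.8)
The plan is to treat the two exact identities (\ref{eq:eq1}) and (\ref{eq:eq3}) by direct substitution, and the two inequalities (\ref{eq:eq2}) and (\ref{eq:eq4}) by evaluating $p_n$ \emph{exactly} at the shifted point and then showing the resulting Laurent polynomial in $n$ is positive for $n\geq 5$. The organizing observation is that $p_n$ separates cleanly into an $n$-independent cubic and $n$ times a quadratic:
\[
p_n(x) = \underbrace{(x^3-2x^2-2x)}_{q(x)} + n\underbrace{(-x^2+3x-1)}_{s(x)}.
\]
The point of the abscissa $2-\phi$ is that $2-\phi=\tfrac{3-\sqrt5}{2}$ is precisely a root of $x^2-3x+1$, equivalently $s(2-\phi)=0$; this is what kills the $n$-dependence and makes (\ref{eq:eq1}) hold uniformly in $n$.

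For (\ref{eq:eq1}), writing $\alpha=2-\phi$ and using $\alpha^2=3\alpha-1$ repeatedly, I would reduce $q(\alpha)=\alpha^3-2\alpha^2-2\alpha$ to $-1$ (one finds $\alpha^3=8\alpha-3$, whence $q(\alpha)=-1$), while $s(\alpha)=0$ removes the $n$ term; hence $p_n(\alpha)=-1$. For (\ref{eq:eq3}) I would simply expand $p_n(n-1)$ as a polynomial in $n$ and watch the $n^3,n^2,n^1$ terms cancel, leaving the constant $-1$.

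For (\ref{eq:eq2}), since $p_n$ is cubic the Taylor expansion about $\alpha$ with increment $h=1/n$ is exact. Using $q(\alpha)=-1$, $s(\alpha)=0$, and $3-2\alpha=\sqrt5$, the expansion collapses to
\[
p_n\!\left(2-\phi+\tfrac1n\right)=(\sqrt5-1)+\frac{3-5\sqrt5}{2n}+\frac{5-3\sqrt5}{2n^2}+\frac{1}{n^3}.
\]
Multiplying by $2n^3>0$ gives $2(\sqrt5-1)n^3+(3-5\sqrt5)n^2+(5-3\sqrt5)n+2$; bounding $\sqrt5$ numerically (so that $2(\sqrt5-1)>2.47$, $5\sqrt5-3<8.19$, and $3\sqrt5-5<1.71$) lower-bounds this by $2.47n^3-8.19n^2-1.71n+2$, which I would show is positive for $n\geq5$ by checking $n=5$ and noting the cubic is increasing thereafter.

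For (\ref{eq:eq4}), the exact Taylor expansion about $n-1$ (again using $p_n(n-1)=-1$, together with $p_n'(n-1)=n^2-5n+5$ and $p_n''(n-1)=4n-10$) simplifies to
\[
p_n\!\left(n-1+\tfrac1n\right)=n-6+\frac{7}{n}-\frac{5}{n^2}+\frac{1}{n^3},
\]
which I would show is positive by checking $n=5$ directly and observing that for $n\geq6$ we have $n-6\geq0$ while $\frac{7n^2-5n+1}{n^3}>0$ (the numerator has negative discriminant). I expect the main obstacle to be purely bookkeeping: the irrational coefficients in (\ref{eq:eq2}) make its positivity argument the most delicate step, though it remains entirely elementary once the denominators are cleared and $\sqrt5$ is bounded.
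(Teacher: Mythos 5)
Your proposal is correct, and though it establishes the same four facts via the same exact evaluations, it is organized around a genuinely different device. The paper proves \eqref{eq:eq1} and \eqref{eq:eq3} by brute-force substitution, repeatedly invoking $\phi^2=\phi+1$ and $\phi^3=2\phi+1$; your decomposition $p_n(x)=q(x)+n\,s(x)$ with $s(x)=-x^2+3x-1$ adds an explanatory layer the paper lacks, since $s(2-\phi)=0$ shows \emph{why} the value at $2-\phi$ is independent of $n$, a fact the paper's computation verifies but never illuminates. Your exact Taylor expansions reproduce the paper's Laurent expressions term by term: your expansion of $p_n(2-\phi+1/n)$ agrees with the paper's $\frac{1}{n^3}(2n^3\phi - 2n^3 - 5n^2\phi + 4n^2 - 3n\phi + 4n + 1)$ after substituting $\phi=(1+\sqrt5)/2$, and your $n-6+\frac{7}{n}-\frac{5}{n^2}+\frac{1}{n^3}$ is exactly the paper's $\frac{1}{n^3}(n^4-6n^3+7n^2-5n+1)$. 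The real divergence is in certifying positivity in \eqref{eq:eq2}: the paper sets $g(n)=4n-3n\phi-5n^2\phi+2n^3\phi+4n^2-2n^3+1$ and runs a two-derivative monotonicity argument ($g''(n)>0$ for $n\ge2$, $g'(4)>0$, hence $g(n)>0$ for $n\ge4$), whereas you clear denominators and bound $\sqrt5$ rationally, reducing to the cubic $2.47n^3-8.19n^2-1.71n+2$, checked at $n=5$ and increasing thereafter (do state explicitly that its derivative $7.41n^2-16.38n-1.71$ is positive at $n=5$ and hence beyond). Both arguments are elementary and airtight; the paper's version buys the slightly stronger range $n\ge4$ for \eqref{eq:eq2} (relevant to its remark that only \eqref{eq:eq4} fails at $n=4$, which is what forces the hypothesis $n\ge5$), while yours avoids calculus in $n$ entirely and makes the choice of abscissa $2-\phi$ conceptually transparent. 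For \eqref{eq:eq4} the two proofs coincide: same regrouping $n^3(n-6)+n(7n-5)+1$ and same direct check at $n=5$, with your negative-discriminant observation for $7n^2-5n+1$ a minor refinement of the paper's remark.
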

\begin{proof}
We consider the four equations in turn.  \\
\noindent Equation~\eqref{eq:eq1}:
\begin{align*}p_n(2-\phi) & = 8 - 12\phi + 6 \phi^2 - \phi^3 - (n+2)(4-4\phi+\phi^2) + (3n-2)(2-\phi)-n\\
&=-\phi^3 + \phi^2(4-n) + \phi(n-2) + n-4\\
&= -(2\phi+1) + (\phi+1)(4-n) + \phi n - 2\phi+n-4\\
&= -1
\end{align*}
\noindent Equation~\eqref{eq:eq2}:
\begin{align*}p_n(2-\phi+1/n) & = -n\phi^2 + n\phi+n -\phi^3 + 4\phi^2 -5 + \frac{3\phi^2}{n}-\frac{8\phi}{n}+\frac{1}{n}-\frac{3\phi}{n^2}+\frac{4}{n^2} + \frac{1}{n^3}\\
&=\frac{1}{n^3}(4n - 3n\phi-5n^2 \phi + 2n^3\phi + 4n^2 - 2n^3 + 1).
\end{align*}
We note that if $n = 4$, this equation simplifies to $(36\phi -47)/64 \approx 0.1758>0$.  Since $1/n^3 > 0$ for $n>0$, to determine if $p_n(2-\phi+1/n) > 0$ we consider the term $g(n) = 4n - 3n\phi-5n^2 \phi + 2n^3\phi + 4n^2 - 2n^3 + 1$ and show that $g(n)> 0$ for all $n \geq 4.$ We note that $g'(n) = 4 - 3\phi-10n \phi + 6n^2\phi + 8n - 6n^2$, which is clearly greater than zero for $n = 4$, moreover $g''(n)  = n(12\phi-12) + 8-10\phi\approx 7.4164 n - 8.1803$ which is greater than zero for all $n \geq 2.$  Hence $g'(n)$ is an increasing function for $n \geq 2$, and since $g'(4) > 0$, we can say $g'(n) > 0$ for all $n \geq 0$ which implies that $g(n) > 0$ for $n \geq 4.$\\

\noindent Equation~\eqref{eq:eq3}:

\begin{align*}p_n(n-1) & = n^3-3n^2+3n-1-(n+2)(n^2-2n+1) + 3n^2-5n + 2 -n\\
&=-1.
\end{align*}
Finally, equation~\eqref{eq:eq4}:
\begin{align*}p_n(n-1+1/n) & = \frac{1}{n^3}(n^4-6n^3+7n^2-5n+1)\\
&=\frac{1}{n^3}(n^3(n-6)+n(7n-5)+1)
\end{align*}
Clearly this is positive when $n \geq 6$.  We observe that $p_4(4-1+1/4) \approx -0.5469$ and $p_5(5-1+1/5)\approx0.208.$

\end{proof}

\begin{theorem}
For the dandelion graph $G$ on $n$ vertices the algebraic connectivity of $G$ is in the interval $(2-\phi, 2-\phi+1/n)$, and $3-\phi-1/n<\lambda_2(G) + \lambda_2(G^C) < 3 -\phi+1/n$.
\end{theorem}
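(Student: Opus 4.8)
The plan is to read off the complete Laplacian spectrum of the dandelion $G$ from its characteristic polynomial and then locate the two relevant eigenvalues using the sign data already in hand. By Lemma \ref{lem:dandpoly} the eigenvalues of $G$ are $0$, the value $1$ with multiplicity $n-4$, and the three roots of the cubic $p_n(x)=x^3-(n+2)x^2+(3n-2)x-n$. So the whole statement reduces to pinning down where the three roots of $p_n$ sit relative to $0$, $1$, and $n$.

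First I would combine the four evaluations in Lemma \ref{lem:4equations} with the trivial observation $p_n(0)=-n<0$ and apply the intermediate value theorem. For $n\geq 5$ the five test points satisfy $0<2-\phi<2-\phi+1/n<n-1<n-1+1/n$, and the sign pattern of $p_n$ across them is $(-,-,+,-,+)$. This forces exactly one root in each of $(2-\phi,\,2-\phi+1/n)$, $(2-\phi+1/n,\,n-1)$, and $(n-1,\,n-1+1/n)$; since $p_n$ is a cubic these are all of its roots. Call them $r_1<r_2<r_3$.

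Next comes the bookkeeping that matches $r_1$ and $r_3$ to $\lambda_2(G)$ and $\lambda_n(G)$. For $n\geq 5$ one checks that $2-\phi+1/n<1$, so $r_1$ is the unique eigenvalue of $G$ strictly between $0$ and $1$; as $0$ is the smallest eigenvalue, $\lambda_2(G)=r_1\in(2-\phi,\,2-\phi+1/n)$, which is the first claim. Similarly $r_3>n-1$ exceeds every other eigenvalue (namely $0$, $1$, $r_1$, and $r_2$, all of which lie below $n-1$), so $\lambda_n(G)=r_3\in(n-1,\,n-1+1/n)$.

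Finally I would invoke the complementation identity $\lambda_n(G)+\lambda_2(G^c)=n$ (the case $i=n$ of the relation $\lambda_i(G)+\lambda_{n+2-i}(G^c)=n$ from the introduction) to obtain $\lambda_2(G^c)=n-r_3\in(1-1/n,\,1)$. Adding the interval bounds for $\lambda_2(G)$ and $\lambda_2(G^c)$ then gives $3-\phi-1/n<\lambda_2(G)+\lambda_2(G^c)<3-\phi+1/n$. There is no genuinely hard step: Lemma \ref{lem:4equations} performs all of the analytic work, so the only care required is the two comparisons ($2-\phi+1/n<1$ and $r_3$ being the top eigenvalue) that correctly identify which roots of $p_n$ are $\lambda_2(G)$ and $\lambda_n(G)$.
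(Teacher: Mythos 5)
Your proposal is correct and takes essentially the same route as the paper's proof: both read the spectrum $0$, $1$ (with multiplicity $n-4$), and the roots of $p_n$ off Lemma~\ref{lem:dandpoly}, locate the smallest and largest roots via the sign evaluations of Lemma~\ref{lem:4equations}, and conclude using $\lambda_2(G^c) = n - \lambda_n(G)$. Your intermediate-value bookkeeping---the sign pattern $(-,-,+,-,+)$ at the five test points and the check $2-\phi+1/n<1$ identifying the smallest root as $\lambda_2(G)$---simply makes explicit the root-identification steps that the paper's terser proof leaves implicit.
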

\begin{proof}
The characteristic polynomial of $G$ is given by $x(x-1)^{n-4}(x^3-(n+2)x^2+(3n-2)x -n)$, and the eigenvalues are $0$, $1$ and the roots of $p_n=x^3-(n+2)x^2+(3n-2)x -n$.  In particular $\lambda_2(G)$ is the smallest root of $p_n(x)$ and $\lambda_n(G)$ is the largest root of $p_n(x)$.  By Lemma~\ref{lem:4equations}, $2-\phi <\lambda_2(G) < 2-\phi+1/n$ and $n-1 < \lambda_n(G) < n-1+1/n$.  Hence $1-1/n < \lambda_2(G^C) < 1$ and $3-\phi-1/n<\lambda_2(G) + \lambda_2(G^C) < 3 -\phi+1/n$.
\end{proof}

We remark that $3-\phi\approx1.382$.
% \begin{theorem}
% The Laplacian spread of the dandelion graph is approaches $n-3+\phi$ as $n\to \infty$.
% \end{theorem}
% \begin{proof}
% By Lemma~\ref{lem:dandpoly} the characteristic polynomial of the dandelion graph is given by $x(x-1)^{n-4}(x^3-(n+2)x^2+(3n-2)x -n)$, and the eigenvalues are $0$, $1$ and the roots of $p_n=x^3-(n+2)x^2+(3n-2)x -n$.  In particular $\lambda_2$ is the smallest root of $p_n(x)$ and $\lambda_n$ is the largest root of $p_n(x)$.  By Lemma~\ref{lem:4equations}, $2-\phi <\lambda_2 < 2-\phi+1/n$ and $n-1 < \lambda_n < n-1+1/n$, hence the Laplacian spread is bounded 
% \[n-3+\phi-1/n < \lambda_n -\lambda_2 < n-3+\phi+1/n\] and by the squeeze theorem the spread approaches $n-3+\phi < n-1$ as $n\to \infty$.
% \end{proof}

\subsection{An empirically observed bound on the symmetrized spread}

When plotting the pair $(\lambda_2(G), \lambda_2(G^c))$ for all graphs
on up to $10$ vertices, a pattern is observed in certain graphs
for which both values are simultaneously low, and in the values
that they achieve.
This pattern is observed for example in Figure~\ref{fig:curves} in which
certain points lie on the dashed (green) curve, but no points lie beneath it.

We describe three families of graphs on $n\ge4$ vertices,
each of which is defined with an associated pair of positive rational values $s,t$ with $s + t = 1$,
and state a conjectured bound for which exactly the graphs in these
families are supposed to be tight. Each family is modeled on a small
graph by replacing certain vertices by a cluster consisting of
one or more vertices.
Within a single cluster, vertex adjacencies are arbitrary.
If vertices $a$ and $b$ are adjacent and $a$ is replaced by
a cluster $A$, then every vertex in $A$ must be adjacent to $b$.
If vertices $a$ and $b$ are adjacent and both $a$ and $b$
are replaced by clusters $A$ and $B$, then every vertex in $A$
must be adjacent to every vertex in $B$.
The families are as follows:
\begin{enumerate}
    \item Thick-stemmed dandelions of the first kind. These
    are modeled on the path $(a, b, c, d)$ by replacing
    vertex $a$ with a cluster $A$ and replacing vertex $c$
    with a cluster $C$.
    The value of $s$ is $\frac{|C|}{n - 2}$ and the value
    of $t$ is $\frac{|A|}{n - 2}$.
    \item Thick-stemmed dandelions of the second kind.
    These are modeled on the path $(c, a, d, b)$ by replacing
    vertex $a$ with a cluster $A$ and replacing vertex $c$
    with a cluster $C$.
    The value of $s$ is $\frac{|A|}{n - 2}$ and the value
    of $t$ is $\frac{|C|}{n - 2}$.
    \item Generalized bull graphs. These are modeled on
    the bull graph with an induced path
    $(a, b, c, d)$ and a vertex $e$ adjacent to $b$ and $c$.
    The vertex $e$ is replaced by a cluster $E$.
    Every graph in this family
    has associated values $s = t = 1/2$.
\end{enumerate}
Figure \ref{fig:families_on_curve} gives an illustration of these three families of graphs. The complement of a thick-stemmed dandelion of the first kind
is a thick-stemmed dandelion of the second kind, and the
complement of a generalized bull graph is a generalized bull graph.

\begin{figure}
    \centering
    \begin{tikzpicture}
    \draw (-1,0)ellipse(10pt and 14pt) (0,0)node[label={[shift={(0,-.6)}]{$b$}}]{} (1,0)ellipse(10pt and 14pt) (2,0)node[label={[shift={(0,-.6)}]{$d$}}]{};
    \draw (-1,-.8)node[fill=white]{$A$}  (1,-.8)node[fill=white]{$C$};
    \draw (-1,.4)--(0,0) (-1,.2)--(0,0) (-1,0)--(0,0) (-1,-.2)--(0,0) (-1,-.4)--(0,0);
    \draw (.9,.4)--(0,0) (.9,.2)--(0,0) (.9,0)--(0,0) (.9,-.2)--(0,0) (.9,-.4)--(0,0);
    \draw (1.1,.4)--(2,0) (1.1,.2)--(2,0) (1.1,0)--(2,0) (1.1,-.2)--(2,0) (1.1,-.4)--(2,0);
    \end{tikzpicture}~~~~
    \begin{tikzpicture}
    \draw (-1,0)ellipse(10pt and 14pt) (1,0)node[label={[shift={(0,-.6)}]{$d$}}]{} (0,0)ellipse(10pt and 14pt) (2,0)node[label={[shift={(0,-.6)}]{$b$}}]{};
    \draw (0,-.8)node[fill=white]{$A$}  (-1,-.8)node[fill=white]{$C$};
    \draw (.1,.4)--(1,0) (.1,.2)--(1,0) (.1,0)--(1,0) (.1,-.2)--(1,0) (.1,-.4)--(1,0);
    \draw (-1,.4)--(-.1,.4)--(-1,.2)--(-.1,.2)--(-1,0)--(-.1,0)--(-1,-.2)--(-.1,-.2)--(-1,-.4)--(-.1,-.4)--(-1,-.2) (-.1,-.2)--(-1,0) (-.1,0)--(-1,.2) (-.1,.2)--(-1,.4);
    \draw (1,0)--(2,0);
    \end{tikzpicture}~~~~
    \begin{tikzpicture}
    \draw (-2,0)node[label={[shift={(0,-.6)}]{$a$}}]{}--(-1,0)node[label={[shift={(0,-.6)}]{$b$}}]{}--(1,0)node[label={[shift={(0,-.6)}]{$c$}}]{}--(2,0)node[label={[shift={(0,-.6)}]{$d$}}]{}(0,1)ellipse(10pt and 14pt);
\draw (0,.25)node[fill=white]{$E$}; 
\draw (-.1,1.4)--(-1,0) (-.1,1.2)--(-1,0) (-.1,1)--(-1,0) (-.1,.8)--(-1,0) (-.1,.6)--(-1,0);
    \draw (.1,1.4)--(1,0) (.1,1.2)--(1,0) (.1,1)--(1,0) (.1,.8)--(1,0) (.1,.6)--(1,0);
    \end{tikzpicture}
    \caption{Illustrations for thick-stemmed dandelions of the first and second kind, and the generalized bull family.}
    \label{fig:families_on_curve}
\end{figure}
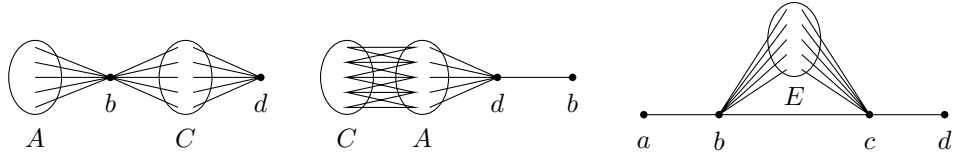

\begin{theorem}
\label{thm:greenpoints}
Let $G$ be a graph on $n > 4$ vertices which is a thick-stemmed dandelion of the first kind,
a thick-stemmed dandelion of the second kind, or a generalized bull graph.
Let $x$ denote $\lambda_2(G)$ and let $y$ denote $\lambda_2(G^c)$.
Then $x$ and $y$ satisfy the equation
\[
xy(2-xy)
=
n(1-x)(1-y)(n-2-x-y).
\]
\end{theorem}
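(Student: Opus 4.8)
The plan is to exploit the cluster structure directly through invariant subspaces, reduce each graph to a small equitable quotient, and then eliminate the cluster sizes to turn the claim into a single polynomial identity. The first step is eigenvalue bookkeeping. For any graph assembled by the cluster rules, let $S\subseteq\R^n$ be the space of vectors constant on every cluster, and let $S^\perp$ be the vectors summing to zero on every cluster and vanishing on every singleton. Because each cluster attaches \emph{completely} to each of its neighbors, both $S$ and $S^\perp$ are invariant under $L(G)$: on $S$ the internal cluster edges contribute nothing and $L$ acts as the equitable-partition quotient matrix $B$, while on $S^\perp$ every external contribution telescopes away, so the action restricted to a cluster $A$ is $L_A+\deg_{\mathrm{ext}}(A)\,I$. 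Hence every $S^\perp$-eigenvalue is at least the external degree of some cluster, which is at least $1$. Each quotient characteristic polynomial below is negative at $0$ and positive at $1$, so it has a root in $(0,1)$; since all $S^\perp$-eigenvalues are $\ge 1$, this quotient root is the smallest positive eigenvalue, i.e. $\lambda_2(G)$ is the smallest positive root of $\det(xI-B)$. The same analysis applies to $G^c$, whose cluster partition is inherited from that of $G$ (the complement of a first-kind dandelion is a second-kind dandelion, and a generalized bull complements to a generalized bull), so the arbitrary internal adjacencies do not affect $x=\lambda_2(G)$ or $y=\lambda_2(G^c)$.

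For the two thick-stemmed dandelion families I would then compute $B$ explicitly. Writing $a=|A|$, $c=|C|$, $n=a+c+2$, the quartic $\det(xI-B)$ has the factor $x$ and a cubic cofactor $x^3-(n+c+1)x^2+(cn+2n-2)x-cn$, which rearranges neatly as $N(x)-c\,D(x)$ with
\[
N(x)=x(x-2)(x-n+1),\qquad D(x)=x^2-nx+n .
\]
Since $\lambda_2(G)<1$ forces $D(\lambda_2(G))>0$, putting $x=\lambda_2(G)$ gives $c=N(x)/D(x)$; the second-kind quotient is obtained by interchanging $A$ and $C$, so $y=\lambda_2(G^c)$ satisfies $a=N(y)/D(y)$. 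Imposing the single constraint $a+c=n-2$ yields the symmetric relation
\[
N(x)D(y)+N(y)D(x)=(n-2)\,D(x)D(y).
\]

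The decisive step is the algebraic observation that the difference $N(x)D(y)+N(y)D(x)-(n-2)D(x)D(y)$ factors as
\[
(n-x-y)\,[\,xy(2-xy)-n(1-x)(1-y)(n-2-x-y)\,].
\]
Because $x,y<1$ we have $n-x-y>n-2>0$, so the bracket must vanish, which is exactly the claimed identity; verifying the factorization is a bounded polynomial check, guided by evaluating the cofactor at a few points to recognize the linear factor $n-x-y$. The generalized bull is handled by the same reduction but is cleaner: splitting its quotient by the $a\leftrightarrow d,\ b\leftrightarrow c$ symmetry produces an antisymmetric quadratic $x^2-nx+(n-2)$ and a symmetric quadratic $x^2-nx+n$, giving $\lambda_2(G)=\tfrac{n-\sqrt{n^2-4n+8}}{2}$ and $\lambda_n(G)=\tfrac{n+\sqrt{n^2-4n+8}}{2}$, hence $y=n-\lambda_n(G)=x=:\mu$ with $\mu^2=n\mu-(n-2)$. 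Substituting this (so that $2-\mu^2=n(1-\mu)$) collapses the target to a linear-in-$\mu$ statement that $\mu^2=n\mu-(n-2)$ already guarantees.

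The hard part will not be any single computation but making the eigenvalue-localization step airtight and uniform: I must be certain that for every member of all three families \emph{and} its complement the extremal eigenvalue $\lambda_2$ (and, on the bull route, $\lambda_n$) comes from the quotient and never from the internal $S^\perp$ spectrum, so that the explicitly arbitrary internal adjacencies truly do not move $x$ and $y$. This requires checking that the relevant quotient root lies strictly below the minimum external degree (for $\lambda_2$) and above the maximum internal eigenvalue $\deg_{\mathrm{ext}}+\lambda_{\max}(L_A)$ (for $\lambda_n$) in each case. Once that decoupling is secured, the rest is the finite factorization check above together with the one-line bull substitution.
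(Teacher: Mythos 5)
Your proposal is correct, and its algebraic core coincides with the paper's: the paper solves the quotient cubic $w_n(x,s)=0$ for $s=r_n(x)$, which in your notation is exactly $N(x)/\bigl((n-2)D(x)\bigr)$ since $N(x)-(x-1)(x-n+2)(x-n)=(n-2)D(x)$, imposes $s+t=1$, and factors the cleared numerator as $(n-x-y)$ times (the negative of) your bracket, discarding the linear factor because $x,y<1$ --- the same computation as your $N(x)D(y)+N(y)D(x)=(n-2)D(x)D(y)$ step, and your claimed factorization checks out. Where you genuinely differ is localization. The paper pins down the relevant quotient roots via its Lemma on diameter-3 graphs with diameter-3 complements (at most one eigenvalue in $(0,1)$ and one in $(n-1,n)$), plus sign changes of $w_n$ at $0,1,n-1,n$, and handles the second kind and the complement through the symmetry $w_n(n-x,1-s)=-w_n(x,s)$. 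You instead decompose $\R^n=S\oplus S^\perp$ and bound the $S^\perp$ spectrum below by external degrees; this is self-contained, makes fully explicit why the arbitrary internal cluster adjacencies are harmless (the paper's equitable-partition appeal tacitly uses that internal edges cancel on cluster-constant vectors, the partition being only externally equitable), and for the dandelions needs only $\lambda_2$ of $G$ and of $G^c$, never $\lambda_n$. The $\lambda_n$ check you flag on your bull route does go through: the quotient root $\frac{n+\sqrt{(n-2)^2+4}}{2}$ exceeds $n-1$, while every $S^\perp$ eigenvalue is at most $\deg_{\mathrm{ext}}(E)+\lambda_{\max}(L_E)\le 2+(n-4)=n-2$; alternatively you could read $y$ directly off the quotient of $G^c$, itself a generalized bull, and skip $\lambda_n$ entirely. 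One harmless slip: your blanket claim that each quotient polynomial is negative at $0$ and positive at $1$ fails for the bull quotient, whose nonzero part $(x^2-nx+n)(x^2-nx+n-2)$ is positive at $0$, but your explicit-root treatment of the bull (using $2-\mu^2=n(1-\mu)$ with $\mu^2=n\mu-(n-2)$) makes this irrelevant and is a tidy alternative to the paper's observation that $x^2-nx+n-2$ divides $w_n(x,1/2)$.
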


\begin{proof}

Note that the thick-stemmed dandelions of the first kind have an equitable partition with quotient matrix
\[
\begin{bmatrix}
1&-1&0&0\\-(1-s)(n-2) & n-2 &-s(n-2)&0\\0&-1&2&-1\\0&0&-s(n-2)&s(n-2)
\end{bmatrix}
\]
whose characteristic polynomial is $xw_n(x,s)$ where

\[
w_n(x, s) = x^3 + (-sn - n + 2s - 1)x^2 + (n^2s - 2ns + 2n - 2)x - n^2s + 2ns.
\]

Observe that
\begin{equation*}
\begin{aligned}
w_n(0)&= ns(2-n) < 0\\   w_n(1)&= (n-2)(1-s) > 0\\ w_n(n-1) &= -(n-2)s < 0,\\ w_n(n) &= n(n-2)(1-s) > 0.
\end{aligned}
\end{equation*}\medskip

So  there is exactly one root of $w_n$ in $(0,1)$ and exactly one in $(n-1,n)$.  Then by Lemma \ref{lem:1root}, these are the smallest and largest roots of the characteristic polynomial for the Laplacian of the thick-stemmed dandelion.

Observe that this polynomial has the symmetry
\[
w_n(n - x, 1 - s) = -w_n(x, s)
\]
and thus it is true both that $x = \lambda_2(G)$ is a root of $w_n(x, s)$
and that $y = \lambda_2(G^C) = n - \lambda_n(G)$ is a root of $w_n(y, t)$.
Since the desired identity is symmetric in $x$ and $y$,
in order to prove the desired identity for thick-stemmed dandelions of the second
kind it will suffice to prove it for their complements the thick-stemmed dandelions
of the first kind.

For generalized bull graphs, there is an equitable partition with quotient matrix
\[\begin{bmatrix}
1&-1&0&0&0\\-1&n-2&-(n-4)&-1&0\\0&-1&2&-1&0\\0&-1&-(n-4)&n-2&-1\\0&0&0&-1&1
\end{bmatrix}\]
which has characteristic polynomial
\[
x(x^2-nx+n)(x^2-nx+n-2).
\]
Note that the smallest root is the smallest root of $x^2-nx+n-2$, which is a factor of $w_n(x,1/2)$.

Having established that in all three families
$x$ and $y$ satisfy the identities $w_n(x, s) = 0$ and $w_n(y, t) = 0$
for the appropriate values of $s$ and $t$, it remains to show that $x$ and $y$ together
satisfy the identity stated in Theorem~\ref{thm:greenpoints}.
A certain rational function will prove useful in this aim.
Since $w_n(x, s)$ is linear in $s$, it is easy enough to solve the identity $w_n(x, s)=0$ for $s$
and obtain a rational function that we name $r_n(x)$:
\begin{eqnarray*}
%w_n(x, s) = 0 \Longrightarrow s = 
r_n(x) &:=& \frac{x^3 - nx^2 - x^2 + 2nx - 2x}{nx^2 - 2x^2 - n^2x + 2nx + n^2 - 2n} \\
&=& \frac{x(x-2)(x-n+1)}{ x(x-2)(x-n+1) \ -\  (x-1)(x-n+2)(x-n)} \\
&~& \\
w_n(x, s) = 0 &\Longrightarrow& s = r_n(x) \\
w_n(y, t) = 0 &\Longrightarrow& t = r_n(y).
\end{eqnarray*}

Starting then from the identity
$$
s + t = 1
$$
we obtain
\[
r_n(x) + r_n(y) - 1 = 0
\]
whose numerator, after canceling and factoring, yields the polynomial identity
\[
(-x - y + n)(x^2y^2 - nx^2y - nxy^2 + n^2xy + nx^2 + ny^2 - n^2x - n^2y - 2xy + nx + ny + n^2 - 2n) = 0.
\]
Given $x < 1$, $y < 1$, and $n > 2$, the first factor is nonzero, yielding, after
regrouping and factoring, the desired identity
\[
xy(2-xy)
=
n(1-x)(1-y)(n-2-x-y).
\]
\end{proof}

\begin{corollary}\label{cor:max}
Let $G$ on $n$ vertices be a generalized bull graph, or, for $n$ even,
a thick-stemmed dandelion of either the first or second kind with $|A| = |C|$.
Then $G$ achieves
\[
\frac{n-3}{n-2} <
\max\{\lambda_2(G),\lambda_2(G^c)\} =
\frac{n - \sqrt{(n - 2)^2 + 4}}{2}
< \frac{n-2}{n-1}.
\]
\end{corollary}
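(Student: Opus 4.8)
The plan is to reduce everything to the symmetric case $s=t=\tfrac12$ and then finish with two elementary square-and-compare estimates. First I would note that each of the three listed configurations realizes $s=t=\tfrac12$: generalized bull graphs have $s=t=\tfrac12$ by definition, while a thick-stemmed dandelion of either kind with $|A|=|C|$ and $n$ even has $s=\tfrac{|A|}{n-2}=\tfrac{|C|}{n-2}=\tfrac12$. Since the complement of such a graph again belongs to one of these families with the same parameters, both $x=\lambda_2(G)$ and $y=\lambda_2(G^c)$ are roots in $(0,1)$ of the same polynomial $w_n(\cdot,\tfrac12)$ (in the bull case, of the factor $x^2-nx+n-2$ of $w_n(x,\tfrac12)$). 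By the root count already carried out in the proof of Theorem~\ref{thm:greenpoints} together with Lemma~\ref{lem:1root}, this polynomial has a unique root in $(0,1)$, so $x=y$ and hence $\max\{\lambda_2(G),\lambda_2(G^c)\}=x$.

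Second I would compute $x$ explicitly. Setting $s=\tfrac12$ yields the factorization
\[
w_n\!\left(x,\tfrac12\right)=\left(x-\tfrac n2\right)\!\left(x^2-nx+n-2\right),
\]
which is consistent with the generalized bull computation, where the quotient matrix has characteristic polynomial $x(x^2-nx+n)(x^2-nx+n-2)$. Using $(n-2)^2+4=n^2-4n+8$, the smallest positive root of the relevant quadratic factor is
\[
x=\frac{n-\sqrt{(n-2)^2+4}}{2},
\]
which establishes the middle equality of the corollary and shows $x<1$.

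It then remains to prove the two strict inequalities. For the lower bound $x>\tfrac{n-3}{n-2}$, I would isolate the radical as $\tfrac{n^2-4n+6}{n-2}>\sqrt{(n-2)^2+4}$ (both sides positive for $n>2$) and square; writing $u=n^2-4n$ collapses the inequality to $(u+6)^2>(u+8)(u+4)$, i.e.\ $36>32$. For the upper bound $x<\tfrac{n-2}{n-1}$, I would similarly reduce to $\tfrac{n^2-3n+4}{n-1}<\sqrt{(n-2)^2+4}$ and square, after which the difference of the two sides is exactly $4(n-2)>0$. Both estimates are therefore immediate once the radical is isolated.

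These calculations are all routine; the only genuine care needed is (i) to verify that the common root really is $\lambda_2$ rather than a larger eigenvalue, i.e.\ that it lies in $(0,1)$ below the twin eigenvalues at $1$ and $2$ and below the roots of the companion quadratic factor, which follows from the sign pattern of $w_n$ recorded in the proof of Theorem~\ref{thm:greenpoints}; and (ii) to confirm positivity of both sides before each squaring step so that the equivalences are valid. Neither point is a real obstacle, so I expect the full argument to be short.
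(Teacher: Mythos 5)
Your proposal is correct and follows essentially the same route as the paper: reduce each family to $s=t=\tfrac12$, use the factorization $w_n\!\left(x,\tfrac12\right)=\left(x-\tfrac n2\right)\!\left(x^2-nx+n-2\right)$ together with the root-location and Lemma~\ref{lem:1root} arguments from Theorem~\ref{thm:greenpoints}, and conclude $\lambda_2(G)=\lambda_2(G^c)$ from the symmetric quadratic factor (the paper gets $x=y$ directly from the roots summing to $n$, you get it via uniqueness of the root in $(0,1)$ plus complement-closure of the families --- an equivalent observation). Your explicit squaring verifications of the two bracketing inequalities, including the $36>32$ and $4(n-2)>0$ computations, correctly fill in details the paper leaves as routine.
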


\begin{proof}
In fact $\lambda_2(G)$ and $\lambda_2(G^c) = n - \lambda_n(G)$ are equal,
coming from $s = t = 1/2$ and the symmetric roots of the factor $x^2 - nx + n - 2$
of $w_n(x, 1/2)$.
\end{proof}

\begin{conjecture}\label{conj:emp}
Let $G$ be a graph on $n$ vertices such that $x = \lambda_2(G)$ and $y = \lambda_2(G^c)$
are both strictly less than $1$.
Then $x$ and $y$ satisfy the inequality
\[
xy(2-xy)
\ge
n(1-x)(1-y)(n-2-x-y)
,
\]
with equality only in the case that $G$ belongs to one of the three families
enumerated above.
\end{conjecture}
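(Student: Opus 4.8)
The plan is to collapse the problem to graphs of diameter exactly $3$ (where it connects to Lemma~\ref{lem:1root}), then to reduce to cluster blow-ups, and finally to verify the inequality on the resulting finite list of quotients using the identity already established in Theorem~\ref{thm:greenpoints}. For the first reduction, the hypotheses $x=\lambda_2(G)<1$ and $y=\lambda_2(G^c)<1$ pin the diameters down completely. By the known fact that $\operatorname{diam}(H)\le 2$ implies $\lambda_2(H)\ge 1$ (recalled in Section~\ref{sec:strengthen}), the assumptions force $\operatorname{diam}(G)\ge 3$ and $\operatorname{diam}(G^c)\ge 3$. Conversely, the dominating-edge argument inside the proof of Lemma~\ref{lem:1root} shows that whenever $G^c$ has two vertices at distance $\ge 3$ the graph $G$ possesses a dominating edge, and a dominating edge $ab$ forces $\operatorname{diam}(G)\le 3$ (any two vertices reach each other through $a$ or $b$ in at most three steps); applying this and its symmetric form yields $\operatorname{diam}(G)=\operatorname{diam}(G^c)=3$. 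Lemma~\ref{lem:1root} then identifies $x$ as the unique Laplacian eigenvalue of $G$ in $(0,1)$ and $n-y=\lambda_n(G)$ as the unique one in $(n-1,n)$, and the target becomes $h(x,y):=xy(2-xy)-n(1-x)(1-y)(n-2-x-y)\ge 0$ on the finite class of diameter-$3$/diameter-$3$-complement graphs, with equality only on the three listed families.

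For the second reduction, I would fix $n$, suppose for contradiction that some graph in the class has $h<0$, and choose $G$ minimizing $h$. The tool is to toggle a single pair $\{u,v\}$ whose (non)adjacency is not forced by the dominating-edge structure. For a simple eigenvalue with unit eigenvector $z$, such a rank-one change perturbs the eigenvalue by $(z_u-z_v)^2$ to first order, and by Lemma~\ref{lem:insert} together with eigenvalue interlacing the induced changes in $\lambda_2$ and $\lambda_n$ have controlled sign; passing through $x=\lambda_2$, $y=n-\lambda_n$ converts these into the induced change in $h$. At a minimizer the resulting sign constraints should force the Fiedler vector $f$ (for $\lambda_2$) and the top eigenvector $g$ (for $\lambda_n$) to be constant on each class of mutually interchangeable vertices, and should force the within-class and between-class adjacencies, so that $G$ is an equitable blow-up of a bounded skeleton. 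This is precisely the structure underlying the three families and the $G(r,i,j)$ computations of Section~\ref{sec:G}.

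To finish, observe that for a blow-up the eigenvalues $\lambda_2$ and $\lambda_n$ are roots of the small quotient matrix of the equitable partition, so $h$ becomes an explicit function of the cluster-size fractions, exactly as $w_n(x,s)$ and the rational function $r_n$ arise in the proof of Theorem~\ref{thm:greenpoints}. I would enumerate the finitely many skeletons compatible with $\operatorname{diam}(G)=\operatorname{diam}(G^c)=3$ and evaluate $h$ on each; Theorem~\ref{thm:greenpoints} and Corollary~\ref{cor:max} already give $h=0$ on the three families, so the remaining work is to show that every other skeleton yields $h>0$, which is a finite calculus/algebra check in the fraction variables analogous to the reparametrizations carried out in Theorems~\ref{thm:Gfamily} and~\ref{thm:Ghatfamily}.

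The decisive and genuinely hard step is the reduction to blow-ups. Edge toggles move in discrete unit steps rather than infinitesimally, so the first-order eigenvalue formulas must be upgraded to exact comparisons; $\lambda_2(G)$ or $\lambda_n(G)$ may fail to be simple, forcing one-sided derivatives along eigenvalue branches; and a toggle can alter a diameter and eject $G$ from the class. Controlling these three effects at once—that is, ruling out a non-blow-up graph that is a strict local minimizer with $h<0$—is the crux, and is presumably why the statement is offered only as a conjecture. A secondary and more mechanical difficulty is certifying that the skeleton enumeration in the final step is genuinely exhaustive.
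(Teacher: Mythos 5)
You should know at the outset that the statement you are proving is Conjecture~\ref{conj:emp}: the paper offers no proof of it. What the paper does prove is the equality case --- Theorem~\ref{thm:greenpoints} shows that the three named families satisfy $xy(2-xy)=n(1-x)(1-y)(n-2-x-y)$ exactly --- and the inequality itself is supported only by computation on graphs with up to $10$ vertices. So there is no paper proof to compare against, and any complete argument would be new mathematics. That said, your first reduction is sound and matches the paper's framing: $x<1$ and $y<1$ force $G$ and $G^c$ to be connected with diameter exactly $3$ (the dominating-edge argument inside Lemma~\ref{lem:1root} does cap the diameter at $3$, as you say), and Lemma~\ref{lem:1root} then makes $x$ and $n-y$ the unique Laplacian eigenvalues of $G$ in $(0,1)$ and $(n-1,n)$. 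Your third step is likewise exactly the paper's own machinery: the quotient polynomial $w_n(x,s)$, the rational function $r_n$, and the $s+t=1$ identity from the proof of Theorem~\ref{thm:greenpoints}.

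The genuine gap is your second step, and it is worse than the three caveats you list. Beyond discreteness, eigenvalue multiplicity, and ejection from the diameter class, the toggle move has a structural defect for this particular objective. Inserting an edge weakly increases $\lambda_2$ (Lemma~\ref{lem:insert}) and weakly increases $\lambda_n$, so it moves $x$ up but moves $y=n-\lambda_n(G)$ down. Meanwhile, as the paper itself observes just after Conjecture~\ref{conj:emp}, the function $h(x,y)=xy(2-xy)-n(1-x)(1-y)(n-2-x-y)$ is increasing in \emph{both} $x$ and $y$ on $(0,1)^2$. A single toggle therefore moves $(x,y)$ along a tradeoff direction in which $h$ has no definite sign, so at a hypothetical minimizer the local-move comparisons yield no usable sign constraints, and your conclusion that the Fiedler vector and top eigenvector are constant on interchangeable classes --- hence that $G$ is an equitable blow-up --- does not follow from anything you have set up. Moreover, even granting the blow-up reduction, you have established no bound on the number of parts of the skeleton independent of $n$; the ``interchangeable classes'' of a minimizer could number $\Theta(n)$, so the promised finite enumeration is not uniformly finite and would have to be redone for every $n$. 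These missing pieces are precisely the substance of the conjecture: what you have written is a reasonable research programme, correctly anchored to the paper's partial results, but not a proof --- as your own final paragraph effectively concedes.
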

% In the case of equality, the positive rational
% values $s + t = 1$ associated to $G$ are moreover given~by
% \[
% s = \frac{x(2-x)(1 - y)}{(2-x-y)(x + y - xy)}, \ \ t = \frac{y(2-y)(1-x)}{(2-x-y)(x+y-xy)}.
% \]
% \end{conjecture}

Observe that for any given pair $(x, y)$ in the range $0 < x, y < 1$ that
satisfies the conjectured inequality, all entrywise greater pairs
$x^\prime \ge x$, $y^\prime \ge y$ in the allowed range also satisfy
the inequality, because within this range
the right side of the inequality is decreasing in both
$x$ and $y$
and the left side of the inequality is increasing in both $x$ and $y$,
as can be seen for $x$ by
\[
\frac{\partial}{\partial x}(xy(2 - xy)) = 2y(1-xy)
\]
and for $y$ symmetrically.

\begin{figure}
    \captionsetup{singlelinecheck=off, format=hang}
    \centering
    \includegraphics[width=3.6in]{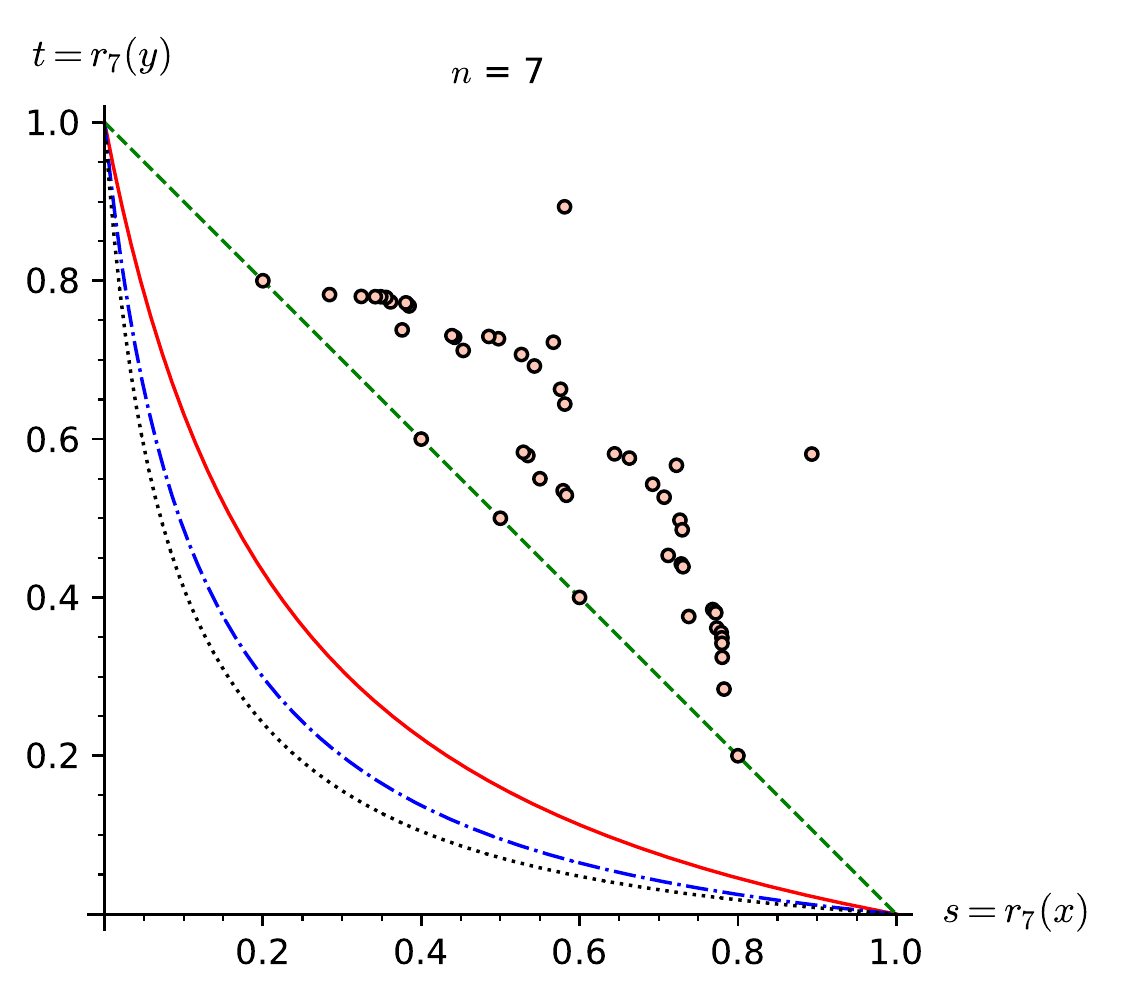}
    \caption[Shifted points and four curves for $n = 7$]%
    {The same curves and points as % appear within the square $[0, 1] \times [0, 1]$
    in Figure~\ref{fig:curves},
    % restricted to the square $[0, 1] \times [0, 1]$,
    reparameterized by the rational function $r_7$ to display pairs
    $(s, t) = (r_7(x), r_7(y))$.}
    \label{fig:shifted_curves}
\end{figure}

Some remarks about the rational function $r_n(x)$: It has two vertical asymptotes coming from the real roots of its
quadratic denominator,
rotational symmetry around the point $(n/2, 1/2)$, and
a slant asymptote of $y - 1/2 = (x - n/2)/(n - 2)$.
Each of its three branches is strictly increasing with range $(-\infty, \infty).$
It passes through the six points $(0, 0), (1, 1), (2, 0), (n - 2, 1), (n - 1, 0),$ and $(n, 1).$
It is increasing and concave upward from the point $(0, 0)$ to the point $(1, 1)$ and thus
represents an invertible rescaling of the interval $[0, 1]$ that shifts all interior points leftwards.
Indeed, as illustrated in Figure~\ref{fig:shifted_curves},
it represents precisely the rescaling which, if applied to both axes of the unit square $[0, 1] \times [0, 1]$,
shifts the dotted (green) curve of Figure~\ref{fig:curves} to a straight
line joining the points $(0, 1)$ and $(1, 0)$.  Thus Conjecture \ref{conj:emp} implies the symmetric formulation of the Laplacian Spread Conjecture.
%
% The algebraic curve describing the dashed (green) curve can be reparameterized to a straight line from $(0,1)$ to $(1,0)$ via the rational function \[
% r_n(x) = \frac{x(x-2)(x-n+1)}{ x(x-2)(x-n+1) - (x-1)(x-n+2)(x-n)}.
% \]  Namely if we set $s=r_n(x)$ and $t=r_n(y)$, the dashed (green) curve becomes the line $s+t=1$. See Figure \ref{fig:shifted_curves}. In particular, $(r_n^{-1}(s),r_n^{-1}(t))$ for $0\leq s,t\leq1$ gives a parameterization of the dashed (green) curve of Figure \ref{fig:curves}.

Recall in the definitions of these families of graphs, $s$ is given as a parameter corresponding to the graph: $s=\frac{|C|}{n-2}$ for the thick-stemmed dandelions of the first kind.  The points along the straight dotted (green) line $s+t=1$ of Figure~\ref{fig:shifted_curves}
that correspond to the thick-stemmed dandelion families are equally spaced along the line
under the reparameterization.
For instance, for the case $n=7$ (as in the figure) these families lie on the points $(\frac15,\frac45)$, $(\frac25,\frac35)$, $(\frac35,\frac25)$, $(\frac45,\frac15)$. Every graph in the generalized bull family corresponds to the
midpoint $(1/2, 1/2)$ of the reparameterized line.

Corollary \ref{cor:max} and Conjecture \ref{conj:emp} give us a natural conjecture on the lower bound for the maximum of $\lambda_2(G)$ and $\lambda_2(G^c)$ that strengthens the $1-O(n^{-1/3})$ bound found in \cite{einollahzadeh2021proof}.

\begin{conjecture}
For any graph $G$ on $n$ vertices,
\[
\max\{\lambda_2(G),\lambda_2(G^c)\} \ge
\frac{n - \sqrt{(n - 2)^2 + 4}}{2}
> \frac{n-3}{n-2}.
\]
\end{conjecture}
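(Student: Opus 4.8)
The plan is to deduce this bound as a clean consequence of Conjecture~\ref{conj:emp} together with Corollary~\ref{cor:max}, so that the entire difficulty is pushed into Conjecture~\ref{conj:emp} itself. Write $M := \frac{n-\sqrt{(n-2)^2+4}}{2}$ and set $x = \lambda_2(G)$, $y = \lambda_2(G^c)$. First I would dispose of the easy regime: since $(n-2)^2 < (n-2)^2+4$ we have $n-2 < \sqrt{(n-2)^2+4}$ for $n>2$, hence $M < 1$. Therefore if $\max\{x,y\}\ge 1$ the desired inequality holds immediately, and we may assume both $x<1$ and $y<1$, which is exactly the hypothesis under which Conjecture~\ref{conj:emp} applies.

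In this remaining regime the argument is a monotonicity comparison against the corner point $(M,M)$. By Corollary~\ref{cor:max}, a generalized bull graph on $n$ vertices attains $\lambda_2(G)=\lambda_2(G^c)=M$, and by Theorem~\ref{thm:greenpoints} this pair satisfies the boundary identity, i.e.\ equality holds at $(M,M)$:
\[
M^2(2-M^2) = n(1-M)^2(n-2-2M).
\]
Now suppose for contradiction that both $x<M$ and $y<M$. The remark following Conjecture~\ref{conj:emp} records that on $0<x,y<1$ the left-hand side $xy(2-xy)$ is strictly increasing in each variable (its $x$-derivative is $2y(1-xy)>0$) while the right-hand side $n(1-x)(1-y)(n-2-x-y)$ is strictly decreasing in each variable. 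Hence
\[
xy(2-xy) < M^2(2-M^2) = n(1-M)^2(n-2-2M) < n(1-x)(1-y)(n-2-x-y),
\]
which contradicts the inequality asserted by Conjecture~\ref{conj:emp}. Therefore $\max\{x,y\}\ge M$, and the final strict inequality $M > \frac{n-3}{n-2}$ is supplied directly by Corollary~\ref{cor:max}.

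The main obstacle here is not in the reduction, which is short, but in the fact that the whole deduction is conditional: it rests entirely on the (as yet unproven) Conjecture~\ref{conj:emp}. The only genuine checks internal to this argument are the two monotonicity assertions and the equality at $(M,M)$, and both are already furnished --- the former by the derivative computation in the remark after Conjecture~\ref{conj:emp}, and the latter by combining Corollary~\ref{cor:max} with Theorem~\ref{thm:greenpoints}. One should also confirm that the displayed inequalities are genuinely strict whenever at least one of $x,y$ lies below $M$, which follows because the $x$-partial $2y(1-xy)$ is strictly positive on the open region; a small amount of care is needed only at the boundary cases $x=M$ or $y=M$, where $\max\{x,y\}\ge M$ holds trivially. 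Finally, one should record the harmless convention on small $n$: the generalized bull graph used to pin down equality at $(M,M)$ requires $n>4$, matching the hypothesis of the relevant statements.
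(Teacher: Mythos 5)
Your conditional derivation is correct and matches the paper's approach exactly: the paper offers no proof of this statement (it is stated as a conjecture that Corollary~\ref{cor:max} and Conjecture~\ref{conj:emp} ``give us''), and your monotonicity comparison against the corner point $(M,M)$ --- where equality in Conjecture~\ref{conj:emp} holds, either by Theorem~\ref{thm:greenpoints} applied to the generalized bull graphs of Corollary~\ref{cor:max} for $n>4$, or directly from the defining relation $M^2-nM+n-2=0$ for small $n$ --- is precisely the deduction the paper leaves implicit. The only caveat, which you already flag, is that the result remains conditional on the unproven Conjecture~\ref{conj:emp}, so this is a reduction rather than an unconditional proof.
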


% \noindent\textbf{(OPTIONAL) Remark.}
% There exist similar reparameterizing functions to straighten out the other curves of Figure~\ref{fig:curves} as well.
% The solid (red) curve is straightened out by the function
% \[
% s = \frac{2x^2 - nx - 2x}{nx - 2n}
% \]
% applied to both coordinates of the unit square.
% The parameter $s$ of the rescaling bears a linear relationship
% to the previously discussed quantity $k=|D(G)|/2$:
% as $k$ varies from $n/2$ down to $0$,
% $s$ varies linearly from $0$ to $1$.

% The dashed-dotted (blue) curve of Figure~\ref{fig:curves}, discussed in the following section,
% is similarly reparametrized to a straight line by the function
% \[
% s \ =\  \frac{x^2 - nx + x}{2x - n} \ =\  \frac{x(x - n + 1)}{x(x - n + 1) \ - \ (x - n)(x - 1)}.
% \]

% ?? In fact, the easiest way to demonstrate the ordering of the curves, for $n > 4$,
% might be by demonstrating
% inequalities on the curves that reparameterize them to straight lines.

% Show that equality and the values of $s$ and $t$ are obtained for the families.  State and prove a theorem that equality holds for the families.
%Show that this conjecture is equivalent to the symmetrized version of Conjecture~\ref{conj:bound} for $n=4$ and strictly implies it for $n>4$.

%Alternate form:

%\[
%n^2(x-1)(y-1)-n(x-1)(y-1)(x+y+2)+xy(xy-2)
%\]

\subsection{Conjectures on weighted graphs}
For a given labeling of $n$ graph vertices, the set of Laplacians of
simple graphs are the geometric extreme points
of a cube of dimension $\binom{n}{2}$ each point of which is
the Laplacian of a weighted graph $G$ in which all weights lie in the interval
$[0, 1]$.
For any such weighted graph $G$ the algebraic connectivity
$\lambda_2(G)$ and spectral radius $\lambda_n(G)$ can also be defined,
and it is a natural question whether the Spread Conjecture also holds
for such weighted graphs.
Investigations on families of such graphs and on small examples of random
graphs appear so far to satisfy the Spread Conjecture.

There is a natural operation of complementation on weighted graphs
that is obtained by replacing every edge weight $w_{ij}$ by
$w_{ij}^\prime = 1 - w_{ij}$, and the same identities hold
for the Laplacian eigenvalues of the complement of a weighted
graph as hold under complementation of simple graphs.
In particular, we have
\[
\lambda_2(G^c) = n - \lambda_n(G).
\]
The natural extension of the symmetric formulation of the Spread Conjecture to weighted graphs
can thus be stated as follows:

\begin{conjecture}
Let $G$ be a weighted graph with all weights in the range $[0, 1]$.
Then
\[
\lambda_2(G) + \lambda_2(G^c) \ge 1.
\]
\label{conj:weighted_linear}
\end{conjecture}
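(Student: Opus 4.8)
The plan is to recognize that Conjecture~\ref{conj:weighted_linear} is, via the identity $\lambda_2(G^c) = n - \lambda_n(G)$ recorded just above it, exactly the statement that the Laplacian spread $\lambda_n(G) - \lambda_2(G)$ is at most $n-1$ for every weighted graph in the cube, and then to reduce this bound over the entire cube to its extreme points, where it is already known. Writing $w = (w_{ij}) \in [0,1]^{\binom{n}{2}}$ for the weight vector and $L(w)$ for the associated Laplacian, the first observation is that $w \mapsto L(w)$ is linear, so every eigenvalue functional is the composition of a spectral function with a linear map; the strategy is to show the spread is a \emph{convex} functional of $w$ and then invoke the fact that a convex function on the weight cube is maximized at a vertex.

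The key steps, in order, are as follows. First, $\lambda_n(L) = \max_{\|x\|=1} x^\top L x$ is a supremum of functions linear in $w$, hence $\lambda_n(L(w))$ is convex in $w$. Second---and this is the step requiring care---because every weighted Laplacian in the family annihilates the all-ones vector $\mathbf{1}$, one may write $\lambda_2(L) = \min_{x \perp \mathbf{1},\, \|x\|=1} x^\top L x$ as a single minimum over the fixed subspace $\mathbf{1}^\perp$; this is an infimum of functions linear in $w$ and is therefore concave in $w$. Third, it follows that the spread $S(w) := \lambda_n(L(w)) - \lambda_2(L(w))$ is convex on the cube. Fourth, a convex function on a compact polytope attains its maximum at an extreme point, and the extreme points of $[0,1]^{\binom{n}{2}}$ are exactly the $\{0,1\}$-valued weightings, i.e.\ the Laplacians of simple graphs. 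Fifth, the Laplacian Spread Conjecture for simple graphs, proved in \cite{einollahzadeh2021proof}, gives $S \le n-1$ at each of these vertices, so $\max_w S(w) \le n-1$ and hence $S(w) \le n-1$ throughout the cube. Unwinding $\lambda_2(G^c) = n - \lambda_n(G)$ then yields $\lambda_2(G) + \lambda_2(G^c) \ge 1$.

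The main obstacle is justifying the concavity of $\lambda_2$ in the second step. For a generic symmetric matrix the second-smallest eigenvalue is a max--min quantity and is neither convex nor concave; the concavity here rests entirely on the fact that a single kernel direction $\mathbf{1}$ is shared by \emph{every} Laplacian in the cube, which collapses the Courant--Fischer max--min for $\lambda_2$ to a plain minimum over the fixed subspace $\mathbf{1}^\perp$. I would therefore make this hypothesis explicit, verifying that the weighted complementation $w_{ij} \mapsto 1 - w_{ij}$ preserves $L\mathbf{1} = 0$; indeed $L(G) + L(G^c) = nI - J$, so the shared kernel persists across the whole cube and $nI - J$ acts as $n$ times the identity on $\mathbf{1}^\perp$, which simultaneously re-derives $\lambda_2(G^c) = n - \lambda_n(G)$. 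A secondary point to confirm is the identification of the cube's extreme points with simple-graph Laplacians, so that the theorem of \cite{einollahzadeh2021proof} genuinely covers every vertex at which the convex maximum can occur.
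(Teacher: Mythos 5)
Your proposal is correct, and there is nothing in the paper to compare it against: the statement you were given is Conjecture~\ref{conj:weighted_linear}, which the paper leaves entirely open, supporting it only with numerical experiments and with the strictly stronger Conjecture~\ref{conj:weighted}. Your convexity reduction, written out carefully, would settle it. Checking the step you yourself flagged: for every $w \in [0,1]^{\binom{n}{2}}$ we have $x^\top L(w)x = \sum_{i<j} w_{ij}(x_i - x_j)^2 \ge 0$ and $L(w)\mathbf{1} = 0$, so $0$ is always the \emph{smallest} eigenvalue with the common eigenvector $\mathbf{1}$, and hence $\lambda_2(L(w)) = \min_{x\perp\mathbf{1},\,\|x\|=1} x^\top L(w)x$ uniformly on the cube; since each $x^\top L(w)x$ is linear in $w$, this minimum is concave (this is classical, going back to Fiedler's superadditivity $\lambda_2(L_1+L_2)\ge \lambda_2(L_1)+\lambda_2(L_2)$, and is the basis of convex optimization over edge weights). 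With $\lambda_n(L(w)) = \max_{\|x\|=1} x^\top L(w)x$ convex, the spread $S(w)$ is convex, and writing any $w$ as a convex combination of $\{0,1\}$-vertices, Jensen's inequality gives $S(w) \le \max_v S(v)$ over simple graphs, which is at most $n-1$ by \cite{einollahzadeh2021proof} (whose theorem covers all simple graphs on $n\ge2$ vertices, including disconnected ones). The unwinding via $\lambda_2(G^c) = n - \lambda_n(G)$, which the paper itself records for weighted complementation, is exactly right, and your verification that $w_{ij}\mapsto 1-w_{ij}$ stays in the cube and preserves the shared kernel is the correct hypothesis to make explicit.

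Two comparative remarks. First, your argument is inherently non-sharp in the interior of the cube, and this is consistent with the paper's own evidence: the conjectured extremal SE$(s)$ graphs satisfy $x + y = 1 + \frac{2xy}{n} > 1$, strictly above your bound, and they are interior points of the cube --- precisely where Jensen is slack. For the same reason the method cannot reach the stronger hyperbola bound of Conjecture~\ref{conj:weighted}: the functional $x + y - \frac{2xy}{n}$ is no longer a difference of a convex and a concave spectral function of $w$ in any usable way, and even its restriction to the cube's vertices (simple graphs) is an open strengthening of the simple-graph spread theorem. Second, since your proof resolves a conjecture the authors state as open, the one thing to double-check before claiming it is that no hidden hypothesis of \cite{einollahzadeh2021proof} excludes any $\{0,1\}$-vertex; it does not, since $\lambda_2(G)+\lambda_2(G^c)\ge1$ there holds for every simple graph on $n\ge2$ vertices.
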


Two other conjectured bounds have been introduced. The symmetric
formulation of Conjecture \ref{conj:emp} is represented
by the dashed (green) curve, and the symmetric formulation of Conjecture \ref{conj:bound} is represented by the solid (red) curve of Figure~\ref{fig:curves}.
It is natural to ask whether these inequalities might
be satisfied also for weighted graphs, but the answer
is that, for $n \ge 5$ vertices, there are weighted graphs
that fail these inequalities.
% It is natural to ask whether Conjecture~\ref{conj:weighted_linear},
% the weighted version of the symmetric formulation of the Spread Conjecture,
% might be improved upon by the weighted version of
% any of the other stronger inequalities conjectured
% above---and the answer is that it cannot.
Investigations have identified a family of weighted graphs that appear to be spectrally
extreme with respect to the Laplacian spread.
The graphs in this family depend on a parameter $s$,
and the values of $\lambda_2(G)$ and $\lambda_2(G^c)$
that are achieved as $s$ varies for this family
delineate what appears, in our investigations to date,
to be the strongest possible inequality
over weighted graphs.

\begin{definition}
  An \emph{SE$(s)$ graph} is a weighted graph $G$ with all edge weights
  in the interval $[0, 1]$ and with vertices $u$ and $v$ such that:
\begin{itemize}
    \item the edge weight between $u$ and any vertex in $V(G) \setminus \{u, v\}$ is $0$,
    \item the edge weight between $v$ and any
    vertex in $V(G) \setminus \{u, v\}$ is $1$, and
    \item the edge weight between $u$ and
    $v$ is $s \in (0, 1)$.
\end{itemize}
\end{definition}

Observe that the complement of an SE$(s)$ graph is an SE$(t)$
graph, for $s + t = 1$, with the roles of $u$ and $v$ exchanged.
The SE$(s)$ graphs are, in retrospect, a natural family of weighted graphs
to consider when trying to disconnect, as much as possible, both
the weighted graph and its complement.
A simple graph $G$ is most easily disconnected by having any one vertex of degree~$0$;
symmetrically, it is easiest to disconnect $G^c$
by having any one vertex with degree $n - 1$ in $G$.
A graph and its complement cannot be simultaneously disconnected,
of course, but they could be
if it were possible to have both a vertex $u$ of degree~$0$ and a vertex~$v$ of
degree $n - 1$.
The impediment to simultaneous disconnection comes from
the fact that $u$ and $v$ cannot agree on the number
of edges between them, a number that cannot be simultaneously
$0$ and $1$.
In the class of weighted graphs, $u$ and $v$ can compromise on this number
using any edge weight $w_{uv} = s$ in the range $[0, 1]$
(and edge weight $t = 1 - s$ in the complement),
giving a one-parameter family of graphs that are poorly
connected simultaneously with their complements.
In fact we will show the full $\mathrm{SE}(s)$ family consists of a great many such
one-parameter families, because the non-parametrized weight
of any edge involving neither $u$ nor $v$ is arbitrary
within its legal range of $[0, 1]$.

We first establish a relationship between the extremal
Laplacian eigenvalues, or equivalently between the algebraic
connectivity and complementary algebraic connectivity,
of weighted graphs within the SE$(s)$ family.

\begin{theorem}
Let $G$ be an SE$(s)$ graph on $n$ vertices,
with algebraic connectivity $x = \lambda_2(G)$ and complementary algebraic connectivity
$y = \lambda_2(G^c)$.
Then $x$ and $y$ satisfy the equation
\[
x + y -\frac{2xy}{n} = 1,
\]
and the positive values $s$ and $t = 1 - s$ satisfy the equations
\[
s = \frac{x - y + 1}{2}, \ \ t = \frac{y - x + 1}{2}.
\]
\end{theorem}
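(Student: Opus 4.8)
The plan is to exploit the block structure of the Laplacian of an SE$(s)$ graph and to reduce the two extreme nonzero eigenvalues to the eigenvalues of a fixed $3\times 3$ matrix whose entries do not depend on the arbitrary weights among the $n-2$ vertices other than $u$ and $v$. Order the vertices as $u$, then $v$, then the remaining set $W$ of size $n-2$, and let $L_W$ denote the Laplacian of the weighted subgraph induced on $W$. Using the defining weights (weight $s$ on the edge $uv$, weight $0$ from $u$ to each vertex of $W$, and weight $1$ from $v$ to each vertex of $W$), the Laplacian takes the block form
\[
L=\begin{bmatrix} s & -s & \mathbf{0}^{T}\\ -s & s+n-2 & -\mathbf{1}^{T}\\ \mathbf{0} & -\mathbf{1} & L_W+I_{n-2}\end{bmatrix},
\]
where the $I_{n-2}$ appears because each vertex of $W$ contributes one unit of degree through its weight-$1$ edge to $v$.

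First I would split $\R^n$ into two $L$-invariant subspaces. The three-dimensional space $S$ of vectors that are constant on $W$ is invariant, and since $L_W\mathbf{1}=0$ the action of $L$ on $S$ (in the basis $e_u,e_v,\mathbf{1}_W$) is given by the divisor matrix
\[
B=\begin{bmatrix} s & -s & 0\\ -s & s+n-2 & -(n-2)\\ 0 & -1 & 1\end{bmatrix},
\]
whose eigenvalues are therefore eigenvalues of $L$. The orthogonal complement $\{(0,0,\psi):\mathbf{1}^{T}\psi=0\}$ is also invariant (the row at $v$ annihilates every zero-sum $\psi$), and there $L$ acts as $L_W+I$ restricted to $\mathbf{1}^{\perp}$; it thus contributes the $n-3$ ``internal'' eigenvalues $\mu+1$ as $\mu$ ranges over the eigenvalues of $L_W$ orthogonal to $\mathbf{1}$. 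Because $L_W\preceq L_{K_{n-2}}$ in the Loewner order (all weights are at most $1$), every such $\mu$ lies in $[0,n-2]$, so all internal eigenvalues lie in $[1,n-1]$.

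Next I would pin down the two nonzero eigenvalues $x_B\le z_B$ of $B$. Since $B\mathbf{1}=0$, one eigenvalue is $0$, and Vieta's formulas give $x_B+z_B=\operatorname{tr}B=2s+n-1$ together with $x_Bz_B=sn$ (the latter from the sum of the $2\times2$ principal minors of $B$). Evaluating the monic quadratic $q(\lambda)=\lambda^{2}-(2s+n-1)\lambda+sn$ at $\lambda=1$ and $\lambda=n-1$ yields $q(1)=-(n-2)(1-s)<0$ and $q(n-1)=-s(n-2)<0$, so both $1$ and $n-1$ lie strictly between the roots; combined with $x_Bz_B=sn>0$ this forces $0<x_B<1\le n-1<z_B$. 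Hence $x_B$ is strictly smaller, and $z_B$ strictly larger, than every internal eigenvalue, so $x_B=\lambda_2(G)$ and $z_B=\lambda_n(G)$.

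Finally I would translate back using $y=\lambda_2(G^c)=n-\lambda_n(G)=n-z_B$, which holds for weighted graphs as noted above. Substituting $z_B=n-y$ into $x_B+z_B=2s+n-1$ gives $x-y=2s-1$, that is $s=\tfrac{x-y+1}{2}$ and $t=1-s=\tfrac{y-x+1}{2}$; substituting $z_B=n-y$ into $x_Bz_B=sn$, namely $x(n-y)=sn$, and eliminating $s$ yields $n(x+y-1)=2xy$, which is exactly the stated identity $x+y-\tfrac{2xy}{n}=1$. I expect the only genuinely nontrivial step to be the separation step: verifying that the internal eigenvalues remain within $[1,n-1]$ and that $q(1)$ and $q(n-1)$ are both negative, since this is precisely what guarantees that the two explicit roots of $B$ are the extreme nonzero eigenvalues of $L$ no matter how the internal weights are chosen. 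Once that separation is established, the remaining identities are routine algebra.
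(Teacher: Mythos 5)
Your proof is correct, and it takes a genuinely different route from the paper. The paper never works with an arbitrary SE$(s)$ graph directly: it computes the full characteristic polynomial of the Laplacian in the two extreme cases (all internal weights $0$, giving $z(z-1)^{n-3}\bigl[z^2-(2s+n-1)z+ns\bigr]$ via an explicit similarity transformation, and all internal weights $1$, handled by the complement symmetry $G_1(s)^c = G_0(t)$), and then infers that $\lambda_2$ and $\lambda_n$ are the same for every intermediate choice of internal weights, an inference that implicitly rests on eigenvalue monotonicity under Loewner-order perturbations since the values agree at both extremes. You instead handle all internal weightings at once: the three-dimensional invariant subspace spanned by $e_u$, $e_v$, $\mathbf{1}_W$ carries the quotient matrix $B$ (whose nonzero eigenvalues satisfy exactly the paper's quadratic $q(\lambda)=\lambda^2-(2s+n-1)\lambda+sn$, as your trace and principal-minor computation confirms), while on the orthogonal complement $L$ acts as $L_W+I$ on zero-sum vectors, and the bound $L_W\preceq L_{K_{n-2}}$ traps all internal eigenvalues in $[1,n-1]$; your sign evaluations $q(1)=-(n-2)(1-s)<0$ and $q(n-1)=-s(n-2)<0$ then separate the two roots of $q$ from that interval, identifying them as $\lambda_2$ and $\lambda_n$ unconditionally. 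What each approach buys: the paper's computation delivers the entire spectrum in the extreme cases (the $(z-1)^{n-3}$ factor) and reuses the complement symmetry elegantly, but leaves the interpolation step to the reader; your decomposition makes structurally transparent \emph{why} the extreme eigenvalues are independent of the internal weights, and it replaces the interpolation argument with a clean separation lemma, at the modest cost of not exhibiting the internal eigenvalues explicitly. The closing algebra (substituting $z_B = n-y$ into the Vieta identities to get $s=\tfrac{x-y+1}{2}$ and $x+y-\tfrac{2xy}{n}=1$) is essentially identical in both treatments.
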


\noindent\textbf{Remark.} This curve is the hyperbola centered at $(\frac{n}{2},\frac{n}{2})$ given by $\left(x-\frac{n}{2}\right)\left(y-\frac{n}{2}\right)=\frac{n}{4}(n-2)$ and represents the dotted dashed (blue) curve of Figure \ref{fig:curves}.  The lower left branch of the hyperbola approaches the line $x+y=1$ as $n\rightarrow \infty$.

\begin{proof}
We calculate $x$ and $y$ in the cases that 

\begin{itemize}
\item every edge not incident to $u$ nor $v$ has weight $0$ and
\item every edge not incident to $u$ nor $v$ has weight $1$.
\end{itemize}  

We show that $x$ and $y$ have the same value in either case, from which it follows that $x$ and $y$ have the same values for all other allowed edge weights.  

First we consider the case in which every edge not incident to $u$ nor $v$ has weight $0$. Call this graph $G_0(s)$.  Then taking $u$ to be vertex 1 and v to be vertex 2, its Laplacian matrix is 

\[
L_0= \begin{bmatrix}
s & -s & 0 & 0 & \dots & 0\\  -s & s+n-2 & -1 & -1 & \dots & -1\\ 0 & -1 & 1 & 0 & \dots & 0\\ 0 & -1 & 0 & 1 &  & 0\\
\vdots & \vdots & \vdots & \vdots & \ddots & 0\\ 0 & -1 & 0 & 0 & & 1
\end{bmatrix}
\]

In order to calculate the characteristic polynomial we introduce

\[ 
S=\begin{bmatrix} 1 & 0 & 0 & 0 & \cdots &0\\ 0& 1 & 0 & 0 & \cdots & 0 \\ 0 & 0 & 1 & 0 & \cdots & 0\\ 0 & 0 & -1 & 1 &  & 0 \\ 0 & 0 & \vdots & 0 & \ddots  & 0 \\ 0 & 0 & -1 & 0 & & 1
\end{bmatrix} 
\]

A calculation gives 

\[
S (L_0 - I) S^{-1}= \begin{bmatrix} s-1 & -s & 0 & 0 & \cdots & 0\\ -s & s+n-3 & 2 - n & -1 & \cdots & -1 \\ 0 & -1 & 0 & 0 & \cdots & 0 \\ 0 & 0 & 0 & 0 & \cdots & 0 \\ \vdots & \vdots & \vdots & \vdots & \cdots & \vdots \\  0 & 0 & 0 & 0 & & 0 \end{bmatrix}.
\]

A straightforward calculation shows that the characteristic polynomial, in $w$, of $L_0-I$ is 

\[ w^{n-3} [w^3 - (2s+n-4)w^2 + ((n-4)s+5-2n)w +(n-2)s-n+2].\]

Replacing $w$ by $z-1$ gives the characteristic polynomial of $L_0$ in $z$:

\[
 z(z-1)^{n-3}[z^2 -(2s+n-1)z + ns].
\]
Let $p(z)$ be the quadratic factor,
and evaluate it at the following four points:
\begin{itemize}
    \item $p(0) = ns > 0$,
    \item $p(1)= (n-2)(s-1) < 0$,
    \item $p(n-1) =-s(n-2) < 0$, and
    \item $p(n) = n(1-s) > 0$.
\end{itemize}

% $p(0)=ns, \ p(1)= (n-2)(s-1), \ p(n-1) =-s(n-2), \ p(n) = n(1-s)$.  \m
It follows that there is a root of $p$ between $0$ and $1$ and
another root of $p$ between $n-1$ and $n$,
which must be respectively $\lambda_2(G_0(s))$ and $\lambda_n(G_0(s))$.
Then
\begin{eqnarray*}
x+y-\frac{2xy}n &=& \lambda_2  + n - \lambda_n -(2/n) \lambda_2 (n-\lambda_n) \\
&=& n - (\lambda_2 + \lambda_n) +(2/n) \lambda_2 \lambda_n \\
&=& n - (2s+ n-1) +(2/n) ns \\
&=& n-2s - n +1 +2s \\
&=&1,
\end{eqnarray*}
which establishes the first claim for $G_0(s)$.
For the second claim, we have
\begin{eqnarray*}
\frac{x - y + 1}{2} &=& (1/2)(\lambda_2 - (n - \lambda_n) + 1) \\
&=&(1/2)(1 - n + (\lambda_2 + \lambda_n)) \\
&=&(1/2)(1 - n + (2s + n - 1)) \\
&=& s
\end{eqnarray*}
and
\[\frac{y - x + 1}{2} \ = \ 1 - \frac{x - y + 1}{2}
\ =\  1 - s
\ =\  t.
\]
It remains to establish the same claims for the graph $G_1(s)$
in which all weights involving neither $u$ nor $v$ are equal to $1$.
Observe however that the complement of $G_1(s)$ is
$G_0(t)$, and that the desired claims are symmetric
under the exchange of both the pair $(s, t)$ and the pair $(x, y)$.
\end{proof}

% This family of graphs, parametrized
% by the value $s \in (0, 1)$,
% satisfies the next conjectured
% bound with equality
% and is conjectured to give all cases of equality.

We conjecture that the name SE$(s)$ can be read as
\emph{spectrally extreme} or \emph{spread extreme},
namely that
these graphs give the extreme values
of the Laplacian spread for weighted graphs with non-negative weights
whose complements also have non-negative weights.

\begin{conjecture}\label{conj:weighted}
Let $G$ be a weighted graph on $n \ge 5$ vertices,
all of whose edge weights lie in the interval $[0, 1]$,
with $x = \lambda_2(G)$ and $y = \lambda_2(G^c)$.
%are at most $1$.
Then $x$ and $y$ satisfy the inequality
\[
x + y -\frac{2xy}{n} \ge 1,
\]
with equality in the case that $G$ is an SE$(s)$ graph.
% , in which case
% 
% there exist vertices $u$ and $v$ such that:
% \begin{itemize}
%     \item the edge weight between $u$ and any vertex in $V(G) \setminus \{u, v\}$ is $0$,
%     \item the edge weight between $v$ and any
%     vertex in $V(G) \setminus \{u, v\}$ is $1$, and
%     \item the edge weight between $u$ and
%     $v$ is $s \in (0, 1)$. (The edge weight between $u$ and $v$ in $G^c$
% is by definition $t = 1 - s$.)
% %
% %    \item a vertex $u$ of $G$ with one edge weight $s \in (0, 1)$ and all other edge weights equal to $0$, and
% %    \item a vertex $v$ of $G$ with one edge weight $t \in (0, 1)$ and all other edge weights equal to $1$.
% \end{itemize}
% 
% In the case of equality with $G$ an $\mathrm{SE}(s)$ graph,
% 
% the positive values $s$ and $t = 1 - s$
% are moreover given by
% \[
% s = \frac{x - y + 1}{2}, \ \ t = \frac{y - x + 1}{2}.
% \]
\label{conj:weighted}
\end{conjecture}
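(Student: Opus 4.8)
The plan is to recast the inequality entirely in terms of the extreme Laplacian eigenvalues of $G$ and then treat it as a constrained optimization over the weight cube $[0,1]^{\binom n2}$. Using $\lambda_2(G^c) = n - \lambda_n(G)$ and setting $x = \lambda_2(G)$, $y = n - \lambda_n(G)$, the claimed inequality $x + y - 2xy/n \ge 1$ is equivalent, after clearing denominators, to
\[
2\lambda_2\lambda_n - n(\lambda_2+\lambda_n) + n^2 - n \ge 0,
\]
or, completing the product, to $(\lambda_2 - n/2)(\lambda_n - n/2) \ge -\tfrac{n(n-2)}4$, which is exactly the statement that $(\lambda_2,\lambda_n)$ lies on the correct side of the hyperbola in the Remark. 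Denote the left-hand side by $F(w)$, a continuous function of the weights; since the cube is compact, $F$ attains a minimum, and it suffices to show this minimum is $0$ and is attained precisely on the $\mathrm{SE}(s)$ family. On that family the quadratic factor $z^2 - (2s+n-1)z + ns$ computed in the preceding theorem gives $\lambda_2+\lambda_n = 2s+n-1$ and $\lambda_2\lambda_n = ns$, hence $F = 0$.

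The key analytic inputs are that, as functions of the weights, $\lambda_2(w) = \min_{v\perp\mathbf 1,\,\|v\|=1} v^T L v$ is concave and $\lambda_n(w) = \max_{\|v\|=1} v^T L v$ is convex, being respectively a minimum and a maximum of functions linear in $w$; both are nondecreasing in each weight, with Hellmann--Feynman derivatives $\partial\lambda_2/\partial w_{ij} = (\xi_i-\xi_j)^2$ and $\partial\lambda_n/\partial w_{ij} = (\eta_i-\eta_j)^2$ at points of simple spectrum, where $\xi$ and $\eta$ are unit eigenvectors for $\lambda_2$ and $\lambda_n$ (both orthogonal to $\mathbf 1$, and to each other). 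Working in the regime $\lambda_2 < n/2 < \lambda_n$ (the remaining cases being easy to dispose of separately) and writing the first-order (KKT) conditions for a minimizer with $c = (n-2\lambda_2)/(2\lambda_n - n) \ge 0$, I would show that every edge satisfies $(\xi_i-\xi_j)^2 = c(\eta_i-\eta_j)^2$ when $0 < w_{ij} < 1$, while $w_{ij} = 0$ forces $(\xi_i-\xi_j)^2 \ge c(\eta_i-\eta_j)^2$ and $w_{ij}=1$ forces the reverse. Plotting each vertex as the point $(\xi_i,\eta_i)$, this says the weight on an edge is governed by a single threshold on the direction of its difference vector in the $(\xi,\eta)$-plane, and the remaining step is to argue that such a critical configuration must be spectrally equivalent to one with a single vertex $u$ whose degree is carried by a lone edge and a single vertex $v$ adjacent with full weight to everything else --- that is, an $\mathrm{SE}(s)$ graph.

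The hard part will be precisely this classification of critical points, for three reasons. First, $F$ contains the product $\lambda_2\lambda_n$ of a concave and a convex function and so is neither convex nor concave, so one cannot simply appeal to extreme-point optimization; indeed the relaxation in which $(\xi_i-\xi_j)^2$ and $(\eta_i-\eta_j)^2$ are treated as free nonnegative edge quantities summing to $n$ (their true totals, since $\sum_{i<j}(\xi_i-\xi_j)^2 = \xi^T(nI-J)\xi = n$) is \emph{false}: concentrating all Fiedler variation on one edge of weight $1$ and all top-eigenvector variation on a disjoint edge of weight $0$ gives $\lambda_2 = n$, $\lambda_n = 0$, and $F = -n < 0$. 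Thus any valid argument must genuinely exploit that $\xi$ and $\eta$ are global eigenvectors, not merely the eigenvalue identities and trace constraints. Second, the derivative formulas and the concavity/convexity must be handled through subdifferentials wherever $\lambda_2$ or $\lambda_n$ is a repeated eigenvalue, which is exactly where the structured extremizers are likely to sit.

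Finally, establishing the equality characterization --- that the minimum is attained \emph{only} on $\mathrm{SE}(s)$ graphs, including the freedom of arbitrary weights on edges avoiding $u$ and $v$ --- requires showing the zero level set of $F$ coincides with this family and no other. The preceding theorem's observation that $G_0(s)$ and $G_1(s)$ are spectrally identical, together with the concavity of $\lambda_2$ and convexity of $\lambda_n$, suggests that the entire sub-cube of edges missing $u$ and $v$ is a common level set of both $\lambda_2$ and $\lambda_n$; verifying this would let one collapse the problem to the two-vertex data at $(u,v)$ and finish by the explicit quadratic $z^2 - (2s+n-1)z + ns$. The hypothesis $n \ge 5$ would then be checked at this final, low-dimensional step.
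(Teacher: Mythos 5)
There is a fundamental mismatch here: the statement you were asked to prove is one of the paper's \emph{new conjectures} (Conjecture~\ref{conj:weighted}), and the paper contains no proof of it. The only thing the paper establishes is the equality case, via the theorem immediately preceding the conjecture: for SE$(s)$ graphs, a squeeze between $G_0(s)$ and $G_1(s)$ (using monotonicity of $\lambda_2$ and $\lambda_n$ in the edge weights) pins down the extreme eigenvalues as the roots of $z^2-(2s+n-1)z+ns$, which yields $x+y-\frac{2xy}{n}=1$. Your verification of that equality case is correct, and your algebraic reformulation of the inequality as $(\lambda_2-\frac n2)(\lambda_n-\frac n2)\ge-\frac{n(n-2)}{4}$, matching the hyperbola in the paper's remark, is also correct, as are the Hellmann--Feynman derivatives $\partial\lambda/\partial w_{ij}=(v_i-v_j)^2$, the concavity/convexity of $\lambda_2$/$\lambda_n$, and the normalization $\sum_{i<j}(\xi_i-\xi_j)^2=n$.

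But your proposal does not prove the statement, and you essentially say so yourself: the decisive step --- showing that the minimum of $F(w)=2\lambda_2\lambda_n-n(\lambda_2+\lambda_n)+n^2-n$ over the weight cube is zero and is attained only on the SE$(s)$ family --- is precisely the open content of the conjecture, and it is left as a program rather than an argument. Concretely: (i) your KKT conditions are only \emph{necessary}; since $F$ couples a concave function ($\lambda_2$) and a convex one ($\lambda_n$) through a product, critical points need not be minima, and the set of configurations whose Fiedler vector $\xi$ and top eigenvector $\eta$ satisfy the threshold relation $(\xi_i-\xi_j)^2\lessgtr c(\eta_i-\eta_j)^2$ edge by edge could be large --- you supply no mechanism forcing such a configuration to collapse to a vertex $u$ of weighted degree $s$ and a vertex $v$ of full degree; (ii) the subdifferential analysis at repeated extreme eigenvalues (e.g.\ near disconnection, where $\lambda_2$ becomes multiple) is flagged but not carried out, and this is exactly where boundary minimizers would sit; (iii) your own (correct and useful) observation that the natural relaxation treating $(\xi_i-\xi_j)^2$ and $(\eta_i-\eta_j)^2$ as free edge quantities is \emph{false} shows that any valid argument must exploit global spectral structure, but no such argument is given. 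So what you have is a sensible research plan whose honest hard core coincides with the conjecture itself; since the paper likewise offers no proof, there is no paper argument for your sketch to be measured against beyond the SE$(s)$ equality theorem, which you reproduce correctly.
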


\noindent\textbf{Remark.} Our observations lead us to believe further that if we impose the strict inequality $\lambda_2(G)<1$ and $\lambda_2(G^c)<1$, then these are the only graphs achieving equality.  Strict inequality is necessary since we know there are graphs not of these families that lie on the corners $(0,1)$ and $(1,0)$.

Observe that Conjecture~\ref{conj:weighted} strictly implies
Conjecture~\ref{conj:weighted_linear}, and that
for any given pair $(x, y)$ in the range $0 < x, y < 1$ that
satisfies Conjecture~\ref{conj:weighted}, all entrywise greater pairs
$x^\prime \ge x$, $y^\prime \ge y$ in the allowed range also satisfy
the inequality, because within this range
the left side of the inequality is increasing in both $x$ and $y$,
as can be seen for $x$ by
\[
\frac{\partial}{\partial x}\left( x + y -\frac{2xy}{n}\right) = 1 - (2/n)y
\]
and for $y$ symmetrically.\\

% \begin{theorem}
% An $\mathrm{SE}(s)$ weighted graph satisfies Conjecture~\ref{conj:weighted} with equality,
% \[
% x + y -\frac{2xy}{n} = 1,
% \]
% and the positive values $s$ and $t = 1 - s$ are moreover given by
% \[
% s = \frac{x - y + 1}{2}, \ \ t = \frac{y - x + 1}{2}.
% \]
% \end{theorem}

\noindent\textbf{Remark.} We have observed that for $n>4$, the dashed (green) curve lies strictly above the solid (red) curve, which in turn lies strictly above the dashed dotted (blue) curve in Figure \ref{fig:curves}.  Furthermore, we remark that all the curves  coincide when $n=4$, simplifying to the hyperbola $(x-2)(y-2)=2$.    For $n=2$ and $n=3$ all possible connected graphs have diameter 2 or less, thus any of these graphs will lie on or above the corners in Figure \ref{fig:curves}.

\section{Appendix}

\begin{lemma}\label{lem:decr}
The expression in (\ref{eq:conj_bound}) is decreasing in $k$.
\end{lemma}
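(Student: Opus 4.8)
The plan is to show that $\f_n(x)$ is decreasing in $x$ on the relevant domain $[0, n/2]$ by direct differentiation, exploiting the fact that $\f_n$ is expressed as a root of a quadratic. First I would write $\f_n(x) = \frac{1}{2}\left(A - \sqrt{A^2 - 4B}\right)$ where $A = A(x) = n - x + 1$ and $B = B(x) = n - 2x$, noting $A' = -1$ and $B' = -2$. Differentiating gives
\[
\f_n'(x) = \frac{1}{2}\left(A' - \frac{2AA' - 4B'}{2\sqrt{A^2 - 4B}}\right) = \frac{1}{2}\left(A' - \frac{AA' - 2B'}{\sqrt{A^2 - 4B}}\right).
\]
Substituting $A' = -1$ and $B' = -2$ yields $AA' - 2B' = -A + 4 = -(n - x + 1) + 4 = -(n - x - 3)$, so
\[
\f_n'(x) = \frac{1}{2}\left(-1 + \frac{n - x - 3}{\sqrt{A^2 - 4B}}\right).
\]
The goal is then to show this is nonpositive, i.e. $\sqrt{A^2 - 4B} \geq n - x - 3$, on the domain.

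The cleanest approach is to reduce the desired inequality $\f_n'(x) \le 0$ to a polynomial comparison. If $n - x - 3 \le 0$ the inequality is immediate since the square root is nonnegative; so I would assume $n - x - 3 > 0$ and square both sides, reducing to showing $A^2 - 4B \geq (n - x - 3)^2$. Expanding, $A^2 - 4B = (n - x + 1)^2 - 4(n - 2x)$, and subtracting $(n-x-3)^2$ I expect the difference to collapse to a simple expression linear in the variables that is manifestly nonnegative on $[0, n/2]$. A quick check suggests that $(n-x+1)^2 - (n-x-3)^2 = 4\big(2(n-x) - 2\big) = 8(n - x - 1)$, so the difference is $8(n - x - 1) - 4(n - 2x) = 4n - 4x - 8 + 8x - 8$, wait, recomputing: $8(n-x-1) - 4(n - 2x) = 8n - 8x - 8 - 4n + 8x = 4n - 8 = 4(n-2)$, which is nonnegative for $n \geq 2$. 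This is the heart of the argument and it turns out to be a clean constant.

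With that computation in hand, the proof is essentially complete: for $n \geq 2$ the quantity under the radical dominates $(n-x-3)^2$, hence $\f_n'(x) \le 0$ throughout $[0, n/2]$, so $\f_n$ is decreasing. I would also briefly confirm that the radicand $A^2 - 4B = (n-x+1)^2 - 4(n-2x)$ is nonnegative on the domain so that $\f_n$ is real and the differentiation is valid, as the surrounding text already asserts; this follows from the same discriminant being a perfect-square-dominated expression, or can be verified directly at the endpoints together with convexity of the radicand. I do not anticipate a genuine obstacle here — the only mild care needed is the sign split on $n - x - 3$ before squaring, and confirming the boundary behavior of the square root at any point where $A^2 - 4B$ might vanish, but since the final difference $4(n-2)$ is a strictly positive constant for $n > 2$, the monotonicity is in fact strict away from degenerate small cases.
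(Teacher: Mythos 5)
Your proposal is correct and follows essentially the same route as the paper: differentiate $\f_n$, reduce to comparing $\sqrt{(n-x+1)^2-4(n-2x)}$ with $n-x-3$, and observe that the radicand exceeds $(n-x-3)^2$ by exactly the constant $4(n-2)$ --- the paper's proof of Lemma~\ref{lem:decr} performs precisely this completion of the square, writing the radicand as $(n-k-3)^2+(4n-8)$. Your explicit sign split on $n-x-3$ before squaring is a minor point of extra care that the paper's one-line estimate leaves implicit, but the argument is the same.
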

\begin{proof}
Define $\f_n(k) = \frac12\left(n-k+1 -\sqrt{(n-k+1)^2-4(n-2k)}\right)$.  We will prove that $\f_n'(k)$ is negative.
For $n\geq2$ we have 
\begin{align*}
    \f_n'(k)&=\frac12\left(-1 - \frac{2(n-k+1)(-1)+8}{2\sqrt{(n-k+1)^2-4(n-2k)}}\right)\\
    &=\frac12\left(-1+\frac{n-k-3}{\sqrt{n^2-2nk-2n+6k+k^2+1}}\right)\\
    &=\frac12\left(-1+\frac{n-k-3}{\sqrt{n^2-2nk-6n+6k+k^2+9 +(4n-8)}}\right)\\
    &=\frac12\left(-1+\frac{n-k-3}{\sqrt{(n-k-3)^2+(4n-8)}}\right)\\
    &<\frac12\left( -1 + \frac{n-k-3}{n-k-3}\right)=0.
\end{align*}
\end{proof}
\begin{lemma}\label{lem:concavedown}
The expression for $g_n(k)$ in \eqref{eq:gnkcd} is concave down on the interval $k \in\left(0,\frac{n}{2}\right)$.
\end{lemma}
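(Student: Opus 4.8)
The plan is to show that $g_n''(k) < 0$ throughout the open interval $(0, n/2)$. Recall from \eqref{eq:gnkcd} that
\[
g_n(k) = \frac{3n/2 - \sqrt{A(k)} - \sqrt{B(k)}}{2},
\]
where I abbreviate $A(k) = (n-k+1)^2 - 4(n-2k)$ and $B(k) = \left(\frac{n+2k+2}{2}\right)^2 - 8k$. Since the constant term $3n/2$ and the factor $1/2$ contribute nothing to the second derivative, it suffices to prove that $-\sqrt{A} - \sqrt{B}$ is concave down, i.e.\ that both $\sqrt{A(k)}$ and $\sqrt{B(k)}$ are \emph{convex} in $k$ on the relevant interval. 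Thus I reduce the single concavity claim to two independent convexity claims, one for each square-root term.

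First I would expand $A$ and $B$ into explicit quadratics in $k$. A direct computation gives $A(k) = k^2 - (2n-6)k + (n^2 - 6n + 1)$ (as already seen in the completion-of-square step of Lemma~\ref{lem:decr}, where $A(k) = (n-k-3)^2 + (4n-8)$), and similarly $B(k)$ expands to a quadratic in $k$ of the form $k^2 + \beta k + \gamma$ with leading coefficient $1$. For a function of the form $h(k) = \sqrt{k^2 + bk + c}$ with positive leading coefficient, the standard derivative computation yields
\[
h''(k) = \frac{4c - b^2}{4\,(k^2 + bk + c)^{3/2}},
\]
so $h$ is convex exactly when the discriminant-type quantity $4c - b^2$ is nonnegative (equivalently, when the quadratic under the root has no real roots, i.e.\ the square root traces one branch of a hyperbola, which is convex). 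For $\sqrt{A}$ the relevant quantity is $4(n^2 - 6n + 1) - (2n-6)^2 = 16n - 32 = 16(n-2) > 0$ for $n \geq 3$, matching the $(4n-8)$ offset from the completed square in Lemma~\ref{lem:decr}; this confirms convexity of $\sqrt{A}$. I would carry out the analogous discriminant check for $B$ and confirm it is positive on the parameter range of interest.

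The key steps in order are therefore: (i) expand $A(k)$ and $B(k)$ as monic quadratics in $k$; (ii) invoke the general second-derivative formula for $\sqrt{k^2 + bk + c}$ to reduce convexity of each square root to positivity of $4c - b^2$; (iii) verify $4c - b^2 > 0$ for each of $A$ and $B$; and (iv) conclude that $g_n = \text{const} - \frac12\sqrt{A} - \frac12\sqrt{B}$ is concave down as the negative sum of two convex functions. The main obstacle I anticipate is purely bookkeeping in step (iii): the expansion of $B(k) = \left(\frac{n+2k+2}{2}\right)^2 - 8k$ produces fractional coefficients, and I must confirm that $4c - b^2$ stays positive for all $n$ where the conjecture is nontrivial (i.e.\ $n \geq 3$ or wherever $k = n/2 - 1$ lands in range). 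One subtlety to check is that $A(k)$ and $B(k)$ remain strictly positive on $(0, n/2)$ so that the square roots are differentiable and the denominators $(k^2+bk+c)^{3/2}$ are nonzero; positivity of $A$ follows immediately from the completed-square form $A = (n-k-3)^2 + 4(n-2)$, and I would record the analogous bound for $B$. If either discriminant check failed for small $n$, the fallback would be to treat those finitely many cases by direct evaluation, but I expect the clean $16(n-2)$ pattern to persist and make the general argument go through uniformly.
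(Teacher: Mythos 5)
Your approach is correct and essentially the same as the paper's: the paper likewise computes $g_n''$ directly, obtaining $g_n''(k) = -(2n-4)\bigl(A^{3/2}+B^{3/2}\bigr)/\bigl(A^{3/2}B^{3/2}\bigr)$ with $A=(n-k-1)^2+4k$ and $B=(k+n/2+1)^2-8k$ both positive, which is precisely your observation that each square root is convex with common discriminant quantity $4c-b^2=16(n-2)>0$. One arithmetic slip to repair: the constant term of $A(k)$ is $(n-1)^2=n^2-2n+1$ (from the completed square $(n-k-3)^2+(4n-8)$ the constant is $n^2-6n+9+4n-8$), not $n^2-6n+1$, and only with that correction does your displayed identity $4c-b^2=4(n-1)^2-(2n-6)^2=16(n-2)$ hold; the check you deferred for $B(k)=k^2+(n-6)k+(n+2)^2/4$ gives $4c-b^2=(n+2)^2-(n-6)^2=16(n-2)>0$ as well, so step (iii) goes through uniformly for $n\ge 3$ with no small-$n$ fallback needed.
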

\begin{proof}
We first observe that 
\[g_n(k) = \frac{1}{2}\left(\frac{3n}{2} - \sqrt{(n-k-1)^2+4k} - \sqrt{(k + n/2 + 1)^2 - 8k}\right).\]
Moreover, 
\[ g_n'(k) = -\frac{2k + n - 6}{4\sqrt{(k + n/2 + 1)^2 - 8k}} - \frac{2k - 2n + 6}{4\sqrt{(n - k - 1)^2} +4k},\]
and 
\[ g_n''(k) =-\frac{(2n-4)(((n-k-1)^2+4k)^{3/2} + ((k + n/2 + 1)^2 - 8k)^{3/2})}{((n-k-1)^2+4k)^{3/2}((k + n/2 + 1)^2 - 8k)^{3/2}}.\]
By prior arguments it is straightforward to see that for $k\in (0,n/2)$, $(k + n/2 + 1)^2 - 8k> 0$, $(n-k-1)^2+4k > 0$, and $2n-4> 0$ for $n\geq3$.  Thus this term is negative, and the function is concave down on the given interval.
\end{proof}

\bibliography{biblio}{}
\bibliographystyle{plain}
\end{document}